\documentclass[10pt]{amsart}
\usepackage[letterpaper,margin=1in]{geometry}
\usepackage{url}
\usepackage{amssymb,amsfonts,enumitem,mathtools,mathrsfs,hyperref}
\usepackage{algorithm,algpseudocode}
\usepackage{graphicx,caption,subcaption} 
\usepackage{tikz}
\usepackage[utf8]{inputenc}
\usepackage[normalem]{ulem}
\usepackage{multirow, booktabs}
\theoremstyle{plain}

\newtheorem{assumption}{Assumption}[section]
\newtheorem{proposition}{Proposition}[section]
\newtheorem{theorem}{Theorem}[section]
\newtheorem{definition}[theorem]{Definition}

\newtheorem{lemma}[theorem]{Lemma}
\newtheorem{remark}[theorem]{Remark}
\newtheorem{rulenv}{Rule}[section]
\numberwithin{equation}{section}

\makeatletter
\AtBeginDocument{\mathchardef\xd@origx=\mathcode`\x \mathcode`\x="8000 }
\begingroup
  \lccode`\~=`\x
  \lowercase{\endgroup\def~}{\bgroup\@ifnextchar^\xd@check{\xd@origx\egroup}}
\def\xd@deltatok{\delta}
\def\xd@check^#1{%
  \def\xd@arg{#1}%
  \ifx\xd@arg\xd@deltatok
    w\egroup^{\delta}%
  \else
    \xd@origx\egroup^{#1}%
  \fi}
\makeatother

\begin{document}
\title[D-STAR method]{Data-driven stochastic asymptotic regularization for ill-posed inverse problems with a posteriori and heuristic stopping}
\author[S. Soni]{Simran Soni}
\author[H. Bajpai]{Harshit Bajpai}
\author[A. K. Giri]{Ankik Kumar Giri}
\thanks{
\hspace{-0.64cm} Department of Mathematics, Indian Institute of Technology Roorkee, Roorkee, Uttarakhand 247667, India.\\[3pt]
E-mail: \texttt{simran\_s@ma.iitr.ac.in} (Simran Soni), \texttt{harshit\_b@ma.iitr.ac.in} (Harshit Bajpai), \texttt{ankik.giri@ma.iitr.ac.in} (Ankik Kumar Giri).\\[3pt]
 Funding: S. Soni and H. Bajpai are supported by the Ministry of Education, Government of India, and
 the Indian Institute of Technology Roorkee.
}
\maketitle
\begin{quote}
{{\em \bf Abstract.}}
A novel Data-driven STochastic Asymptotic Regularization (\texttt{D-STAR}) method is proposed for solving linear and nonlinear ill-posed inverse problems, which incorporates a data-driven drift and a stochastic diffusion mechanism. Both known and unknown noise level cases are accommodated by the method, thereby making the framework applicable to a broader range of practical inverse problems.
For known noise level, a modified discrepancy principle is employed, whereas for unknown noise level, a heuristic stopping rule based solely on noisy data is developed. For each stopping rule, an appropriate diffusion term is defined and the well-posedness of the associated stochastic differential equation is demonstrated. The finite termination of \texttt{D-STAR} method under both stopping rules is established and its convergence to the exact solution in the noise-free setting is proved. For the noisy data, stability and the regularization property in the mean-square sense are further established. In addition, the uncertainty quantification of the proposed method and its comparison with the Stochastic Asymptotic Regularization (\texttt{SAR}) method under both rules are provided. Furthermore, it is shown that local minima arising in the Data-Driven Iteratively Regularized Landweber (\texttt{DDIRLI}) method may be escaped. Finally, detailed numerical experiments, including comparisons with existing \texttt{DDIRLI} and \texttt{SAR} methods, are presented on X-ray computed tomography and nonlinear schlieren tomography for both stopping rules, thereby demonstrating the performance of \texttt{D-STAR}.
\end{quote}
\vspace{.3cm}

\noindent

\begin{description}
\item[\textbf{Keywords}] nonlinear inverse problems, uncertainty quantification, asymptotic regularization, stochastic algorithm, data-driven regularization. \vspace{.1cm}
\item[\textbf{MSC codes}] 47A52, 65J20, 65J22, 82B31.
\end{description}


\section{Introduction}
\noindent In this article, we solve the following deterministic nonlinear ill-posed operator equation
\begin{align}
    \mathcal{F}(w)=v, \label{IP}
\end{align}
where $\mathcal{F}: \mathcal{D}(\mathcal{F}) \subset \mathcal{W} \to \mathcal{V}$ is a nonlinear operator between separable Hilbert spaces $\mathcal{W}$ and $\mathcal{V}$ with the usual inner products $\langle \cdot , \cdot \rangle$ and norms $\|\cdot\|$, respectively.
Such inverse problems have various applications including medical tomography~\cite{barrett1992image}, estimation of reaction rates in biological systems~\cite{shukla2020physics} and  parameter identification problem in PDEs~\cite{hanke1995convergence}.
In general, the solution to  \eqref{IP} may not exists due to its ill-posedness or even if exists, it may fail to be unique or depend continuously on perturbations of $v$. Although in this work, we assume that  \eqref{IP} admits a solution $w^\dagger$, which may not be unique. Our primary interest is in problems of the form \eqref{IP} where the solution $w^\dagger$ does not depend continuously on the data $v$. These problems require regularization methods to obtain reliable approximations of $w^\dagger$. In practical applications, only noisy measurement $v^\delta$ of the exact data $v$ is available. When the noise level $\delta>0$ is known and the noisy data $v^\delta$ satisfies 
\begin{align}\label{noise:lev}
    \|v^\delta -v\| \leq \delta,
\end{align}
several variational \cite{scherzer2009variational} and iterative regularization methods \cite{kaltenbacher2008iterative} have been proposed over the past decades. 
Variational methods recover the solution by solving an optimization problem, while iterative methods generate successive approximations and exhibit semi-convergence behavior due to ill-posedness of the problem. Thus, an appropriate stopping criterion is necessary to achieve a reliable solution. When the noise level $\delta$ is known, discrepancy principle~\cite{morozov1966solution} guarantees the regularization. An iterative Landweber iteration in this setting is  one of the most prominent and extensively studied method, which is given by 
\begin{align}
    w_{k+1}^\delta = w_k^\delta -\Delta t\mathcal{F}'(w_k^\delta)^* [\mathcal{F}(w_k^\delta)-v^\delta], \quad k \in \mathbb N,  \label{LI}
\end{align}
where $w_0^\delta = \tilde{w}$ is an initial value, $\Delta t$ is step size
and $\mathcal{F}'(w_k^\delta)^*$ represents the adjoint of Fr\'echet derivative of $\mathcal{F}(w_k^\delta)$.
The continuous counterpart of \eqref{LI}, when $\Delta t$ tends to zero is known as the asymptotical regularization or
Showalter\textquotesingle{s} method \cite{tautenhahn1994asymptotical}, given by
\begin{align}
    dx^\delta(t) = \mathcal{F}'(x^\delta(t))^* [v^\delta-\mathcal{F}(x^\delta(t))]dt, \label{AR}
\end{align}
where $t \geq 0$ is an artificial scalar time. 
It was shown in \cite{rieder2005runge,zhao2020convergence} that Runge--Kutta integrators preserve the regularization properties of \eqref{AR} in its numerical realization. In \cite{boct2022convergence,gong2020new,zhang2019fractional,zhang2020second}, the authors proposed the fractional-order and second-order gradient flows to accelerate the convergence of \eqref{AR}. 
Moreover, in order to preserve special structures of solution for nonlinear inverse problems such as sparsity and discontinuities, an  asymptotical regularization with convex constraints is studied in \cite{zhong2022asymptotical}. Furthermore, stochastic extensions have also been considered, in particular, asymptotic regularization for linear inverse problems with additive white noise is analyzed in \cite{lu2021asymptotical}.

It is worth to emphasize that Landweber method \eqref{LI} has local convergence for nonlinear ill-posed problems and it may become trapped in the local minima for an optimization problem, $\min \|\mathcal{F}(w)-v^\delta\|^2$. Therefore, to escape such local minima and to achieve global convergence, stochastic perturbations have been incorporated into \eqref{LI}. The resulting stochastic  method is given by
\begin{align}
    w_{k+1}^\delta = w_k^\delta -\Delta t \mathcal{F}'(w_k^\delta)^* [\mathcal{F}(w_k^\delta)-v^\delta] + g_k \Delta B_k, \quad k \in \mathbb N,  \label{SM}
\end{align}
where $x^\delta_0=\tilde{w}$ is a deterministic initial value, $\{\Delta B_k\}_{k \geq 1}$ denotes a sequence of random variables with a specified distribution, and $g_k$ determines the amplitude of the stochastic perturbation. According to the numerical optimization \cite{chiang1987diffusion,engquist2022algebraically,gelfand1991recursive,kushner1987asymptotic}, $g_k$ should gradually vanish to guarantee the convergence. However, if $g_k$ decays too rapidly, \eqref{SM} may converge to a local minima rather than the desired global minima. Therefore, selecting an appropriate sequence $g_k$ is crucial. The continuous analog to \eqref{SM} is given by
\begin{align}
    dx^\delta(t) = \mathcal{F}'(x^\delta(t))^* [v^\delta-\mathcal{F}(x^\delta(t))]dt + g(t)dB(t), \label{SAR}
\end{align}
where $t \geq 0$, $g(t) \in L^\infty(\mathbb{R}_+)$ and $\{B(t)\}_{t \geq 0}$ is a $\mathcal W$-valued  $\mathcal{Q}$-Wiener process. Equation \eqref{SAR} is known as the {Stochastic Asymptotic Regularization} (\texttt{SAR}) method.

The convergence of \texttt{SAR} method \eqref{SAR} has been studied under various stopping rules. For linear ill-posed inverse problems with known noise level, both \textit{a priori} and \textit{a posteriori} stopping rules have been considered in \cite{zhang2023stochastic}, whereas a random discrepancy principle is introduced in \cite{zhang2024stochastic}. In the nonlinear setting with known noise level, the convergence results for \texttt{SAR} method using a modified Morozov discrepancy principle is established in \cite{long2025stochastic}. However, to the best of our knowledge, the convergence of \texttt{SAR} method under stopping rules that do not require the knowledge of noise level remains unexplored. It is important to highlight that
\texttt{SAR} method possesses four main advantages over deterministic regularization methods. By incorporating stochastic perturbations, \texttt{SAR} helps to escape from local minima by selecting an optimal path. Additionally, it  provides uncertainty quantification of approximate solution of \eqref{IP}. Furthermore, \texttt{SAR} can uncover hidden information in many real-world problems that may remain inaccessible due to incomplete mathematical models and large-scale observational noise, see \cite[subsection 5.2]{zhang2023stochastic} for an example in biosensor tomography. Lastly \texttt{SAR} can identify multiple solutions by clustering the obtained approximate solution.  

It is worth emphasizing that \texttt{SAR} method is fundamentally distinct from the Stochastic Gradient Descent (\texttt{SGD}) method \cite{robbins1951stochastic}, which is widely used in  machine learning, statistics and computational mathematics. For comprehensive convergence analysis of \texttt{SGD} methods to solve ill-posed inverse problems, we refer the reader to \cite{jahn2020discrepancy,jin2020convergence,jin2021saturation,lu2022stochastic,zhang2024stochastic}. We note \texttt{SGD} is inherently a discrete iterative method while \texttt{SAR} is formulated as a continuous-time stochastic process. Further, \texttt{SGD} can be view as a randomized variant of the classical Landweber iteration but it differs from \texttt{SAR} in both the origin of randomness and their computational cost.
In \texttt{SGD}, randomness is introduced by randomly selecting a data sample or row index at each iteration, thereby avoiding the computation of full gradient and significantly reducing the computational cost.
On the other hand, \texttt{SAR} incorporates randomness through the stochastic perturbation $g(t)dB(t)$ which is introduced to escape from the local minima and quantifying uncertainties rather than computational efficiency.

In the discrete setting, when \textit{a priori} information $w^{(0)}$  about the true solution is available, then such information can be effectively incorporated to improve the stability. Motivated by this idea, by adding a damping term to \eqref{LI}, the Iteratively Regularized Landweber Iteration (IRLI) \cite{scherzer1998modified} is proposed as
\begin{align}
    w_{k+1}^\delta = w_k^\delta -\Delta t\mathcal{F}'(w_k^\delta)^* [\mathcal{F}(w_k^\delta)-v^\delta] -\lambda_k^\delta (w_k^\delta-w^{(0)}), \quad k \in \mathbb N, \label{IRLI}
\end{align}
where $\lambda_k^\delta \geq 0$ is a damping parameter.
It has been shown that the inclusion of the damping term $-\lambda_k^\delta (w_k^\delta-w^{(0)})$  stabilizes the method without needing additional assumptions, see \cite{kaltenbacher2008iterative}. Building on \texttt{IRLI} method \eqref{IRLI}, the Data-Driven  Iteratively Regularized Landweber Iteration  (\texttt{DDIRLI}) is proposed in \cite{aspri2020data}.
The main aim of \texttt{DDIRLI} method is to include a damping factor trained from a prior data $(w_i,\mathcal{F}(w_i))_{1 \leq i \leq m}$. To be more specific, define a bounded linear operator $\hat{A}$ between Hilbert spaces $\mathcal{W}$ and $\mathcal{V}$, i.e. $\hat{A} \in \mathcal{B}(\mathcal{W},\mathcal{V})$ such that $\hat{A}(w_i):= \mathcal{F}(w_i)$. Then, the resulting method is given by
\begin{align}
    w_{k+1}^\delta = w_k^\delta -\Delta t\mathcal{F}'(w_k^\delta)^* [\mathcal{F}(w_k^\delta)-v^\delta] - \lambda_k^\delta \hat{A}'(w_k^\delta)^* \big[\hat{A}(w_k^\delta)-v^\delta \big], \quad k \in \mathbb N, \label{DDLI}
\end{align}
where $\hat{A}'(w_k^\delta)^*$ represents the adjoint of Fr\'echet derivative of $\hat{A}(w_k^\delta)$. The additional data-driven term incorporates information learned from the training data. At the beginning of the iterations, this term has a greater influence on the reconstruction, while its effect gradually decreases as the iteration proceeds. Consequently, the reconstruction is increasingly governed by the operator $\mathcal{F}$.
 Moreover, when the desired solution is well approximated by the training set, \texttt{DDIRLI} method \eqref{DDLI} can give better reconstruction accuracy than the Landweber iteration \eqref{LI} and  \texttt{IRLI} method~\eqref{IRLI}. For further details on data-driven regularization methods, we refer the reader to \cite{aspri2020dataproj,aspri2023analysis,bajpai2025stochastic,gao2023fast,haltmeier2024data,tong2023data,zhou2025data}. 

Although \texttt{DDIRLI} method \eqref{DDLI} effectively incorporates a prior information from training data, but it inherits the limitations of classical iterative regularization  methods due to its deterministic nature. In contrast, \texttt{SAR} method \eqref{SAR} provides the uncertainty quantification and probabilistic information about the recovered solution, but it does not utilize the prior knowledge learned from training data. This motivates incorporating the data-driven strategy into the stochastic asymptotic regularization framework to combine the strengths of both approaches.
Thus, we propose the following \textit{Data-driven STochastic Asymptotic Regularization} (\texttt{D-STAR}) method as
\begin{align}\label{D-STAR}
    dx^{\delta}(t) = \big(\mathcal{F}' (x^{\delta}(t))^* [v^{\delta} - \mathcal{F}(x^{\delta}(t)) ] + \lambda^{\delta}(t) \hat{A}'(x^{\delta}(t))^* [v^{\delta}-\hat{A}(x^{\delta}(t))] \big)  dt + g^{\delta}(t,x^{\delta}(t)) dB(t),
\end{align}
where $t \geq 0$, $x^\delta(0) = \tilde{w} \in \mathcal{W}$ is the deterministic initial value, $\lambda^{\delta}(t)$  denotes the regularization weight with $\sup_{t \geq 0} \lambda^{\delta}(t) < \Gamma$, $\hat{A} \in \mathcal{B}(\mathcal{W},\mathcal{V})$ and $\{B(t)\}_{t \geq 0}$ is $\mathcal{W}$-valued $\mathcal{Q}$-Wiener process. 
A detailed discussion of $\Gamma$ and $g^{\delta}(t,x^{\delta}(t))$ for known and unknown noise levels are presented in Section \ref{sec:MDP} and Section \ref{sec:heu}, respectively. Moreover, the  construction of $\hat{A}$ follows from ~\cite{aspri2020data}.

We establish the detailed convergence analysis of \texttt{D-STAR} method \eqref{D-STAR} using a modified discrepancy principle which guarantees the finite termination of method.
Compared with the analysis of \texttt{DDIRLI} method in \cite{aspri2020data}, the present setting involves additional analytical challenges. In \cite{aspri2020data}, the regularization method is deterministic and the stopping index is defined in a discrete setting, whereas our proposed method is stochastic and the stopping criterion is formulated in continuous setting.
As a result, the deterministic and discrete  arguments developed therein cannot be applied here directly. Instead, the analysis must be carried out within a probabilistic framework and continuous setting.
The present work also differs from \cite{long2025stochastic} because of the incorporation of the data-driven regularization term into the \texttt{SAR} method. This additional term introduces new difficulties in establishing the well-posedness of \texttt{D-STAR} method \eqref{D-STAR}. While the stochastic component already necessitates the use of  It\^o\textquotesingle{s} formula, an inclusion of the data-driven term produces an additional product term between the classical Landweber and the data-driven components. 
Controlling this interaction constitutes a major technical challenge in the convergence analysis.

It is important to emphasize that the modified discrepancy principle requires prior knowledge of the noise level, which is unavailable in various real-world applications.
This limitation leads to the development of {heuristic rules} \cite{bajpai2026hanke,jahn2023noise,jin2017heuristic,kindermann2020convergence,real2024hanke}, where the stopping criterion is determined by noisy data $v^\delta$ only. To address this setting, we further establish a detailed convergence analysis of the \texttt{D-STAR} method \eqref{D-STAR} under a heuristic stopping rule. This analysis presents additional challenges, which arises due to presence of data-driven and stochastic diffusion terms. In particular, the diffusion coefficient needs to be suitably constructed and controlled without an explicit knowledge of the noise level, while its interaction with the data-driven component must also be handled within the stochastic framework.
To the best of our knowledge, this work provides the first convergence analysis under a heuristic stopping rule within a data-driven, continuous-time, or \texttt{SAR} framework.

We summarize the main contributions of this work as follows:
\begin{itemize}
    \item This is the first rigorous convergence analysis of a regularization method that combines the stochastic asymptotic regularization with the data-driven priors for solving linear and nonlinear ill-posed inverse problems. The proposed framework inherits the stability and regularization properties of stochastic asymptotic regularization while simultaneously exploiting prior information learned from training data to improve the reconstruction.
    
    \item  We establish the  detailed convergence analysis of \texttt{D-STAR} method under a modified discrepancy principle for known noise level, including finite termination, exact data case convergence, and stability along with regularization behavior for noisy data.

   \item We extend the convergence analysis of \texttt{D-STAR} method for unknown noise level by employing a heuristic stopping criterion based only on the noisy data. It constitutes a novel aspect of this work as convergence analysis under a heuristic type stopping has not been established previously even for \texttt{DDIRLI} or \texttt{SAR}, or within a continuous-time framework.
    
    \item We provide numerical experiments to validate \texttt{D-STAR} method and compare its performance with \texttt{DDIRLI} and \texttt{SAR}. The numerical study illustrates the reconstruction accuracy, stopping behavior, and uncertainty quantification under different noise levels and both stopping strategies.
\end{itemize}
\noindent The paper is organized as follows. Section \ref{sec:pre} recalls the necessary concepts from stochastic calculus. Section \ref{sec:as} introduces the required assumptions for the convergence analysis and presents two stopping strategies: a modified discrepancy principle and a heuristic stopping rule. Sections \ref{sec:MDP} and \ref{sec:heu} are devoted for the convergence analysis of \texttt{D-STAR} method under a modified discrepancy principle and a heuristic stopping rule, respectively. Section \ref{sec:num} illustrates the numerical experiments to validate the proposed method and compare its reconstruction accuracy, convergence behavior, and uncertainty quantification with \texttt{DDIRLI} and \texttt{SAR}. Finally, Section \ref{sec:con} concludes this work and gives the possible future directions.


\section{Preliminaries} \label{sec:pre}

\noindent This section presents the fundamental concepts of stochastic calculus required for the analysis of \texttt{D-STAR} method ~\eqref{D-STAR}, whose formulation involves a stochastic integral. For the detailed discussion of these concepts, see~\cite{GawareckiMandrekar2011}. We begin by introducing the following definitions, which will be used in subsequent analysis.

\begin{definition}[Trace-class operator]
Let $\mathcal{W}$ be a real separable Hilbert space equipped with an orthonormal basis $\{ e_j\}_{j=1}^\infty$ and $T: \mathcal{W} \to \mathcal{W}$ be a bounded linear operator. Then $T$ is called a trace-class operator if $\text{tr} \big((TT^*)^{1/2} \big) < \infty$, where the trace of operator $\hat{T}:=(TT^*)^{1/2}$ is defined as $\text{tr}(\hat{T}) = \sum_{j=1}^\infty \langle \hat{T} e_j,e_j \rangle$.
\end{definition}

\begin{definition}[$\mathcal{W}$-valued $\mathcal{Q}$-Wiener process]

Let $\mathcal{Q}$ be a non-negative definite symmetric trace-class operator defined on a real separable Hilbert space $\mathcal{W}$. Suppose $\{\lambda_j\}_{j=1}^\infty$ denotes the eigen values of $\mathcal{Q}$ corresponding to eigen vectors $\{ e_j\}_{j=1}^\infty$, which are forming an orthonormal basis for $\mathcal{W}$. Further, let $\{\beta_j (t)\}_{t \geq 0, j \in \mathbb N}$ be a sequence of independent Brownian motions on filtered probability space $(\Omega,\mathscr{F},\{\mathscr{F}_t\}_{t\geq 0}, \mathbb P)$. Then, the $\mathcal{W}$-valued $\mathcal{Q}$-Wiener process is defined as
\begin{align*}
    B(t) = \sum_{j=1}^\infty \lambda_j^{1/2} \beta_j (t) e_j.
\end{align*}
\end{definition}
\begin{definition}[Hilbert–Schmidt operator]
Let $\mathcal{W}$ and $\mathcal{V}$ be two real separable Hilbert spaces and $\{ e_j\}_{j=1}^\infty$ be an orthonormal basis of $\mathcal{W}$, then a bounded linear operator $T$ is said to be a Hilbert–Schmidt operator if\, $\sum_{j=1}^\infty \|Te_j\|^2_{\mathcal{V}} < \infty$. The collection of all such Hilbert–Schmidt operators is denoted by
$\mathcal{L}_2(\mathcal{W},\mathcal{V})$. Since $\mathcal{W}$ and $\mathcal{V}$ are separable, so $\mathcal{L}_2(\mathcal{W},\mathcal{V})$ is also a separable Hilbert space with the norm 
\begin{align*}
    \|T\|_{\mathcal{L}_2(\mathcal{W},\mathcal{V})} = \Big( \sum_{j=1}^\infty \|Te_j\|^2_{\mathcal{V}} \Big)^{1/2}.
\end{align*}
\end{definition}

\noindent
We next present the construction of the It\^o\textquotesingle{s} stochastic integral with respect to a $\mathcal{W}$-valued $\mathcal{Q}$-Wiener process. 

Let $\mathcal{W}$ be a separable Hilbert space and $\mathcal{Q}$ be a non-negative definite symmetric trace-class operator on $\mathcal{W}$. Let $\{\lambda_j\}_{j=1}^\infty$ denotes the positive eigenvalues of $\mathcal{Q}$ and $\{ e_j\}_{j=1}^\infty$ be the corresponding eigenvectors, which form an orthonormal basis of $\mathcal{W}$. Then, the space $\mathcal{W}_{\mathcal{Q}} = \mathcal{Q}^{1/2} \mathcal{W}$ endowed with
\begin{align*}
    \langle x,y\rangle_{\mathcal{W}_{\mathcal{Q}}} = \sum_{j=1}^\infty \frac{1}{\lambda_j} \langle x,e_j \rangle_{\mathcal{W}} \langle y,e_j \rangle_{\mathcal{W}}, \quad x,y \in \mathcal{W}
\end{align*}
is a separable Hilbert space with an  orthonormal basis $\{\lambda_j^{1/2}e_j\}_{j=1}^\infty$.
Moreover, let $\mathcal{L}_2(\mathcal{W}_{\mathcal{Q}},\mathcal{V})$ denotes the space of Hilbert–Schmidt operators and $\{ \tilde{e}_i\}_{i=1}^\infty$ be an orthonormal basis in $\mathcal{V}$. Then, the Hilbert–Schmidt norm of  $T \in \mathcal{L}_2(\mathcal{W}_{\mathcal{Q}},\mathcal{V})$ is given by
\begin{align*}
    \|T\|^2_{\mathcal{L}_2(\mathcal{W}_{\mathcal{Q}},\mathcal{V})} = & \sum_{j=1}^\infty \|T(\lambda_j^{1/2}e_j)\|^2_{\mathcal{V}} = \sum_{i,j=1}^\infty \|\langle T(\lambda_j^{1/2}e_j), \tilde{e}_i \rangle\|^2_{\mathcal{V}} 
    = \sum_{i,j=1}^\infty \|\langle T(Q^{1/2}e_j),\tilde{e}_i \rangle\|^2_{\mathcal{V}} 
    \\
    = & \|TQ^{1/2}\|^2_{\mathcal{L}_2(\mathcal{W},\mathcal{V})}
    = \text{tr} \Big( (TQ^{1/2}) (TQ^{1/2})^* \Big).
\end{align*}
Additionally, on the probability space $(\Omega,\mathscr{F},\{\mathscr{F}_t\}_{t\geq 0}, \mathbb P),$ we define the space of $\mathcal{L}_2(\mathcal{W}_{\mathcal{Q}},\mathcal{V})$-valued process as
\begin{align*}
    \Lambda_2 (\mathcal{W}_{\mathcal{Q}},\mathcal{V}) = \Big\{ \psi : [0,T] \times \Omega \to \mathcal{L}_2(\mathcal{W}_{\mathcal{Q}},\mathcal{V}) ~ \text{such that}~ \mathbb E \Big[\int_0^T \|\psi(t)\|^2_{\mathcal{L}_2(\mathcal{W}_{\mathcal{Q}},\mathcal{V})} dt \Big] < \infty \Big\}
\end{align*} 
which is a Hilbert space with the norm
\begin{align*}
    \|\psi\|_{\Lambda_2 (\mathcal{W}_\mathcal{Q},\mathcal{V})} = \Big( \mathbb E \Big[\int_0^T  \|\psi(t)\|^2_{\mathcal{L}_2(\mathcal{W}_{\mathcal{Q}},\mathcal{V})} dt \Big]  \Big)^{1/2}.
\end{align*}
For $\psi \in \mathcal{L}_2(\mathcal{W}_{\mathcal{Q}},\mathcal{V})$, the stochastic integral $\int_0^t \psi(s) dB(s), t \in [0,T]$, can be defined as in finite dimensional case. For the detailed discussion, we refer the reader to \cite[section 2.2]{GawareckiMandrekar2011}.

We now state a theorem establishing that the stochastic integral is a square-integrable martingale satisfying the It\^o isometry, which will be used for handling
stochastic integral.

\begin{theorem}\cite[Theorem 2.3]{GawareckiMandrekar2011}\label{th:pre;stoc-int}
The stochastic integral $\psi \to \int_0^t \psi(s)dB(s)$ with respect to a $\mathcal{W}$-valued $\mathcal{Q}$-Wiener process $B(t)$ is a continuous square integrable martingale and satisfies the following condition,
\begin{align*}
    \mathbb E \Big[ \Big\|\int_0^t \psi(s) dB(s) \Big\|^2_{\mathcal{V}} \Big] = \mathbb E  \Big[ \int_0^t \|\psi(s)\|^2_{\mathcal{L}_2(\mathcal{W}_{\mathcal{Q}},\mathcal{V})}ds   \Big] < \infty.
\end{align*}
\end{theorem}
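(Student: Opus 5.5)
The plan is the standard construction of the Itô integral with respect to a $\mathcal{Q}$-Wiener process: prove the isometry and martingale property first for elementary integrands, then extend by density. Since $\mathcal{W}_{\mathcal{Q}}$ is separable with orthonormal basis $\{\lambda_j^{1/2}e_j\}$, the process $B(t)=\sum_j \lambda_j^{1/2}\beta_j(t)e_j$ is a cylindrical Wiener process on $\mathcal{W}_{\mathcal{Q}}$. Hence for $\psi\in\Lambda_2(\mathcal{W}_{\mathcal{Q}},\mathcal{V})$ we may write formally
\begin{align*}
\int_0^t\psi(s)\,dB(s)=\sum_{j}\int_0^t \psi(s)(\lambda_j^{1/2}e_j)\,d\beta_j(s),
\end{align*}
so the problem reduces to countably many scalar Brownian integrals.

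\emph{Step 1: elementary integrands.} Take $\psi(s)=\sum_{k}\phi_k\mathbf{1}_{(t_k,t_{k+1}]}(s)$ with $\phi_k$ an $\mathscr{F}_{t_k}$-measurable, bounded, $\mathcal{L}_2(\mathcal{W}_{\mathcal{Q}},\mathcal{V})$-valued random variable. Define the integral as $\sum_k\phi_k\big(B(t_{k+1}\wedge t)-B(t_k\wedge t)\big)$ and expand $\mathbb E\|\cdot\|_{\mathcal{V}}^2$.
\begin{itemize}
\item For the cross terms with $k<l$, condition on $\mathscr{F}_{t_l}$. The increment $B(t_{l+1})-B(t_l)$ is independent of $\mathscr{F}_{t_l}$ and has mean zero, so these terms vanish.
\item For the diagonal terms, again condition on $\mathscr{F}_{t_k}$. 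Independence of the $\beta_j$ and $\mathbb E[(\Delta\beta_j)^2]=\Delta t$ give
\begin{align*}
\mathbb E\big[\|\phi_k\Delta B_k\|^2\,\big|\,\mathscr{F}_{t_k}\big]=\Delta t_k\sum_j\|\phi_k(\lambda_j^{1/2}e_j)\|^2=\Delta t_k\,\|\phi_k\|^2_{\mathcal{L}_2(\mathcal{W}_{\mathcal{Q}},\mathcal{V})}.
\end{align*}
\end{itemize}
Summing the diagonal terms yields the isometry. The martingale property follows from the same conditional-mean-zero computation, and continuity is immediate because $B$ has continuous paths.

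The infinite sum $\sum_j$ hidden in $\phi_k\Delta B_k$ needs justification. Here $\phi_k$ is Hilbert–Schmidt on $\mathcal{W}_{\mathcal{Q}}$ and $\mathcal{Q}$ is trace class. Truncate to $j\le N$, where everything is finite-dimensional, and pass to the limit in $L^2(\Omega;\mathcal{V})$. The partial sums are Cauchy because their second moments are tails of the convergent series $\sum_j\|\phi_k\lambda_j^{1/2}e_j\|^2$.

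\emph{Step 2: density and extension.} Show that elementary processes are dense in $\Lambda_2(\mathcal{W}_{\mathcal{Q}},\mathcal{V})$ for the norm $\|\cdot\|_{\Lambda_2}$. I would argue as in the finite-dimensional case:
\begin{itemize}
\item first approximate by bounded processes via truncation of the norm;
\item then approximate these by processes with continuous paths, using time mollification $n\int_{(s-1/n)\vee0}^s\psi$, which stays adapted;
\item finally sample on dyadic grids.
\end{itemize}
By Step 1 the integral map is then a linear isometry from a dense subspace into the space of continuous square-integrable $\mathcal{V}$-valued martingales, a closed subspace under $\mathbb E\sup_{t\le T}\|\cdot\|^2$ by Doob's inequality. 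It therefore extends uniquely, and the isometry and martingale property pass to the limit. Continuity of paths survives because Doob's maximal inequality gives uniform-in-$t$ $L^2$ convergence, and a subsequence then converges uniformly almost surely. Finiteness of the right-hand side is just the assumption $\psi\in\Lambda_2$.

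\emph{Main obstacle.} I expect the density argument in Step 2 to be the delicate part. It requires that the approximations remain progressively measurable as $\mathcal{L}_2(\mathcal{W}_{\mathcal{Q}},\mathcal{V})$-valued processes. Separability of $\mathcal{L}_2(\mathcal{W}_{\mathcal{Q}},\mathcal{V})$ is what makes strong measurability and Bochner integration available, so that the mollification step works verbatim from the scalar case.
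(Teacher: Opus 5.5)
Your plan has the same architecture as the cited proof in \cite{GawareckiMandrekar2011}. It proves the isometry and martingale property for elementary integrands, shows that these are dense in $\Lambda_2(\mathcal{W}_{\mathcal{Q}},\mathcal{V})$, and extends by isometry into the space of continuous square-integrable martingales, which is closed by Doob's inequality.

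The real difference is the elementary class.
\begin{itemize}
\item The reference takes coefficients $\phi_k$ with values in bounded operators on all of $\mathcal{W}$. Then $\phi_k\big(B(t_{k+1}\wedge t)-B(t_k\wedge t)\big)$ is defined pathwise, and its continuity in $t$ is inherited from $B$. The price is an extra approximation in the density step, for instance replacing $\psi(s)$ by $\psi(s)P_N$, where $P_N$ is the orthogonal projection onto $\mathrm{span}\{e_1,\dots,e_N\}$. This works because $\|\psi(s)-\psi(s)P_N\|^2_{\mathcal{L}_2(\mathcal{W}_{\mathcal{Q}},\mathcal{V})}=\sum_{j>N}\|\psi(s)(\lambda_j^{1/2}e_j)\|^2\to 0$ and dominated convergence applies.
\item You instead admit Hilbert--Schmidt coefficients on $\mathcal{W}_{\mathcal{Q}}$ and define $\phi_k\Delta B_k$ as an $L^2(\Omega;\mathcal{V})$-limit of sums truncated in $j$. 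Your Cauchy estimate for this limit is correct. It also makes your Step 2 a verbatim copy of the scalar density argument in the separable space $\mathcal{L}_2(\mathcal{W}_{\mathcal{Q}},\mathcal{V})$.
\end{itemize}

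Your choice, however, invalidates one sentence of Step 1: continuity of the elementary integral is not ``immediate because $B$ has continuous paths.'' With your definition $\phi_k$ never acts on $B$ pathwise, and it cannot. For $t>0$ we have $\|B(t)\|^2_{\mathcal{W}_{\mathcal{Q}}}=\sum_j\beta_j(t)^2=\infty$ almost surely, so $B(t)\notin\mathcal{W}_{\mathcal{Q}}$. For each fixed $t$ your integral is therefore only an $L^2$-limit, determined up to a null set.

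Continuity must be produced exactly as you do it in Step 2:
\begin{itemize}
\item the truncated sums are continuous martingales in $t$;
\item Doob's maximal inequality, together with the orthogonality of the terms you already used, makes them Cauchy for $\mathbb{E}\sup_{t\le T}\|\cdot\|^2$;
\item an almost surely uniformly convergent subsequence then gives a continuous version.
\end{itemize}
Alternatively, restrict to coefficients of the form $\phi_kP_N$ and add the projection step above. With either repair the argument is complete.

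You are also right to require progressive measurability. The paper's definition of $\Lambda_2(\mathcal{W}_{\mathcal{Q}},\mathcal{V})$ omits the adaptedness condition, which your martingale and density arguments genuinely use.
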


\noindent
The following theorem introduces the It\^o\textquotesingle{s} formula for stochastic process driven by a $\mathcal{W}$-valued $\mathcal{Q}$-Wiener process, which provides a fundamental tool for analyzing functionals of a solution to stochastic differential equation. 

\begin{theorem}\cite[Theorem 2.9]{GawareckiMandrekar2011}\label{th:pre;ito}
Let $\mathcal{W}$ and $\mathcal{V}$ be two separable Hilbert spaces and $B(t)$ be $\mathcal{W}$-valued $\mathcal{Q}$-Wiener process. Suppose that a stochastic process $Y(t),\; t \in [0,T],$ is given by
\begin{align*}
    Y(t) = Y(0) + \int_0^t \phi(s) ds + \int_0^t \psi(s) dB(s),
\end{align*}
where $Y(0)$ is $\mathscr{F}_0$-measurable $\mathcal{V}$-valued random variable and $\phi(s)$ is $\mathcal{V}$-valued $\mathscr{F}_s$-measurable Bochner-integrable process on $[0,T]$ such that $\int_0^t \|\phi(s)\|_{\mathcal{V}} ds < \infty$ $\mathbb P$-almost surely.
\newline
\noindent
Assume that a function $G:[0,T] \times \mathcal{V} \to \mathbb R$ is continuous and its Fr\'echet partial derivatives $G_t,G_x,G_{xx}$ are continuous and bounded on bounded subset of $[0,T]\times \mathcal{V}$. Then, the following It\^o\textquotesingle{s} formula holds,
\begin{align*}
    G(t,Y(t)) &=  G(0,Y(0)) + \int_0^t \langle G_x(s,Y(s)),\psi(s)dB(s)\rangle_{\mathcal{V}} 
    \\
    & + \int_0^t \Big\{ G_t(s,Y(s)) + \langle G_x(s,Y(s)),\phi(s)\rangle_{\mathcal{V}} 
     + \frac{1}{2} \text{tr} \Big[G_{xx} (s,Y(s)) (\psi(s)Q^{1/2}) (\psi(s)Q^{1/2})^* \Big]  \Big\} ds
\end{align*}
$\mathbb P$-almost surely, for all $t \in [0,T]$.
\end{theorem}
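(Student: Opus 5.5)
The statement is the It\^o formula of Theorem~\ref{th:pre;ito}, which the paper quotes from \cite{GawareckiMandrekar2011}. I would prove it in three stages. First reduce to bounded, elementary integrands. Then obtain the formula from a second-order Taylor expansion along a partition, with the quadratic variation identified through the It\^o isometry of Theorem~\ref{th:pre;stoc-int}. Finally pass to the limit using stopping times and approximation.

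\textbf{Localization.} I would introduce the stopping times
\[
\tau_n=\inf\Big\{t\ge0:\ \|Y(t)\|_{\mathcal V}+\int_0^t\|\phi(s)\|_{\mathcal V}\,ds+\int_0^t\|\psi(s)\|^2_{\mathcal L_2(\mathcal W_{\mathcal Q},\mathcal V)}ds\ge n\Big\}\wedge T .
\]
On $[0,\tau_n]$ the process $Y$ stays in a bounded set. There $G$, $G_t$, $G_x$ and $G_{xx}$ are bounded and uniformly continuous on the relevant compact-in-time, bounded-in-space region. This needs some care, since boundedness alone does not give uniform continuity in infinite dimensions, but uniform continuity along the paths suffices. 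Because $\tau_n\uparrow T$ almost surely, it is enough to prove the formula for the stopped process $Y(\cdot\wedge\tau_n)$.

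\textbf{Elementary case.} Next I would take $\phi$ and $\psi$ to be elementary, that is, constant on the subintervals of a partition and adapted. I would also truncate the noise to finitely many modes, $B_N(t)=\sum_{j\le N}\lambda_j^{1/2}\beta_j(t)e_j$. Along a refining partition $0=t_0<\dots<t_m=t$, telescope
\[
G(t,Y(t))-G(0,Y(0))=\sum_k\big[G(t_{k+1},Y(t_{k+1}))-G(t_k,Y(t_k))\big],
\]
and Taylor-expand each increment to first order in time and second order in space. The first-order terms converge to $\int G_t\,ds$, to $\int\langle G_x,\phi\rangle\,ds$, and to the stochastic integral $\int\langle G_x,\psi\,dB\rangle$. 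The remainders vanish by uniform continuity together with the moment bound on increments from Theorem~\ref{th:pre;stoc-int}.

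The second-order term $\tfrac12\sum_k G_{xx}(\Delta Y_k,\Delta Y_k)$ is the key step. The cross terms involving $\phi\,\Delta t$ are $o(1)$. For the pure noise part, I would show
\[
\sum_k G_{xx}(t_k,Y(t_k))\big(\psi\Delta B_k,\psi\Delta B_k\big)\to\int_0^t\operatorname{tr}\big[G_{xx}(\psi Q^{1/2})(\psi Q^{1/2})^*\big]ds
\]
in $L^2(\Omega)$. With finitely many modes this is the usual computation: $\mathbb E[\Delta\beta_i\Delta\beta_j\mid\mathscr F_{t_k}]=\delta_{ij}\Delta t_k$, and the variance of the sum is controlled by $\sum_k(\Delta t_k)^2\to0$ using independence of the increments. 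Letting $N\to\infty$ then costs an error bounded by the tail $\sum_{j>N}\lambda_j\to0$, which is where the trace-class assumption on $\mathcal Q$ enters.

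\textbf{General integrands.} Finally I would approximate a general $\psi\in\Lambda_2$ by elementary $\psi_n$ in $\Lambda_2$, and approximate $\phi$ in $L^1$ pathwise. By the It\^o isometry the corresponding $Y_n$ converge to $Y$ uniformly in probability, and every term of the formula converges, using dominated convergence with the local bounds on $G$ and its derivatives. I expect the main obstacle to be the convergence of the trace term in infinite dimensions, namely controlling $G_{xx}$ uniformly against the Hilbert--Schmidt norms. The way I would handle it is the mode truncation above together with the bound $|\operatorname{tr}[G_{xx}TT^*]|\le\|G_{xx}\|\,\|T\|^2_{\mathcal L_2}$.
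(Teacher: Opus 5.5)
The paper does not prove this result itself but quotes it from \cite[Theorem 2.9]{GawareckiMandrekar2011}, whose proof (modelled on \cite{DaPratoZabczyk2014}) follows the same scheme as yours, so your proposal is correct: localize with a stopping time built from $\|Y(t)\|$, $\int_0^t\|\phi\|\,ds$ and $\int_0^t\|\psi\|^2_{\mathcal L_2(\mathcal W_{\mathcal Q},\mathcal V)}\,ds$, reduce to elementary integrands, Taylor-expand along partitions while identifying the second-order sum with the trace term, and conclude by approximation. Only minor tightening is needed: uniform-in-time convergence of the approximating processes comes from Doob's maximal inequality rather than the isometry alone; once $\psi$ is replaced by elementary approximants the process is no longer bounded, so you must localize again inside the elementary step (convergence in probability survives this); and the finite-mode truncation is harmless but unnecessary, since for Gaussian increments the conditional variance of each quadratic summand is already $O((\Delta t_k)^2)$ with a constant involving only $\operatorname{tr}\mathcal Q$.
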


\noindent
We conclude this subsection by introducing the following nonlinear stochastic differential equation
\begin{align}\label{SDE}
    dY = \big(H Y + F_1 (t,Y) \big)dt + F_2 (t,Y) dB(t), \quad Y(0) = \zeta, 
\end{align}
where $\zeta$ is $\mathscr{F}_0$-measurable $\mathcal{W}$-valued random variable. For fixed $T>0$, the coefficients $H, F_1$, $ F_2$, and their corresponding well-posedness result is given in Theorem \ref{th:pre:well}.  To state this result, we first introduce the notion of a mild solution to \eqref{SDE}.

\begin{definition}[Mild solution]
Let $Y(t), t \in [0,T],$ be a $\mathcal{W}$-valued, predictable  process. Then it is said to be a \textit{mild solution} of \eqref{SDE} if $\mathbb P \Big( \int_0^T \|Y(s)\|^2_{\mathcal{W}} ds < \infty \Big)=1$, and
\begin{align*}
    Y(t) = G(t) \zeta + \int_0^t G(t-s) F_1(s,Y(s)) ds + \int_0^t G(t-s) F_2(s,Y(s)) dB(s) 
\end{align*}
$\mathbb P\text{-almost surely},$ for any $t \in [0,T]$, where $G(t):=e^{tH}$.    
\end{definition}

\begin{theorem}\cite[section 7.1]{DaPratoZabczyk2014} \label{th:pre:well}
Assume that
\begin{enumerate}[ leftmargin=*]
    \item[(i)] For $t \geq 0$, $H$ is the generator of a $C_0$-semigroup $G(t)=e^{tH}$,  in $\mathcal{W}$. \vspace{0.15cm}
    \item[(ii)] The mappings $F_1 :[0,T]\times \Omega \times \mathcal{W} \to \mathcal{W}$, $F_2 :[0,T]\times \Omega \times \mathcal{W} \to \mathcal{L}_2(\mathcal{W}_{\mathcal{Q}},\mathcal{V})$ are measurable from $[0,T] \times \Omega \times \mathcal{W}$ into $(\mathcal{W},\mathscr{B}(\mathcal{W}))$ and $(\mathcal{L}_2(\mathcal{W}_{\mathcal{Q}},\mathcal{V}), \mathscr{B}(\mathcal{L}_2(\mathcal{W}_{\mathcal{Q}},\mathcal{V}))$, respectively , where $\Omega$ is sample space and $\mathscr{B}(\cdot)$ denotes the Borel $\sigma$-field associated with the corresponding space. \vspace{0.15cm}

    \item[(iii)] There exist constants $K, \bar{K}>0$ such that for all $w,\bar{w} \in \mathcal{W}, t \in [0,T],$ and $\omega \in \Omega$,
    \begin{align}
        \|F_1(t,\omega,w)-F_1(t,\omega,\bar{w})\|_{\mathcal{W}} + \|F_2(t,\omega,w)-F_2(t,\omega,\bar{w})\|_{\mathcal{L}_2(\mathcal{W}_{\mathcal{Q}},\mathcal{V})} &\, \leq K \|w-\bar{w}\|_{\mathcal{W}}, \label{eq:pre:lip}
        \\
        \|F_1(t,\omega,w)\|^2_{\mathcal{W}} + \|F_2(t,\omega,w)\|^2_{\mathcal{L}_2(\mathcal{W}_{\mathcal{Q}},\mathcal{V})} &\, \leq \bar{K}(1+\|w\|^2_{\mathcal{W}}). \label{eq:pre:gr}
    \end{align}
\end{enumerate}
Then, there exists a mild solution $Y(t)$ to \eqref{SDE} which is unique, up to equivalence, among the
processes satisfying $\mathbb P \Big( \int_0^T \|Y(s)\|^2_{\mathcal{W}} ds < \infty \Big)=1$. Moreover, the solution $Y(t)$ possesses a continuous modification and for any $p\geq2$, there exist constants $K_{p,T},\hat{K}_{p,T}>0$ such that 
\begin{align}
    \sup_{t \in [0,T]} \mathbb E \big[\|Y(t)\|^p_{\mathcal{W}} \big] \leq &\, K_{p,T} \big(1+\mathbb E \big[\|\zeta\|^p _{\mathcal{W}}\big] \big) \quad \text{and} \quad \mathbb E \left[ \sup_{t \in [0,T]} \|Y(t)\|^p_{\mathcal{W}} \right] \leq \hat{K}_{p,T} \big(1+\mathbb E \big[\|\zeta\|^p_{\mathcal{W}} \big] \big).\label{eq:pre:MB1}
\end{align}
    
\end{theorem}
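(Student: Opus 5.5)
The plan is the classical Picard iteration (fixed-point) argument for mild solutions, following Da Prato--Zabczyk. Fix $T>0$ and $p\geq 2$. Let $\mathcal{H}_p$ be the Banach space of $\mathcal{W}$-valued predictable processes $Y$ on $[0,T]$ with norm
\begin{align*}
\|Y\|_{\mathcal{H}_p}:=\Big(\sup_{t\in[0,T]}\mathbb E\big[\|Y(t)\|^p_{\mathcal{W}}\big]\Big)^{1/p},
\end{align*}
identifying modifications. On $\mathcal{H}_p$ define
\begin{align*}
\mathcal{K}(Y)(t)=G(t)\zeta+\int_0^t G(t-s)F_1(s,Y(s))\,ds+\int_0^t G(t-s)F_2(s,Y(s))\,dB(s).
\end{align*}
Since $H$ generates a $C_0$-semigroup, $\|G(t)\|\leq Me^{\omega t}$, so $M_T:=\sup_{t\le T}\|G(t)\|<\infty$.

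\textbf{Step 1: $\mathcal{K}$ maps $\mathcal{H}_p$ into itself.} Predictability of $\mathcal{K}(Y)$ follows from assumption (ii) and strong continuity of $G$. For the bound, the deterministic convolution is handled by H\"older's inequality together with the growth bound \eqref{eq:pre:gr}. The stochastic convolution needs a $p$-th moment estimate of the form
\begin{align*}
\mathbb E\Big\|\int_0^t G(t-s)\psi(s)\,dB(s)\Big\|^p\leq c_p M_T^p\, T^{(p-2)/2}\int_0^t\mathbb E\|\psi(s)\|^p_{\mathcal{L}_2(\mathcal{W}_{\mathcal{Q}},\mathcal{V})}\,ds.
\end{align*}
For $p=2$ this is exactly Theorem \ref{th:pre;stoc-int}. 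For general $p$, fix $t$; the process $r\mapsto\int_0^r G(t-s)\psi(s)\,dB(s)$ is then a genuine martingale, so the Burkholder--Davis--Gundy inequality applies.

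\textbf{Step 2: contraction.} By the Lipschitz condition \eqref{eq:pre:lip} and the same two estimates,
\begin{align*}
\mathbb E\|\mathcal{K}(Y)(t)-\mathcal{K}(\bar Y)(t)\|^p\leq C\int_0^t\mathbb E\|Y(s)-\bar Y(s)\|^p\,ds.
\end{align*}
Iterating this bound gives $\|\mathcal{K}^n Y-\mathcal{K}^n\bar Y\|^p_{\mathcal{H}_p}\leq (CT)^n/n!\,\|Y-\bar Y\|^p_{\mathcal{H}_p}$, so some power of $\mathcal{K}$ is a contraction. (Equivalently, one can use a weighted norm $\sup_t e^{-\beta t}(\cdot)$.) This yields a unique fixed point in $\mathcal{H}_p$.

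\textbf{Step 3: uniqueness in the larger class.} The statement only assumes $\mathbb P(\int_0^T\|Y\|^2<\infty)=1$, which is weaker than membership in $\mathcal{H}_p$. To cover it, localize with the stopping times $\tau_R=\inf\{t:\int_0^t\|Y\|^2\geq R\}$, apply Gronwall's inequality to the difference of two solutions stopped at $\tau_R$, and let $R\to\infty$.

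\textbf{Step 4: moment bounds.} Applying Gronwall to $\mathbb E\|Y(t)\|^p\leq C(1+\mathbb E\|\zeta\|^p)+C\int_0^t\mathbb E\|Y(s)\|^p\,ds$ gives the first estimate in \eqref{eq:pre:MB1}.

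\textbf{Main obstacle.} The difficult step is the continuous modification and the estimate for $\mathbb E\sup_t$. The stochastic convolution is not a martingale in $t$, so BDG cannot be applied directly. I would use the factorization method. For $\alpha\in(1/p,1/2)$, write
\begin{align*}
\int_0^t G(t-s)\psi\,dB=\frac{\sin\pi\alpha}{\pi}\int_0^t(t-\sigma)^{\alpha-1}G(t-\sigma)Z(\sigma)\,d\sigma,\qquad Z(\sigma)=\int_0^\sigma(\sigma-s)^{-\alpha}G(\sigma-s)\psi(s)\,dB(s).
\end{align*}
By BDG, $Z\in L^p(\Omega\times[0,T])$. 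The map $Z\mapsto\int_0^t(t-\sigma)^{\alpha-1}G(t-\sigma)Z(\sigma)\,d\sigma$ sends $L^p(0,T;\mathcal{W})$ continuously into $C([0,T];\mathcal{W})$ when $\alpha>1/p$, by H\"older's inequality. This gives both the continuous modification and $\mathbb E\sup_t\|Y\|^p\le\hat K_{p,T}(1+\mathbb E\|\zeta\|^p)$ for $p>2$. The case $p=2$ then follows from Jensen's inequality applied to the bound for a larger $p$, provided $\zeta$ has that moment; otherwise I would localize on $\|\zeta\|$.
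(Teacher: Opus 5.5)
The paper does not prove this statement; it only cites Da Prato--Zabczyk. Your Steps 1--4 and the factorization argument for continuity are essentially that source's argument: the fixed point in $\mathcal H_p$, the BDG bound for the stochastic convolution at fixed $t$, localization for uniqueness, and Gronwall for $\sup_t\mathbb E\|Y(t)\|^p$.

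The genuine gap is the supremum bound in \eqref{eq:pre:MB1} for $p=2$. The statement asserts it, but factorization cannot reach it, since it needs $1/p<\alpha<1/2$. Your Jensen step does not produce it either. From the estimate for some $q>2$ you only obtain
\begin{align*}
\mathbb E\Big[\sup_{t\in[0,T]}\|Y(t)\|^2\Big]\leq\Big(\hat K_{q,T}\big(1+\mathbb E\big[\|\zeta\|^q\big]\big)\Big)^{2/q},
\end{align*}
which is governed by $\mathbb E\|\zeta\|^q$ rather than $\mathbb E\|\zeta\|^2$. So no constant $\hat K_{2,T}$ independent of $\zeta$ results, and the right-hand side is infinite if $\zeta$ has only second moments. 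The truncation you list as a fallback is therefore needed in every case. It must also be carried out so that the bounds scale with the probability of each piece.

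Concretely, let $\Omega_n:=\{n-1\leq\|\zeta\|<n\}\in\mathscr F_0$. Since $\mathbf 1_{\Omega_n}$ is $\mathscr F_0$-measurable, it passes through both integrals of the mild formulation. Hence $Y_n:=\mathbf 1_{\Omega_n}Y$ is the mild solution with datum $\mathbf 1_{\Omega_n}\zeta$ and coefficients $\mathbf 1_{\Omega_n}F_i$. These coefficients obey the sharper growth bound $\|\mathbf 1_{\Omega_n}F_1(t,\omega,w)\|^2+\|\mathbf 1_{\Omega_n}F_2(t,\omega,w)\|^2\leq\bar K(\mathbf 1_{\Omega_n}(\omega)+\|w\|^2)$. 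Re-running your Step 4 and the factorization estimate with this bound gives $\mathbb E[\sup_t\|Y_n(t)\|^q]\leq C_{q,T}\big(\mathbb P(\Omega_n)+\mathbb E[\mathbf 1_{\Omega_n}\|\zeta\|^q]\big)\leq C_{q,T}(1+n^q)\,\mathbb P(\Omega_n)$. H\"older on $\Omega_n$ then yields $\mathbb E[\mathbf 1_{\Omega_n}\sup_t\|Y(t)\|^2]\leq C(1+n^2)\,\mathbb P(\Omega_n)$. Summing over $n$, using $n\leq1+\|\zeta\|$ on $\Omega_n$, gives the $p=2$ estimate with a constant independent of $\zeta$.

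The same truncation, patched together via uniqueness, is also what you need for existence and for the continuous modification when $\zeta$ is merely $\mathscr F_0$-measurable. Your Steps 1--2 presuppose $\mathbb E\|\zeta\|^p<\infty$.

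Two smaller points:
\begin{itemize}
\item In Step 3, estimate $\mathbf 1_{\{t\leq\tau_R\}}(Y_1-Y_2)(t)$ rather than a stopped process, replacing the integrands by $\mathbf 1_{\{s\leq\tau_R\}}(\cdots)$ at fixed $t$. This is needed because the convolution is not a martingale in $t$.
\item In the only case the paper actually uses ($H=0$ in Lemma \ref{lem:well}), the stochastic term is a genuine martingale. Doob's inequality then gives the supremum bound for every $p\geq2$ without factorization.
\end{itemize}
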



\section{Assumptions and Stopping Rules}
\label{sec:as}
\noindent
This section begins by specifying the assumptions required to establish the convergence analysis of \texttt{D-STAR} method~\eqref{D-STAR}. For notational convenience, explicit reference to the underlying norm spaces is omitted throughout the convergence analysis. Subsequently, we present the stopping rules that govern the termination of the proposed method.

Let $w^\dagger$ be the solution of \eqref{IP} and $B_\rho(w^\dagger)$ denotes the ball of radius $\rho>0$ centered at $w^\dagger$, i.e. $B_\rho(w^\dagger):= \{w\in \mathcal{W} : \big(\mathbb E \big[\|w-w^\dagger\|^2 \big] \big)^{1/2} \leq \rho \}$. Then, we make the following assumptions on $\mathcal{F}$ and $\hat{A}$. 

\begin{assumption}\label{Assum}
Suppose there exists $w^\dagger \in B_\rho(\tilde{w})$ such that $\mathcal{F}(w^\dagger)=v$. Then,
\begin{enumerate}[label=($A\arabic*$)]
    \item $\mathcal{F}$ is a continuous operator and possess a continuous, bounded Fr\'echet derivative, i.e. there exists a constant $L_{\mathcal{F}}>0$  such that
   \(
        \|\mathcal{F}'(w)\| \leq L_{\mathcal{F}}
\) 
for all $w\in B_{2\rho}(\tilde{w})$.
    \item There exists a constant $\mu \in (0,1)$ such that
\(
    \| \mathcal{F}(w) - \mathcal{F}(\bar{w}) -\mathcal{F}'(\bar{w})(w-\bar{w}) \| \leq  \mu \|\mathcal{F}(w)-\mathcal{F}(\bar{w}) \|
\)
for all $w, \bar{w} \in B_{2\rho}(\tilde{w})$.
     \item $\hat{A}$ is a bounded linear operator with continuous Fr\'echet derivative. Moreover, there exists a constant $L_{\hat{A}} >0$ such that
\(
 \|\hat{A}'(w)\| \leq L_{\hat{A}}
\)
for all $w\in B_{2\rho}(\tilde{w})$.

     \item There exist constants $C_{\max}^{\hat{A}} \geq C_{\min}^{\hat{A}} >0$ such that
    \(
         C_{\min}^{\hat{A}} \leq \| \hat{A}(w^\dagger)-v \| \leq C_{\max}^{\hat{A}}.
    \)

     \item There exists a constant $R_{\mathcal{F}}>0$ such that 
   \(
        \|\mathcal{F}'(w)^* [v^{\delta}- \mathcal{F}(w)] \| \leq R_{\mathcal{F}}
   \)
   for all $w\in B_{2\rho}(\tilde{w})$ and $v^\delta \in \mathcal{V}$.
     
     \item For all $w, \bar{w} \in B_{2\rho}(\tilde{w})$ and $v^\delta \in \mathcal{V}$, there exist constants $\tilde{L}_\mathcal{F},\tilde{L}_{\hat{A}} >0$ such that 
     \begin{align*}
        \|\mathcal{F}'(w)^* [v^{\delta}- \mathcal{F}(w)]-\mathcal{F}'(\bar{w})^* [v^{\delta}- \mathcal{F}(\bar{w})]\| \leq &\, \tilde{L}_\mathcal{F} \|w-\bar{w}\|,
        \\
        \|\hat{A}' (w)^* [v^{\delta}-\hat{A}(w)]-\hat{A}'(\bar{w})^* [v^{\delta}- \hat{A}(\bar{w})]\| \leq &\, \tilde{L}_{\hat{A}} \|w-\bar{w}\|.
     \end{align*}
\end{enumerate}
\end{assumption}
\noindent Assumption \ref{Assum} provides the analytical framework required for the convergence analysis of \texttt{D-STAR} method \eqref{D-STAR}. Specifically, (A1) ensures the local boundedness of the Fr\'echet derivative, while (A2), which is known as the tangential cone condition, quantifies the nonlinearity of $\mathcal{F}$. These two conditions have been verified for nonlinear integral equations and parameter identification problems in PDEs, see~\cite{hanke1995convergence}. The condition (A3) imposes the local boundedness of the Fr\'echet derivative of data-driven operator $\hat{A}$, whereas (A4) ensures that $\hat{A}$ provides only a partial information for the exact data $v$. Both assumptions are standard in the analysis of data-driven regularization methods \cite{zhou2025data}. Condition (A5) is required to establish the well-posedness of \texttt{D-STAR} method~\eqref{D-STAR}, particularly due to the presence of the regularization weight $\lambda^\delta(t)$ in the drift term.
In addition, (A6) provides the conditions required for the well-posedness of \texttt{D-STAR} method~\eqref{D-STAR} due to presence of terms containing $\mathcal{F}$ and $\hat{A}$ in drift. It was originally introduced only for the operator $\mathcal{F}$ in \cite{long2025stochastic}.

\begin{remark}
    We emphasize that (A5) of Assumption \ref{Assum} can be interpreted as a local boundedness condition on the Fr\'echet derivative of the residual 
     $\|\mathcal{F}(w)-v^\delta\|^2/2$. Since $\mathcal{F}'(w)^*$ is a bounded linear operator, so (A5) is well-defined. Moreover, it can be verified for parameter identification problems in PDEs and nonlinear integral equations see~\cite{hanke1995convergence}.
\end{remark}

\begin{proposition}\label{R:lin;gr}
   By using (A3), (A4) and (A6) of Assumption \ref{Assum}, the following holds,
   \begin{align*}
        \|\hat{A}' (w)^* [v^{\delta}- \hat{A}(w)]\| \leq &\,  \tilde{L}_{\hat{A}} \big( \|w\| +  \|{w^\dagger}\| \big) + L_{\hat{A}} \big( \|v^{\delta}-v\| +  C_{\max}^{\hat{A}} \big) \quad \text{for all} \quad w  \in \mathcal{D}(\mathcal{F}) ~~\text{and }~~ v^{\delta} \in \mathcal{V}.
   \end{align*}
\end{proposition}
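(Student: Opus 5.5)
The plan is to compare the data-driven gradient at an arbitrary point $w$ with its value at the exact solution $w^\dagger$. The difference is then controlled by the Lipschitz-type estimate (A6), and the term at $w^\dagger$ is bounded by (A3) and (A4). Concretely, I would start from the triangle inequality
\begin{align*}
\|\hat{A}'(w)^*[v^\delta-\hat{A}(w)]\| \leq \|\hat{A}'(w)^*[v^\delta-\hat{A}(w)] - \hat{A}'(w^\dagger)^*[v^\delta-\hat{A}(w^\dagger)]\| + \|\hat{A}'(w^\dagger)^*[v^\delta-\hat{A}(w^\dagger)]\|.
\end{align*}

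For the first term I would invoke the second inequality of (A6) with $\bar w = w^\dagger$. This gives the bound $\tilde{L}_{\hat{A}}\|w-w^\dagger\|$, and one further triangle inequality yields $\tilde{L}_{\hat{A}}(\|w\|+\|w^\dagger\|)$.

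For the second term I would first use that the adjoint has the same operator norm as $\hat{A}'(w^\dagger)$, which is at most $L_{\hat{A}}$ by (A3); here $w^\dagger \in B_\rho(\tilde w)\subset B_{2\rho}(\tilde w)$. It then remains to bound $\|v^\delta-\hat{A}(w^\dagger)\|$. Inserting $v$ gives
\[
\|v^\delta-\hat{A}(w^\dagger)\|\leq \|v^\delta-v\|+\|v-\hat{A}(w^\dagger)\|\leq \|v^\delta-v\|+C^{\hat A}_{\max},
\]
where the last step uses (A4). Adding the two estimates gives exactly the claimed inequality.

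The only delicate point is the domain of validity. Assumption (A6) is stated for $w,\bar w\in B_{2\rho}(\tilde w)$, whereas the proposition is claimed for all $w\in\mathcal{D}(\mathcal{F})$. I would handle this in the way the proposition is intended to be used, namely as a linear growth condition for the well-posedness argument. There are two ways to justify it:
\begin{itemize}
\item Read (A6) as holding on the domain where the drift is considered, since it serves as a global Lipschitz condition in the sense of Theorem \ref{th:pre:well}.
\item Restrict the statement to $w\in B_{2\rho}(\tilde w)$.
\end{itemize}
Apart from this bookkeeping, every step is a direct application of the triangle inequality together with (A3), (A4) and (A6).
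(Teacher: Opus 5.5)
Your proof is correct and matches the paper's argument step for step: a triangle inequality around $w^\dagger$, then (A6) with $\bar w = w^\dagger$ for the difference term, and (A3), (A4) for the term evaluated at $w^\dagger$. Your remark on the domain is well taken, since the paper passes over it. Your first reading is the one the paper actually uses, because Lemma~\ref{lem:well} explicitly assumes Assumption~\ref{Assum} for all $w,\bar w\in\mathcal{W}$.
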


\begin{proof}
Using triangle inequality along with (A3), (A4) and (A6) of Assumption \ref{Assum}, one obtains
   \begin{align*}
        \|\hat{A}' (w)^* &\, [v^{\delta}- \hat{A}(w)]\| \leq  \|\hat{A}' (w)^* [v^{\delta}- \hat{A}(w)]-\hat{A}' (w^{\dagger})^* [v^{\delta}- \hat{A}(w^{\dagger})]\| + \|\hat{A}' (w^{\dagger})^* [v^{\delta}- \hat{A}(w^{\dagger})]\|
        \\
        \leq &\,  \tilde{L}_{\hat{A}}  \|w-{w^{\dagger}}\| + L_{\hat{A}} \|v^{\delta}- \hat{A}(w^{\dagger})\|
        \leq \tilde{L}_{\hat{A}}  \big( \|w\| + \|{w^{\dagger}}\| \big) + L_{\hat{A}} \big( \|v^{\delta}-v\| +  C_{\max}^{\hat{A}} \big). \qedhere
   \end{align*}
\end{proof}

\noindent
We now introduce the stopping rules for \texttt{D-STAR} method \eqref{D-STAR} to ensure appropriate termination in presence of the noisy data $v^\delta$ and to avoid both premature termination and overfitting of noise. For known noise level $\delta$, we employ a modified discrepancy principle, presented in Rule \ref{rule:MDP}. However, in many practical applications, noise level $\delta$ is not known and only the noisy data $v^\delta$ is available. In such cases, modified discrepancy principle cannot be applied. Therefore, we introduce a heuristic stopping criterion given in Rule \ref{rule:HR}.
Let us start by introducing the following modified discrepancy principle for known noise level, which is a variant of Morozov\textquotesingle{s} discrepancy principle \cite{morozov1966solution}.

\begin{rulenv}[Modified discrepancy principle] \label{rule:MDP}
Let the noise level $\delta$ is known and the noisy data $v^\delta$ satisfies \eqref{noise:lev}. Then, the stopping time $t^*:=t^*(\delta,v^\delta)$ is defined as
\begin{align*}
    t^* = \inf\{t>0: \hat{G}(t) \leq 0\},
\end{align*}
where 
\(
    \hat{G}(t):=  \mathbb E \big[\|v^{\delta}-\mathcal{F}(x^{\delta}(t)) \|^2 \big] - \tau^2 \delta^2,
\)
in which the constant $\tau>1$ is chosen such that
\begin{align*}
    \tau > \frac{2(\mu+1)}{2-(2+\epsilon)\mu}, \quad \epsilon \in \Big( 0, \frac{2}{\mu}-2 \Big) 
\end{align*}
which is essential for the proof of Lemma \ref{lem:mono}. 
\end{rulenv}

\noindent
Next, motivated by \cite{hanke1996general}, we introduce the following heuristic stopping criterion for the noisy data \(v^\delta\) in case of unknown noise level. A similar strategy for Banach space framework is studied in  \cite{jin2016hanke}.

\begin{rulenv}[Heuristic rule]\label{rule:HR}
    Let $a \geq 1$ be a fixed constant and $v^{\delta}$ denotes the noisy data. Assume
    \begin{align*}
        \alpha (t,v^{\delta}):= (t+a) \mathbb E \big[\|v^{\delta}-\mathcal{F}(x^{\delta}(t))\|^2 \big].
    \end{align*}
    Then, the stopping time $\tilde{t}^* := \tilde{t}^*(v^{\delta})$ is defined as
 \(
        \tilde{t}^* \in \arg\inf \{\alpha (t,v^{\delta}): t \in [0,t_\infty] \},
\)
    where $t_\infty$ is the largest number such that $x^{\delta}(t) \in \mathcal{D}(\mathcal{F})$ for all $t \in [0,t_\infty]$.
\end{rulenv}
\noindent Rule \ref{rule:HR} is straightforward to implement. During the iterative process, the values of $\alpha (t,v^{\delta})$ are recorded against $t$. After a sufficiently large number of iterations, method is terminated, and the stopping time $\tilde{t}^*$ is chosen as the minimizer of $\alpha (t,v^{\delta})$. 

While the proposed rule avoids the need for a prior knowledge of the noise level $\delta$, its theoretical justification is considerably more challenging. In particular, {Bakushinskii\textquotesingle{s}} veto \cite{bakushinskii1985remarks} states that heuristic rules cannot guarantee convergence in the worst-case setting. Consequently, the convergence analysis requires an additional assumption on the noisy data. Since \texttt{D-STAR} is a stochastic regularization method, the residual is itself a random quantity, therefore it is natural to formulate the \textit{noise condition} in stochastic sense. Hence, in the present work, we impose the following noise condition.

\begin{assumption}\label{as:HR} 
Let $v^\delta$ be the noisy data and   satisfies  $\|v^\delta -v\| \to 0$ as $\delta \to 0$. In addition, assume that there exists a constant $\kappa>0$ such that
    \begin{align*}
       \mathbb E \big[\|v^{\delta} - \mathcal{F}(w)\|^2 \big] \geq \kappa \|v^{\delta}-v\|^2
    \end{align*}
    for every $w\in \mathcal{S}(v^{\delta}):= \{x^{\delta}(t) : t \in [0,t_\infty] \}$, where $x^{\delta}(t)$ is the solution of \texttt{D-STAR} method \eqref{D-STAR}.
\end{assumption}
\noindent The above assumption ensures that the expected residual
$ \mathbb E \big[\|v^{\delta}- \mathcal{F}(w)\|^2 \big]$ does not become significantly smaller than $\|v^{\delta}-v\|^2$. Such  assumption in deterministic setting is standard for the analysis of heuristic parameter choice rules \cite{bajpai2026hanke,fu2020analysis, hanke1996general,jin2010heuristic,jin2017heuristic,kindermann2020convergence,real2024hanke} and it is essential for circumventing Bakushinskii's veto \cite{bakushinskii1985remarks}. This is an abstract condition which is generally difficult to verify in practice. In this direction, a detailed numerical study for nonlinear Landweber iteration is presented in \cite{hubmer2022numerical} and more recently, for an acoustic-electric tomography in \cite{liang2026complete}.


\section{Convergence Analysis of \texttt{D-STAR} Method under Modified Discrepancy Principle}\label{sec:MDP}
\noindent
This section illustrates the regularizing property of \texttt{D-STAR} method \eqref{D-STAR} under a modified discrepancy principle given in Rule \ref{rule:MDP} with a known noise level $\delta$ satisfying \eqref{noise:lev}. 
To this end, we first give the detailed description of \texttt{D-STAR} method \eqref{D-STAR} along with its monotonicity result and finite termination property in Subsection \ref{subsec:method}, then  its convergence to true solution of \eqref{IP} for the exact data $v$ in Subsection \ref{subsec:ex:conv}. Finally, Subsection \ref{subsec:noisy:conv}, establishes  its regularizing property for the noisy data.

\subsection{The method} \label{subsec:method}
Let us start by introducing the function  $g^{\delta}(t,x^\delta(t))$ as
   \begin{align}
       g^{\delta}(t,x^\delta(t)):=g^{\delta}(t) = \delta h(t), \label{def:f}
   \end{align}
 where $h(t)$ is a monotonically decreasing function such that
 \begin{align*}
     \lim_{t \to \infty} h(t)=0, \quad  h(0)= \tau \sqrt{\mu \epsilon}, \quad \epsilon \in \Big( 0, \frac{2}{\mu}-2 \Big).
 \end{align*}
Throughout this section, we adapt the definition of $g^{\delta}(t,x^\delta(t))$ as defined in \eqref{def:f} and for notational convenience, it is denoted as $g^\delta(t)$. Note that $g^\delta(t)$ depends on the noise level $\delta$ but independent of  $x^\delta(t)$ and chosen in such a way that it decays gradually with time. This choice of $g^{\delta}(t,x^\delta(t))$ is motivated by the analytical framework and the formulation of modified discrepancy principle.
Moreover, one can observe that
\begin{align}
    |g^{\delta}(t)|^2 = \delta^2 |h(t)|^2 \leq \delta^2 |h(0)|^2 \leq \delta^2 \tau^2 \mu  \epsilon < \mu \epsilon \mathbb E \big[\|v^{\delta}-\mathcal{F}(x^{\delta}(t)) \|^2 \big]\label{eq:MDP:f;del}
\end{align}
for all $t < t^*$. This estimate will be crucial for the subsequent analysis.

Next, we define the upper bound $\Gamma$ of the regularization weight $\lambda^{\delta}(t)$,  which is needed for the well-posedness of \texttt{D-STAR} method \eqref{D-STAR} with \eqref{def:f}. Specifically, $\Gamma$ is chosen as $\sup_{t\geq0} \lambda^\delta(t) = \Gamma :=  R_{\mathcal{F}}^2/\beta $
 so that \eqref{as:lamb,del} in Lemma \ref{lem:mono} is also satisfied, where constant $\beta>0$ depends on $C_{\max}^{\hat{A}},\delta,L_{\hat{A}},\tilde{w},$ and $ \rho$. Its explicit value is provided in Lemma \ref{lem:mono}. 
\begin{remark}
 Since $\mu$ quantifies the nonlinearity of $\mathcal{F}$, we introduce $\epsilon$ in the definition of $h(0)$ to ensure that $\mu \in(0,1)$. Indeed, in the nonlinear case, the restriction on $\tau$ in Rule \ref{rule:MDP} yields $\mu$ only in $(0,2/3)$ without this adjustment. When $\mathcal{F}$ is linear, i.e. $\mu =0$, no additional adjustment through $\epsilon$ is required in this case, and we set $h(0) = \tau$.
 \end{remark}

\noindent The pseudo-code of \texttt{D-STAR} method \eqref{D-STAR} with \eqref{def:f} under a modified discrepancy principle is presented in Algorithm \ref{alg:MDP}.

\begin{algorithm}
\caption{\texttt{D-STAR} with modified discrepancy principle.}\label{alg:MDP}
 \begin{description}[leftmargin=0pt, style=sameline]

\item[\textbf{Input}]
Forward operator $\mathcal{F}$, noisy data $v^\delta$, noise level $\delta$, parameters $\tau, \beta$, initial value $\tilde{w}$
and a function $h(t)$.

\item[\textbf{Training}]
Construct $\widehat{A}$ using training data
$(w_i,\mathcal{F}(w_i))_{1\leq i\leq m}$ such that
$\widehat{A}(w_i):=\mathcal{F}(w_i)$.

\item[\textbf{Repeat}]

\begin{enumerate}
        \item For $t \ge 0$, choose a  data-driven parameter $\lambda^\delta(t)\leq R_{\mathcal{F}}^2/\beta$ such that 
\begin{align*}
    \lambda^{\delta}(t)  \leq {K_{\lambda^{\delta}} \mathbb E \big[\|\mathcal{F}' (x^{\delta}(t))^* [v^{\delta}-\mathcal{F} (x^{\delta}(t))]\|^2 \big]}/\beta \quad  \text{and} \quad 
     2- (2+\epsilon)\mu  -\frac{2(\mu +1)}{\tau}  -K_{\lambda^{\delta}} L_{\mathcal{F}}^2 > 0.
\end{align*}

     \item Solve:     
\(
    dx^{\delta}(t) = \big(\mathcal{F}' (x^{\delta}(t))^* [v^{\delta} - \mathcal{F}(x^{\delta}(t)) ] + \lambda^{\delta}(t) \hat{A}'(x^{\delta}(t))^* [v^{\delta}-\hat{A}(x^{\delta}(t))] \big)  dt + \delta h(t) dB(t).
\)
    \end{enumerate}

\item[\textbf{Until}] $\mathbb E \big[\|v^\delta-\mathcal{F}(x^\delta(t))\|^2 \big] \leq \tau^2 \delta^2$. 

\item[\textbf{Output}]  $x^\delta(t^*)$.
\end{description}
\end{algorithm}

\noindent Since \texttt{D-STAR} method \eqref{D-STAR} is formulated as  a stochastic differential equation, it is essential to establish its well-posedness. Accordingly, we first establish the following lemma.
\begin{lemma}\label{lem:well}
    Let Assumption \ref{Assum} be satisfied for all $w, \bar{w} \in \mathcal{W}$. Then for every $t \in [0,T]$, $v^{\delta} \in \mathcal{V}$ and $g^\delta(t) \in L^\infty(\mathbb{R}_+)$, the proposed \texttt{D-STAR} method~\eqref{D-STAR} admits a unique mild solution $x^{\delta}(t)$.
\end{lemma}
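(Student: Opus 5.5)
We will obtain Lemma \ref{lem:well} directly from Theorem \ref{th:pre:well}. We cast \eqref{D-STAR} in the form \eqref{SDE} by taking $H=0$, which generates the trivial $C_0$-semigroup $G(t)=I$, and
\begin{align*}
F_1(t,w) := \mathcal{F}'(w)^*[v^\delta-\mathcal{F}(w)] + \lambda^\delta(t)\hat{A}'(w)^*[v^\delta-\hat{A}(w)], \qquad F_2(t,w) := g^\delta(t)\,\mathrm{Id},
\end{align*}
with $\zeta=\tilde{w}$ deterministic, so that $\mathbb E[\|\zeta\|^p]=\|\tilde w\|^p<\infty$. For this choice the mild solution coincides with the integral (strong) form of \eqref{D-STAR}. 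It therefore suffices to verify conditions (i)--(iii) of Theorem \ref{th:pre:well}.

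Condition (i) holds trivially. For condition (ii), $F_1$ is continuous in $w$ because $\mathcal{F}$, $\mathcal{F}'$ and $\hat A'$ are continuous by (A1) and (A3). Assuming $\lambda^\delta$ is a measurable deterministic weight with $\sup_t\lambda^\delta(t)<\Gamma$, $F_1$ is Carath\'eodory and hence jointly measurable. $F_2$ is independent of $w$ and measurable in $t$. Here $F_2$ takes values in $\mathcal{L}_2(\mathcal{W}_\mathcal{Q},\mathcal{W})$, since $\|\mathrm{Id}\|^2_{\mathcal{L}_2(\mathcal{W}_\mathcal{Q},\mathcal{W})}=\operatorname{tr}\mathcal{Q}<\infty$.

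For the Lipschitz bound \eqref{eq:pre:lip}, we apply (A6) on all of $\mathcal{W}$, as the lemma permits, together with $\lambda^\delta(t)\le\Gamma$. This gives
\[
\|F_1(t,w)-F_1(t,\bar w)\|\le(\tilde L_\mathcal{F}+\Gamma\tilde L_{\hat A})\|w-\bar w\|,
\]
and the $F_2$ difference vanishes, so $K=\tilde L_\mathcal{F}+\Gamma\tilde L_{\hat A}$. For the growth bound \eqref{eq:pre:gr}, we use (A5) for the first part of $F_1$ and Proposition \ref{R:lin;gr} for the data-driven part. This yields
\[
\|F_1(t,w)\|\le R_\mathcal{F}+\Gamma\bigl(\tilde L_{\hat A}\|w\|+\tilde L_{\hat A}\|w^\dagger\|+L_{\hat A}(\|v^\delta-v\|+C^{\hat A}_{\max})\bigr).
\]
Squaring and using $(a+b)^2\le2a^2+2b^2$, we get $\|F_1\|^2\le c_1(1+\|w\|^2)$. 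Also $\|F_2\|^2=|g^\delta(t)|^2\operatorname{tr}\mathcal{Q}\le\|g^\delta\|^2_{L^\infty}\operatorname{tr}\mathcal{Q}$. Hence $\bar K$ is finite, and it depends on $R_\mathcal{F},\Gamma,\tilde L_{\hat A},L_{\hat A},\|w^\dagger\|,\delta,C^{\hat A}_{\max}$ and $\operatorname{tr}\mathcal{Q}$.

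Theorem \ref{th:pre:well} then yields existence and uniqueness of the mild solution on $[0,T]$, together with a continuous modification and the moment bounds \eqref{eq:pre:MB1}. The only delicate point is that (A5) and (A6) are stated on $B_{2\rho}(\tilde w)$ while Theorem \ref{th:pre:well} needs global constants. This is exactly why the lemma assumes that Assumption \ref{Assum} holds for all $w,\bar w\in\mathcal{W}$, and the main step is checking that the $\lambda^\delta$-weighted data-driven term keeps uniform constants; the bound $\Gamma$ on $\lambda^\delta$ provides this. If $\lambda^\delta$ were chosen adaptively, as in Algorithm \ref{alg:MDP}, it would still be a deterministic function of $t$ because it depends only on expectations. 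Consequently the measurability and boundedness assumptions remain valid.
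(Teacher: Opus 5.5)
Your proposal is correct and is essentially the paper's proof. It uses the same choice $H=0$, $F_1$ and $F_2=g^{\delta}(t)$, the same Lipschitz bound from (A6) with $\lambda^{\delta}\le\Gamma$, and the same linear growth bound from (A5) and Proposition~\ref{R:lin;gr}. Your estimate $\|F_2\|^2_{\mathcal{L}_2(\mathcal{W}_{\mathcal{Q}},\mathcal{W})}\le\|g^{\delta}\|^2_{L^\infty}\operatorname{tr}\mathcal{Q}$ is, if anything, a cleaner use of the lemma's $L^\infty$ hypothesis than the paper's appeal to \eqref{eq:MDP:f;del}.

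You should drop your closing remark on adaptive weights. A $\lambda^{\delta}(t)$ defined through expectations of the solution is deterministic, but it is not known before the equation is solved. Then \eqref{D-STAR} becomes a distribution-dependent (McKean--Vlasov type) equation, and Theorem~\ref{th:pre:well} applies only after an additional fixed-point argument in $\lambda^{\delta}$. Keep $\lambda^{\delta}$ as a given measurable weight bounded by $\Gamma$, as the paper implicitly does.
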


\begin{proof}
    We proceed by verifying that \texttt{D-STAR} method \eqref{D-STAR} satisfies the assumptions of Theorem \ref{th:pre:well}, which  concludes the existence of its unique mild solution.
  Let us define
    \begin{align*}
    H=0, \quad  F_1(t,x^\delta)=   \mathcal{F}'(x^\delta)^* [v^{\delta} - \mathcal{F}(x^\delta) ] + \lambda^{\delta}(t) \hat{A}'(x^\delta)^* [v^{\delta}-\hat{A}(x^\delta)],
     \quad   \text{and} \quad
        F_2(t,x^\delta)=  g^{\delta}(t).
    \end{align*}
    Then, clearly $H=0$ is a generator of a $C_0$-semigroup and from (A1), (A3) of Assumption \ref{Assum} along with \eqref{def:f}, $F_1$ and $F_2$ are measurable maps .
    Furthermore, condition \eqref{eq:pre:lip} follows directly from (A6) of Assumption \ref{Assum} and for verification of condition \eqref{eq:pre:gr}, by using Proposition \ref{R:lin;gr} and (A5) of Assumption \ref{Assum} along with \eqref{noise:lev}, one obtains
    \begin{align*}
        \|F_1(t, x^\delta)&\,   \|^2 \leq   2 \|\mathcal{F}'(x^\delta)^* [v^{\delta} - \mathcal{F}(x^\delta) ] \|^2 + 2\lambda^{\delta}(t)^2 \|\hat{A}'(x^\delta)^* [v^{\delta}-\hat{A}(x^\delta)] \|^2
        \\
         \leq &\, 2{R}_\mathcal{F}^2 
          + 2\Gamma^2 \big(\tilde{L}_{\hat{A}} (  \|x^\delta\| +  \|{w^\dagger}\|) + L_{\hat{A}}( \|v^\delta-v \| +  C_{\max}^{\hat{A}}) \big)^2
          \\
          \leq &\, \big( 2{R}_\mathcal{F}^2 + 8 \Gamma^2 \tilde{L}_{\hat{A}}^2 \|{w^\dagger}\|^2 + 4 \Gamma^2L_{\hat{A}}^2(\|v^\delta-v \| +  C_{\max}^{\hat{A}})^2 \big) + 8 \Gamma^2 \tilde{L}_{\hat{A}}^2\|x^\delta\|^2
          \leq \bar{K}_1 (1+\|x^\delta\|^2)
    \end{align*}
where $\bar{K}_1 = \max\{2{R}_\mathcal{F}^2 + 8 \Gamma^2 \tilde{L}_{\hat{A}}^2 \|{w^\dagger}\|^2 + 4\Gamma^2L_{\hat{A}}^2(\delta  +  C_{\max}^{\hat{A}})^2, 8 \Gamma^2 \tilde{L}_{\hat{A}}^2\}$. Thus, by using  \eqref{eq:MDP:f;del}, $F_1(t,x^\delta)$ and $F_2(t,x^\delta)$ satisfies condition \eqref{eq:pre:gr}. Therefore, all the assumptions of Theorem \ref{th:pre:well} are fulfilled, which implies that \texttt{D-STAR} method \eqref{D-STAR} admits a unique mild solution and have finite moments satisfying \eqref{eq:pre:MB1}. Moreover, the solution satisfies
\begin{align}
    x^{\delta}(t) = &\, \tilde{w} + \int_0^t \big(\mathcal{F}' (x^{\delta}(s))^* [v^{\delta} - \mathcal{F}(x^{\delta}(s)) ] + \lambda^{\delta}(s) \hat{A}'(x^{\delta}(s))^* [v^{\delta}-\hat{A}(x^{\delta}(s))] \big)  ds + \int_0^t g^{\delta}(s) dB(s).  \label{SAR:sol} 
\end{align}
\end{proof}
\noindent
The following lemma establish that the map $t \mapsto \mathbb E\big[ \|x^{\delta}(t)- w^\dagger \|^2 \big]$ is monotonically non-increasing for $t < t^*$. This property is essential for proving the existence of finite stopping time $t^*$ associated with Rule \ref{rule:MDP}.

\begin{lemma}\label{lem:mono}
Suppose Assumption \ref{Assum} holds and $x^{\delta}(t)$ denotes the solution of \texttt{D-STAR} method~\eqref{D-STAR}. Let $t^*$ be the stopping time associated with Rule \ref{rule:MDP} and there exists a constant $K_{\lambda^{\delta}}$ such that
\begin{align}
    \lambda^{\delta}(t)  & \leq {K_{\lambda^{\delta}} \mathbb E \big[\|\mathcal{F}' (x^{\delta}(t))^* [v^{\delta}-\mathcal{F} (x^{\delta}(t))]\|^2 \big]}/\beta  \quad \text{and} \quad \mathcal{H}:= 2-  (2+\epsilon)\mu -\frac{2(\mu +1)}{\tau}-K_{\lambda^{\delta}} L_{\mathcal{F}}^2 > 0,  \label{as:lamb,del}
\end{align}
where $\beta = \delta^2+ (C_{\max}^{\hat{A}})^2 +3L_{\hat{A}}^2 \rho^2/2 $. 
Then, for every $t< t^*$, the following is true,
    \begin{align}
        \frac{d}{dt} \mathbb E \big[\|x^{\delta}(t)- w^\dagger \|^2 \big] \leq -  \mathcal{H} \mathbb E \big[\|v^{\delta}-\mathcal{F} (x^{\delta}(t))\|^2 \big] \quad \text{and} \quad x^{\delta}(t) \in B_{2\rho}(\tilde{w}). \label{eq:di x^n,x*}
    \end{align}
\end{lemma}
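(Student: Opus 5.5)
We apply It\^o's formula (Theorem \ref{th:pre;ito}) to $G(x)=\|x-w^\dagger\|^2$, for which $G_x=2(x-w^\dagger)$ and $G_{xx}=2I$, along the solution \eqref{SAR:sol} given by Lemma \ref{lem:well}. Taking expectations kills the martingale term by Theorem \ref{th:pre;stoc-int}. Differentiating in $t$ then yields
\begin{align*}
\frac{d}{dt}\mathbb E\big[\|x^\delta(t)-w^\dagger\|^2\big] = {}& 2\,\mathbb E\big[\langle x^\delta(t)-w^\dagger,\mathcal{F}'(x^\delta(t))^*[v^\delta-\mathcal{F}(x^\delta(t))]\rangle\big]\\
&+2\lambda^\delta(t)\,\mathbb E\big[\langle x^\delta(t)-w^\dagger,\hat{A}'(x^\delta(t))^*[v^\delta-\hat{A}(x^\delta(t))]\rangle\big] + |g^\delta(t)|^2\,\mathrm{tr}(\mathcal{Q}),
\end{align*}
with the trace normalized as in the paper (or absorbed into $h$). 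The argument runs as a continuity/bootstrap induction on $t<t^*$. Suppose $x^\delta(s)\in B_{2\rho}(\tilde w)$ on $[0,t]$, which holds at $t=0$ since $w^\dagger\in B_\rho(\tilde w)$. Then (A1)--(A4) may be used at time $t$. Once the derivative inequality is established, $\mathbb E\|x^\delta(t)-w^\dagger\|^2\le\|\tilde w-w^\dagger\|^2\le\rho^2$. The triangle inequality in the $L^2(\Omega)$ norm then gives $x^\delta(t)\in B_{2\rho}(\tilde w)$, and continuity of $t\mapsto\mathbb E\|x^\delta(t)-w^\dagger\|^2$ (from the moment bounds \eqref{eq:pre:MB1}) closes the induction.

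For the Landweber term, we write $\langle x-w^\dagger,\mathcal{F}'(x)^*(v^\delta-\mathcal{F}(x))\rangle=\langle \mathcal{F}'(x)(x-w^\dagger),v^\delta-\mathcal{F}(x)\rangle$. We then split $\mathcal{F}'(x)(x-w^\dagger)=-[v^\delta-\mathcal{F}(x)]+[v^\delta-v]-[\mathcal{F}(w^\dagger)-\mathcal{F}(x)-\mathcal{F}'(x)(w^\dagger-x)]$. The tangential cone condition (A2) and \eqref{noise:lev} bound the last bracket by $\mu(\|v^\delta-\mathcal{F}(x)\|+\delta)$. This gives
\[
2\,\mathbb E\langle\cdot\rangle\le -2(1-\mu)\,\mathbb E\|v^\delta-\mathcal{F}(x)\|^2+2(1+\mu)\delta\,\mathbb E\|v^\delta-\mathcal{F}(x)\|.
\]
For $t<t^*$ we have $\tau\delta<(\mathbb E\|v^\delta-\mathcal{F}(x)\|^2)^{1/2}$, and Jensen's inequality turns the second term into at most $\frac{2(1+\mu)}{\tau}\mathbb E\|v^\delta-\mathcal{F}(x)\|^2$. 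The diffusion contribution is handled by \eqref{eq:MDP:f;del}: $|g^\delta|^2<\mu\epsilon\,\mathbb E\|v^\delta-\mathcal{F}(x)\|^2$. Together these account for $2-(2+\epsilon)\mu-2(\mu+1)/\tau$ in $\mathcal{H}$.

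The main obstacle is the data-driven cross term, which must be absorbed into $-K_{\lambda^\delta}L_{\mathcal{F}}^2\,\mathbb E\|v^\delta-\mathcal{F}(x)\|^2$ using only the weight condition \eqref{as:lamb,del}. We plan to bound it by Cauchy--Schwarz and Young, using linearity of $\hat A$ (so $\hat A'=\hat A$):
\[
2\langle \hat A(x-w^\dagger),v^\delta-\hat A(x)\rangle\le 2\|\hat A(x-w^\dagger)\|\big(\|v^\delta-v\|+\|v-\hat A(w^\dagger)\|+\|\hat A(w^\dagger-x)\|\big).
\]
We would then apply Young's inequality with (A3), (A4), $\|v^\delta-v\|\le\delta$ and $\mathbb E\|x-w^\dagger\|^2\le\rho^2$ (from the induction hypothesis). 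The aim is an expected bound of the form $\beta=\delta^2+(C^{\hat A}_{\max})^2+\tfrac32L_{\hat A}^2\rho^2$, possibly up to an absolute constant, which we would fix by choosing the Young weights. This choice of $\beta$ is exactly what explains its form. Finally, \eqref{as:lamb,del} gives $\lambda^\delta\beta\le K_{\lambda^\delta}\mathbb E\|\mathcal{F}'(x)^*(v^\delta-\mathcal{F}(x))\|^2\le K_{\lambda^\delta}L_{\mathcal{F}}^2\,\mathbb E\|v^\delta-\mathcal{F}(x)\|^2$ by (A1). Summing the three contributions gives \eqref{eq:di x^n,x*}.
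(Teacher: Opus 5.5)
\textbf{Where you agree with the paper.} Your It\^o expansion, the tangential-cone estimate of the Landweber term, and your use of \eqref{eq:MDP:f;del} for the diffusion term follow the paper. Your splitting of $\mathcal F'(x)(x-w^\dagger)$ gives the linearization remainder the wrong sign, but this is harmless because only its norm is used. Your continuity argument for staying in $B_{2\rho}(\tilde w)$ is also more careful than the paper, which simply applies (A1)--(A2) along the trajectory.

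\textbf{The gap: the data-driven cross term.} This term is $\phi_{\hat A}:=2\lambda^\delta\,\mathbb E\langle x-w^\dagger,\hat A'(x)^*[v^\delta-\hat A(x)]\rangle$. You bound $2\langle \hat A(x-w^\dagger),v^\delta-\hat A(x)\rangle$ by $2\|\hat A(x-w^\dagger)\|\,\|v^\delta-\hat A(x)\|$, which throws away its dissipative part. After (A3), (A4), \eqref{noise:lev} and $\mathbb E\|x-w^\dagger\|^2\le\rho^2$, your chain ends at
\[
\phi_{\hat A}\le\lambda^\delta\big(2L_{\hat A}\rho(\delta+C_{\max}^{\hat A})+2L_{\hat A}^2\rho^2\big)\le 2\lambda^\delta\beta .
\]
The factor $2$ cannot be removed by choosing Young weights. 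The piece $2L_{\hat A}^2\rho^2$ alone already exceeds the $\tfrac{3}{2}L_{\hat A}^2\rho^2$ contained in $\beta$. For $\delta=C_{\max}^{\hat A}=L_{\hat A}\rho/2$, the middle expression equals exactly $2\beta$.

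Consequently, \eqref{as:lamb,del} and (A1) only give $\phi_{\hat A}\le 2K_{\lambda^\delta}L_{\mathcal F}^2\,\mathbb E\|v^\delta-\mathcal F(x)\|^2$. That yields the rate $\mathcal H-K_{\lambda^\delta}L_{\mathcal F}^2$ instead of $\mathcal H$. This rate need not be positive under the lemma's hypothesis $\mathcal H>0$. In that case neither the stated inequality nor the monotonicity your bootstrap relies on follows.

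\textbf{The missing step.} Keep the term $-\|v^\delta-\hat A(x)\|^2$. Since $\hat A$ is linear, $\hat A(x)-\hat A(w^\dagger)=(v^\delta-\hat A(w^\dagger))-(v^\delta-\hat A(x))$. Set $r=(\mathbb E\|v^\delta-\hat A(x)\|^2)^{1/2}$ and $a=\|v^\delta-\hat A(w^\dagger)\|$. Then
\[
\phi_{\hat A}=2\lambda^\delta\Big(-r^2+\mathbb E\big\langle v^\delta-\hat A(w^\dagger),v^\delta-\hat A(x)\big\rangle\Big)\le 2\lambda^\delta\,(-r^2+ar)\le\tfrac{\lambda^\delta}{2}\,a^2\le\lambda^\delta\big(\delta^2+(C_{\max}^{\hat A})^2\big)\le\lambda^\delta\beta .
\]
This is the paper's argument. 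The bound is independent of $x$, so no induction hypothesis is needed for this term. Then \eqref{as:lamb,del} with (A1) gives exactly $\phi_{\hat A}\le K_{\lambda^\delta}L_{\mathcal F}^2\,\mathbb E\|v^\delta-\mathcal F(x)\|^2$, and hence the constant $\mathcal H$.

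In particular, the $\tfrac{3}{2}L_{\hat A}^2\rho^2$ part of $\beta$ is pure slack in this lemma. It mirrors the $\psi_{\hat A}$ estimate in Theorem \ref{th:exa,conv}. So it does not explain the form of $\beta$ here, as you suggest.
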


\begin{proof}
   From \eqref{SAR:sol}, one can see that
   \begin{align*}
       x^{\delta}(t)- w^\dagger & = \tilde{w} -w^\dagger + \int_0^t \big(\mathcal{F}' (x^{\delta}(s))^* [v^{\delta} - \mathcal{F}(x^{\delta}(s)) ] + \lambda^{\delta}(s) \hat{A}'(x^{\delta}(s))^* [v^{\delta}-\hat{A}(x^{\delta}(s))] \big)ds
       + \int_0^t g^{\delta}(s) dB(s).
   \end{align*}
   Now, by applying It\^o\textquotesingle{s} formula given in Theorem \ref{th:pre;ito}, one obtains
   \begin{align*}
      & \|x^{\delta}(t)  - w^\dagger\|^2 =  \| \tilde{w} -w^\dagger\|^2 +  2 \int_0^t \big \langle x^{\delta}(s)- w^\dagger,\mathcal{F}' (x^{\delta}(s))^* [v^{\delta} - \mathcal{F}(x^{\delta}(s)) ] \big \rangle ds
       \\
       & \, + 2 \int_0^t \big \langle x^{\delta}(s)- w^\dagger, \lambda^{\delta}(s) \hat{A}'(x^{\delta}(s))^* [v^{\delta}-\hat{A}(x^{\delta}(s))] \big \rangle 
       ds   + 2 \int_0^t \big \langle x^{\delta}(s)- w^\dagger, g^{\delta}(s) dB(s) \big \rangle  + \int_0^t | g^{\delta}(s)|^2 ds
   \end{align*}
   which on taking expectation and applying the martingale property of stochastic integral given in Theorem \ref{th:pre;stoc-int} yields
   \begin{align}
       \mathbb E \big[\|x^{\delta}(t)- &\, w^\dagger \|^2 \big] =  \mathbb E \big[\| \tilde{w} -w^\dagger\|^2 \big] +  2\mathbb E\int_0^t \big[ \big \langle x^{\delta}(s)- w^\dagger,\mathcal{F}' (x^{\delta}(s))^* [v^{\delta} - \mathcal{F}(x^{\delta}(s)) ] \big \rangle \big] ds \notag 
       \\
       & +  2\mathbb E\int_0^t \big[ \big \langle x^{\delta}(s)- w^\dagger, \lambda^{\delta}(s) \hat{A}'(x^{\delta}(s))^* [v^{\delta}-\hat{A}(x^{\delta}(s))] \big \rangle \big] ds  +  \int_0^t |g^{\delta}(s)|^2 ds. \label{eq:Ito:di x^n,x*}
   \end{align}
     By using  fundamental theorem of calculus,
    \begin{align}
        \frac{d}{dt} &\, \mathbb E \big[\|  x^{\delta}(t)- w^\dagger \|^2 \big] =  2 \mathbb E \big[ \big \langle x^{\delta}(t)- w^\dagger,\mathcal{F}' (x^{\delta}(t))^* [v^{\delta} - \mathcal{F}(x^{\delta}(t)) ] \big \rangle \big] \notag
        \\
        &   + 2 \lambda^{\delta}(t) \mathbb E \big[ \big \langle x^{\delta}(t)- w^\dagger,  \hat{A}'(x^{\delta}(t))^* [v^{\delta}-\hat{A}(x^{\delta}(t))] \big \rangle \big] +  |g^{\delta}(t)|^2 
        :=  \phi_{\mathcal{F}} + \phi_{\hat{A}} +  |g^{\delta}(t)|^2, \label{eq:mon:der:dif}
    \end{align}
     where $\phi_{\mathcal{F}}$ and $ \phi_{\hat{A}}$ denote the terms associated with $\mathcal{F}$ and $\hat{A}$, respectively. Thus, for the estimation of $\phi_{\mathcal{F}}$, notice that
\begin{align*}
     \phi_{\mathcal{F}} :=&\, 2 \mathbb E \big[ \big \langle x^{\delta}(t)- w^\dagger,\mathcal{F}' (x^{\delta}(t))^* [v^{\delta} - \mathcal{F}(x^{\delta}(t)) ] \big \rangle \big] =
      2\mathbb E \big[ \big \langle\mathcal{F}' (x^{\delta}(t)) (x^{\delta}(t)- w^\dagger), v^{\delta} - \mathcal{F}(x^{\delta}(t)) \big \rangle \big]
    \\
     = &\, 2\mathbb E \big[ \big \langle \mathcal{F}(w^\dagger) - \mathcal{F}(x^{\delta}(t)) -\mathcal{F}' (x^{\delta}(t)) (w^\dagger-x^{\delta}(t)), v^{\delta} - \mathcal{F}(x^{\delta}(t)) \big \rangle \big] 
    \\
    & + 2\mathbb E \big[ \big \langle \mathcal{F}(x^{\delta}(t))-\mathcal{F}(w^\dagger),v^{\delta} - \mathcal{F}(x^{\delta}(t)) \big \rangle \big]
\end{align*}
which on using (A2) of Assumption \ref{Assum} and Cauchy-Schwarz inequality  yields
\begin{align}
    \phi_{\mathcal{F}} \leq &\, 2\mu \mathbb E \big[\|\mathcal{F}(w^\dagger) - \mathcal{F}(x^{\delta}(t))\| \|v^{\delta} - \mathcal{F}(x^{\delta}(t)) \| \big] + 2\mathbb E \big[ \big \langle v^{\delta} -v  ,v^{\delta} - \mathcal{F}(x^{\delta}(t)) \big \rangle \big]  - 2 \mathbb E \big[\|v^{\delta} - \mathcal{F}(x^{\delta}(t)) \|^2 \big] \notag
    \\
     \leq &\, 2 \mu \mathbb E  \Big [\|v^{\delta} - \mathcal{F}(x^{\delta}(t)) \| \big[ \|v^{\delta} - \mathcal{F}(x^{\delta}(t))\| + \|v- v^{\delta}\| \big] \Big] + 2 \mathbb E \big[ \| v^{\delta} -v\|  \|v^{\delta} - \mathcal{F}(x^{\delta}(t))\| \big] \notag
     \\
     & - 2\mathbb E \big[\|v^{\delta} - \mathcal{F}(x^{\delta}(t)) \|^2 \big] \notag
     \\
      \leq &\, 2(\mu-1) \mathbb E \big[ \|v^{\delta} - \mathcal{F}(x^{\delta}(t)) \|^2 \big] + 2(\mu+1) \mathbb E \big[ \|v^\delta -v\| \|v^{\delta} - \mathcal{F}(x^{\delta}(t)) \| \big] . \label{eq:phi-F}
\end{align}
Furthermore, for the estimation of $\phi_{\hat{A}}$, using the linear property of $\hat{A}$ and Jensen\textquotesingle{s} inequality, one can observe that
\begin{align}
    \phi_{\hat{A}} := &\, 2 \lambda^{\delta}(t) \mathbb E \big[ \big \langle x^{\delta}(t)- w^\dagger,  \hat{A}'(x^{\delta}(t))^* [v^{\delta}-\hat{A}(x^{\delta}(t))] \big \rangle \big] \notag
    \\
    = &\, 2 \lambda^{\delta}(t) \mathbb E \big[ \big \langle \hat{A}(x^{\delta}(t)) - \hat{A}(w^\dagger), v^{\delta}-\hat{A}(x^{\delta}(t)) \big \rangle \big] \notag
    \\
    \leq &\,  2 \lambda^{\delta}(t) \Big[ -\mathbb E \big[ \|v^{\delta}-\hat{A}(x^{\delta}(t)) \|^2 \big] + \mathbb E \big[\|v^{\delta}-\hat{A}(w^\dagger)\| \|v^{\delta}-\hat{A}(x^{\delta}(t))\| \big] \Big] \notag
    \\
    \leq &\, 2 \lambda^{\delta}(t)  \Big[ -\mathbb E \big[\|v^{\delta}-\hat{A}(x^{\delta}(t)) \|^2 \big] + \|v^{\delta}-\hat{A}(w^\dagger)\| \, \big(\mathbb E \big[  \|v^{\delta}-\hat{A}(x^{\delta}(t)) \|^2 \big] \big)^{1/2} \Big].\label{eq:phi-A*}
\end{align}
By elementary inequality $-r^2+ar = -(r^2-ar+ a^2/4)+ a^2/4 \leq a^2/4$ for all  $a,r\geq 0$, along with \eqref{noise:lev} and (A4) of Assumption \ref{Assum},
\begin{align}
    \phi_{\hat{A}} 
     \leq &\, 2 \lambda^{\delta}(t) \frac{1}{4} \|v^{\delta}-\hat{A}(w^\dagger)\|^2 \leq  \lambda^{\delta}(t) \big[\|v^\delta-v\|^2+ \|v-\hat{A}(w^\dagger)\|^2  \big] \leq \lambda^{\delta}(t)\big(\delta^2  +(C_{\max}^{\hat{A}})^2 \big) \label{eq:phi-A-int}
\end{align}
which on using \eqref{as:lamb,del} and (A1) of Assumption \ref{Assum}, yields
\begin{align}
   \phi_{\hat{A}} \leq &\,  K_{\lambda^{\delta}} L_{\mathcal{F}}^2 \mathbb E \big[ \|v^{\delta}-\mathcal{F} (x^{\delta}(t))\|^2 \big]. \label{eq:phi-A} 
\end{align}
Therefore, on substituting \eqref{eq:phi-F} and \eqref{eq:phi-A} into \eqref{eq:mon:der:dif}, then using \eqref{noise:lev}, \eqref{eq:MDP:f;del} and Jensen\textquotesingle{s} inequality, one obtains
\begin{align}
     \frac{d}{dt} \mathbb E &\, \big[\|x^{\delta}(t)- w^\dagger \|^2 \big] \leq  2(\mu-1) \mathbb E \big[ \|v^{\delta} - \mathcal{F}(x^{\delta}(t)) \|^2 \big] + 2 \delta (\mu+1) \mathbb E \big[\|v^{\delta} - \mathcal{F}(x^{\delta}(t)) \| \big]  \notag
     \\
     &  + K_{\lambda^{\delta}} L_{\mathcal{F}}^2 \mathbb E \big[ \|v^{\delta}-\mathcal{F} (x^{\delta}(t))\|^2 \big] +  |g^{\delta}(t)|^2 \notag
     \\
    \leq &\, \big( (2+\epsilon)\mu-2 +  K_{\lambda^{\delta}} L_{\mathcal{F}}^2 \big) \mathbb E \big[ \|v^{\delta} - \mathcal{F}(x^{\delta}(t)) \|^2 \big] + {2\delta (\mu+1)} \big(\mathbb E  \big[\|v^{\delta} - \mathcal{F}(x^{\delta}(t)) \|^2 \big ] \big)^{1/2} . \label{eq:der,dif,xn,exa}
\end{align}
Thus, 
\begin{align*}
    \frac{d}{dt} \mathbb E \big[\| &\, x^{\delta}(t)  - w^\dagger \|^2 \big] \leq   \big( (2+\epsilon)\mu-2 +  K_{\lambda^{\delta}}L_{\mathcal{F}}^2 \big) \mathbb E \big[ \|v^{\delta} - \mathcal{F}(x^{\delta}(t)) \|^2 \big] + \frac{2(\mu+1)}{\tau} \mathbb E \big[ \|v^{\delta} - \mathcal{F}(x^{\delta}(t)) \|^2 \big]
    \\
    = &\, - \Big( 2-(2+\epsilon)\mu -  K_{\lambda^{\delta}} L_{\mathcal{F}}^2 -\frac{2(\mu +1)}{\tau} \Big) \mathbb E \big[\|v^{\delta} - \mathcal{F}(x^{\delta}(t)) \|^2 \big] 
    =  - \mathcal{H}\mathbb E \big[\|v^{\delta} - \mathcal{F}(x^{\delta}(t)) \|^2 \big] \qedhere
\end{align*}
which implies that $\mathbb E \big[\| x^{\delta}(t)  - w^\dagger \|^2 \leq \| \tilde{w} - w^\dagger \|^2 \leq \rho^2$ as $\mathcal{H}>0$. Thus, $x^{\delta}(t) \in B_{\rho}(w^\dagger)$ for all $t< t^*$. Therefore one obtains $\mathbb E \big[\|w^\delta(t) -\tilde{w} \|^2\big] \leq 2\mathbb E \big[\|w^\delta(t)-w^\dagger\|^2 \big] + 2\|w^\dagger-\tilde{w}\|^2 \leq 4 \rho^2$, which implies that $x^{\delta}(t) \in B_{2\rho}(\tilde{w})$ for all $t< t^*$. This completes the proof.
\end{proof}

\noindent
Next, we prove the existence of a finite stopping time $t^*$ associated with Rule \ref{rule:MDP}.
\begin{theorem}\label{th:uni,t}
  Let assumptions of Lemma \ref{lem:mono} be satisfied and $x^{\delta}(t)$ denotes the solution of \texttt{D-STAR} method~\eqref{D-STAR}. Then, there exists a finite $t^*$ such that
    \begin{align*}
        \mathbb E \big[\|v^{\delta}-\mathcal{F}(x^{\delta}(t^*))\|^2 \big] \leq \tau^2 \delta^2  <  \mathbb E \big[\|v^{\delta}-\mathcal{F}(x^{\delta}(t)) \|^2 \big] \quad \text{for all}~ t < t^*.
    \end{align*}
\end{theorem}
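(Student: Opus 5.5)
We argue by contradiction, with Lemma \ref{lem:mono} as the main tool. Write $r(t):=\mathbb E\big[\|v^{\delta}-\mathcal{F}(x^{\delta}(t))\|^2\big]$, so that $t^*=\inf\{t>0:\hat G(t)\le 0\}$ with $\hat G(t)=r(t)-\tau^2\delta^2$. Suppose $t^*=\infty$, i.e.\ $r(t)>\tau^2\delta^2$ for every $t>0$. Then \eqref{eq:di x^n,x*} holds for all $t>0$. Integrating it over $[0,T]$ via the integral identity \eqref{eq:Ito:di x^n,x*} gives
\begin{align*}
\mathcal{H}\int_0^T r(s)\,ds \le \|\tilde w-w^\dagger\|^2-\mathbb E\big[\|x^{\delta}(T)-w^\dagger\|^2\big]\le \rho^2 .
\end{align*}
The left-hand side, however, exceeds $\mathcal{H}\tau^2\delta^2 T$. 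Since $\mathcal{H}>0$ and $\delta>0$, this forces $T\le \rho^2/(\mathcal{H}\tau^2\delta^2)$ for every $T$, which is impossible. Hence $t^*<\infty$, and indeed $t^*\le \rho^2/(\mathcal{H}\tau^2\delta^2)$.

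Next we identify $t^*$ as a point where the bound is attained. We need $t\mapsto r(t)$ to be continuous. This should follow from the continuity of $\mathcal{F}$ (A1) together with the mean-square continuity of the solution $x^{\delta}$. That continuity comes from \eqref{SAR:sol}: the drift term is Lipschitz in $t$ in mean square by (A5) and the linear growth bound of Lemma \ref{lem:well}, while the It\^o isometry of Theorem \ref{th:pre;stoc-int} gives $\mathbb E\|\int_s^t g^\delta dB\|^2=\int_s^t|g^\delta|^2\operatorname{tr}\mathcal Q\to0$. To pass $r$ through the limit we use uniform integrability, via the moment bounds \eqref{eq:pre:MB1} with $p=4$. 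One more ingredient is a bound of the form $\|\mathcal F(w)-\mathcal F(w^\dagger)\|\le \frac{1}{1-\mu}L_{\mathcal F}\|w-w^\dagger\|$ on $B_{2\rho}(\tilde w)$, which is a consequence of (A1)--(A2).

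With continuity in hand, the infimum defining $t^*$ is attained in the limit, so $r(t^*)\le\tau^2\delta^2$. By the definition of the infimum, $r(t)>\tau^2\delta^2$ for all $0<t<t^*$. At $t=0$ we must also have $r(0)>\tau^2\delta^2$, since otherwise there is nothing to prove (trivially $t^*=0$). We therefore assume $\|v^\delta-\mathcal F(\tilde w)\|>\tau\delta$, as is standard. Together these give exactly the two inequalities in the statement.

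The main obstacle is this continuity step. It requires $x^\delta(t)$ to remain in the region where (A1)--(A2) hold, i.e.\ $B_{2\rho}(\tilde w)$, which Lemma \ref{lem:mono} provides for $t<t^*$ and, by continuity, at $t^*$ as well. It also requires controlling $\mathcal{F}(x^\delta(t))$ in $L^2(\Omega)$, which we handle with the local Lipschitz-type estimate and dominated convergence described above.
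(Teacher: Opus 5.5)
Your proof is correct, and its core is the same as the paper's. You assume $t^*=\infty$, integrate the estimate of Lemma~\ref{lem:mono} to get $\mathcal{H}\int_0^T \mathbb E\big[\|v^{\delta}-\mathcal{F}(x^{\delta}(s))\|^2\big]\,ds\le\|\tilde w-w^\dagger\|^2$, and contradict the lower bound $\mathcal{H}\tau^2\delta^2 T$; your finite-$T$ version also gives the explicit bound $t^*\le\rho^2/(\mathcal{H}\tau^2\delta^2)$. The paper stops once $t^*$ is shown to be finite and takes $\mathbb E\big[\|v^{\delta}-\mathcal{F}(x^{\delta}(t^*))\|^2\big]\le\tau^2\delta^2$ for granted, so your continuity argument (mean-square continuity of $x^{\delta}$ plus $\|\mathcal{F}(w)-\mathcal{F}(\bar w)\|\le L_{\mathcal{F}}\|w-\bar w\|/(1-\mu)$ from (A1)--(A2)) and your treatment of $t=0$ fill in steps the paper leaves implicit.
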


\begin{proof}
   Suppose there does not exists any finite $t^*$, then for all $t \geq 0$, one gets
 \(
       \mathbb E \big[\|v^{\delta} -\mathcal{F}(x^{\delta}(t))\|^2 \big] > \tau^2 \delta^2. 
\)
   Now, from Lemma \ref{lem:mono},
   \begin{align*}
        \frac{d}{dt} \mathbb E \big[\|x^{\delta}(t)- w^\dagger \|^2 \big] \leq - \Big( 2-(2+\epsilon)\mu  - K_{\lambda^{\delta}} L_{\mathcal{F}}^2 -\frac{2(\mu +1)}{\tau} \Big) \mathbb E \big[\|v^{\delta}-\mathcal{F} (x^{\delta}(t))\|^2 \big]
    \end{align*}
   which on integration gives
    \begin{align*}
        \int_0^\infty \mathbb E \big[\|v^{\delta}-\mathcal{F} (x^{\delta}(t))\|^2 \big] dt \leq &\, -\frac{\tau}{\tau \big(2-(2+\epsilon)\mu  - K_{\lambda^{\delta}} L_{\mathcal{F}}^2 \big) - 2(\mu +1) } \int_0^\infty \frac{d}{dt} \mathbb E \big[ \|x^{\delta}(t)- w^\dagger \|^2 \big] dt.
    \end{align*}
   By fundamental theorem of calculus, 
    \begin{align*}
       \infty =\int_0^\infty \tau^2 \delta^2 dt < \int_0^\infty \mathbb E \big[\|v^{\delta}-\mathcal{F} (x^{\delta}(t))\|^2 \big] dt \leq \frac{\tau}{\tau \big(2-(2+\epsilon)\mu  - K_{\lambda^{\delta}} L_{\mathcal{F}}^2 \big) - 2(\mu +1) }  \|\tilde{w}-w^\dagger\|^2 < \infty
    \end{align*}
    which leads to a contradiction. Hence, the existence of  finite $t^*$ is guaranteed.
\end{proof}


\subsection{Convergence for exact data} \label{subsec:ex:conv}
This subsection demonstrates the convergence of \texttt{D-STAR} method~\eqref{D-STAR} for exact data $v$. In this setting, the stochastic term vanishes and method reduces to its deterministic counterpart. Consequently, the expected squared \(L^2\)-norm coincides with the corresponding deterministic squared \(L^2\)-norm. The resulting pseudo-code is presented in Algorithm~\ref{alg:ex_data}. 

\begin{algorithm}
\caption{\texttt{D-STAR} with exact data. }\label{alg:ex_data}
 \begin{description}[leftmargin=*, labelindent=0pt, style=sameline]
\item[\textbf{Input}]
Forward operator $\mathcal{F}$, exact data $v$, parmeter $\gamma$ and initial value $\tilde{w}$.

\item[\textbf{Training}]
Construct $\widehat{A}$ using training data
$(w_i,\mathcal{F}(w_i))_{1\leq i\leq m}$ such that
$\widehat{A}(w_i):= \mathcal{F}(w_i)$.

\item[\textbf{Repeat}]

\begin{enumerate}
        \item For $t \geq 0$, choose a data-driven parameter $\lambda(t) \leq R_{\mathcal{F}}^2/\gamma$ such that 
\begin{align*}
    \lambda(t)  \leq {K_{\lambda} \mathbb E \big[\|\mathcal{F}' (w(t))^* [v-\mathcal{F} (w(t))]\|^2 \big]}/\gamma \quad \text{and} \quad \mathcal{H}_1:=
    2(1- \mu) - K_{\lambda} L_{\mathcal{F}}^2 >0.
\end{align*}
     \item Solve:    
\(
    dw(t) = \big(\mathcal{F}' (w(t))^* [v - \mathcal{F}(w(t)) ] + \lambda(t) \hat{A}'(w(t))^* [v -\hat{A}(w(t))] \big)  dt.
\)
    \end{enumerate}
\end{description}
\end{algorithm}

\noindent
Now, we first establish the following lemma.
\begin{lemma}\label{lem:exa:mono}
  Let Assumption \ref{Assum} be satisfied and $w(t)$ denotes the solution of \texttt{D-STAR} method \eqref{D-STAR} with exact data $v$. Assume there exists a constant $K_{\lambda}$
 such that
\vspace{0.1cm}
\begin{align}\label{eq:lam:ex}
    \lambda(t) \leq {K_{\lambda}  \mathbb E  \big[\|\mathcal{F}' (w(t))^* [v-\mathcal{F} (w(t))]\|^2 \big]}/ \gamma \quad \text{and} \quad 
    \mathcal{H}_1 = 2(1 - \mu) - K_{\lambda}L_{\mathcal{F}}^2  >0, 
\end{align}
where $\gamma=  (C_{\max}^{\hat{A}})^2 +3L_{\hat{A}}^2 \rho^2 /2  $.
Then for every $t\geq 0$, 
    \begin{align}\label{eq:di:ex,x^n,x*}
        \frac{d}{dt} \mathbb E \big[\|w(t)- w^\dagger \|^2 \big] \leq  -     \mathcal{H}_1\mathbb E \big[ \|v-\mathcal{F} (w(t))\|^2 \big] \quad \text{and} \quad
        \int_0^\infty\mathbb E \big[ \|v  -\mathcal{F} (w(t))\|^2 \big]  dt  < \infty. 
    \end{align}
\end{lemma}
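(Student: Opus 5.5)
The plan is to copy the proof of Lemma \ref{lem:mono}, now with $\delta=0$ and no diffusion term. With exact data $v$ the stochastic term disappears, so $w(t)$ solves the deterministic ODE of Algorithm \ref{alg:ex_data}. Its well-posedness follows as in Lemma \ref{lem:well}, with $g\equiv 0$. I would differentiate $\|w(t)-w^\dagger\|^2$ directly (equivalently, apply It\^o's formula of Theorem \ref{th:pre;ito} with $\psi=0$):
\begin{align*}
\frac{d}{dt}\mathbb E\big[\|w(t)-w^\dagger\|^2\big] = 2\mathbb E\big[\langle w(t)-w^\dagger,\mathcal{F}'(w(t))^*[v-\mathcal{F}(w(t))]\rangle\big] + 2\lambda(t)\mathbb E\big[\langle w(t)-w^\dagger,\hat A'(w(t))^*[v-\hat A(w(t))]\rangle\big] =: \phi_{\mathcal F}+\phi_{\hat A}.
\end{align*}

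For $\phi_{\mathcal F}$ I would write $v=\mathcal F(w^\dagger)$ and add and subtract $\mathcal F(w(t))$. The tangential cone condition (A2) then gives
\begin{align*}
\phi_{\mathcal F}\le 2(\mu-1)\mathbb E\big[\|v-\mathcal F(w(t))\|^2\big].
\end{align*}
This is exactly \eqref{eq:phi-F} with $v^\delta=v$, where the cross term $2(\mu+1)\|v^\delta-v\|\,\|\cdot\|$ vanishes. For $\phi_{\hat A}$ I would use linearity of $\hat A$ to rewrite the inner product as $\langle \hat A(w(t))-\hat A(w^\dagger), v-\hat A(w(t))\rangle$. Cauchy--Schwarz, Jensen and the bound $-r^2+ar\le a^2/4$ then give
\begin{align*}
\phi_{\hat A}\le \tfrac{1}{2}\lambda(t)\|v-\hat A(w^\dagger)\|^2\le \lambda(t)(C^{\hat A}_{\max})^2\le \lambda(t)\gamma,
\end{align*}
using (A4). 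Then \eqref{eq:lam:ex} and (A1), via $\|\mathcal F'(w)^*r\|\le L_{\mathcal F}\|r\|$, give $\phi_{\hat A}\le K_\lambda L_{\mathcal F}^2\,\mathbb E[\|v-\mathcal F(w(t))\|^2]$. Adding the two estimates yields the differential inequality with constant $\mathcal H_1=2(1-\mu)-K_\lambda L_{\mathcal F}^2$.

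I expect the main obstacle to be justifying that $w(t)$ stays in $B_{2\rho}(\tilde w)$ for all $t\ge0$, since (A1), (A2) and (A4) are only assumed there. I would handle this with a continuation argument. Let $\bar t=\sup\{s:\ w(r)\in B_{2\rho}(\tilde w)\ \forall r\le s\}$. On $[0,\bar t)$ the inequality above holds, so $\mathbb E\|w(t)-w^\dagger\|^2\le\|\tilde w-w^\dagger\|^2\le\rho^2$ there. Hence $\mathbb E\|w(t)-\tilde w\|^2\le 2\rho^2+2\rho^2\le 4\rho^2$, as at the end of Lemma \ref{lem:mono}, with strict room available if $w^\dagger$ lies strictly inside. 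Continuity of $t\mapsto w(t)$ then forces $\bar t=\infty$.

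Finally, for the integrability claim I would integrate the differential inequality over $[0,T]$ and use $\mathcal H_1>0$ together with nonnegativity of the norm:
\begin{align*}
\mathcal H_1\int_0^T\mathbb E\big[\|v-\mathcal F(w(t))\|^2\big]\,dt\le \|\tilde w-w^\dagger\|^2-\mathbb E\big[\|w(T)-w^\dagger\|^2\big]\le\rho^2.
\end{align*}
Letting $T\to\infty$ by monotone convergence gives $\int_0^\infty \mathbb E[\|v-\mathcal F(w(t))\|^2]\,dt\le \rho^2/\mathcal H_1<\infty$.
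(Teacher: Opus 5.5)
Your proposal is correct and follows the paper's route: the paper's proof only says the result follows by the argument of Lemma \ref{lem:mono} (here with $\delta=0$ and $g\equiv 0$, so the cross term $2(\mu+1)\|v^\delta-v\|\,\|\cdot\|$ and the diffusion term drop out) together with the fundamental theorem of calculus, which is exactly your estimate of $\phi_{\mathcal F}+\phi_{\hat A}$ followed by integration over $[0,T]$. Your continuation argument for keeping $w(t)$ in $B_{2\rho}(\tilde w)$ is more careful than the paper, which simply asserts this as at the end of Lemma \ref{lem:mono}. One small caveat, which you already flag: in the borderline case $\|\tilde w-w^\dagger\|=\rho$ the bound $4\rho^2$ has no slack, so continuity of $w$ alone does not close the argument, and there you would need a slightly finer step or the assumptions on a marginally larger ball.
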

\begin{proof}
    The proof follows by the arguments analogous to Lemma \ref{lem:mono} and fundamental theorem of calculus.
\end{proof}
\noindent
Next, we derive the following convergence theorem for the exact data case.

\begin{theorem}\label{th:exa,conv}
Suppose assumptions of Lemma \ref{lem:exa:mono} holds and $w(t)$ denotes the solution of \texttt{D-STAR} method \eqref{D-STAR} with the exact data $v$. Further, assume that $w^\dagger$ denotes a solution of \eqref{IP}. Then, 
    \begin{align*}
       \lim_{t \to \infty} \mathbb E \big[ \|w(t)-w^\dagger\|^2 \big] =0.
    \end{align*}
\end{theorem}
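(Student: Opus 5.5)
We follow the classical Hanke–Neubauer–Scherzer argument for Landweber iteration, adapted to the continuous-time flow with the data-driven drift. In the exact data case the diffusion term vanishes, so $w(t)$ is deterministic and the expectations reduce to ordinary norms.

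\textbf{Step 1: a limit exists along the error.} By Lemma \ref{lem:exa:mono}, $t\mapsto \|w(t)-w^\dagger\|^2$ is nonincreasing. Since $w^\dagger$ is an arbitrary solution of \eqref{IP} in $B_\rho(\tilde w)$, the same holds for every solution $\hat w$ in the relevant ball; the term $\phi_{\hat A}$ is bounded exactly as before, with $\hat w$ in place of $w^\dagger$. Hence for each such $\hat w$ the limit $\varepsilon(\hat w):=\lim_{t\to\infty}\|w(t)-\hat w\|\ge 0$ exists. Moreover $\int_0^\infty \|v-\mathcal F(w(t))\|^2dt<\infty$, so there is a sequence $t_k\to\infty$ along which $\|v-\mathcal F(w(t_k))\|\to 0$. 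A refinement is to choose $t_k$ with
\[
\|v-\mathcal F(w(t_k))\|\le \|v-\mathcal F(w(s))\| \quad\text{for all } s\le t_k,
\]
which is possible because the residual is not integrable against $ds$ unless it tends to zero along such record times.

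\textbf{Step 2: Cauchy property.} For $s<t$ we write $e(t)=w(t)-w^\dagger$ and choose $r\in[s,t]$ minimizing the residual on $[s,t]$. Then
\[
\|w(t)-w(s)\|\le \|e(t)-e(r)\|+\|e(r)-e(s)\|,
\]
and
\[
\|e(t)-e(r)\|^2=2\langle e(r)-e(t),e(r)\rangle+\|e(t)\|^2-\|e(r)\|^2 .
\]
The last two terms tend to $\varepsilon^2-\varepsilon^2=0$. For the inner product we write
\[
e(r)-e(t)=-\int_r^t\Big(\mathcal F'(w)^*[v-\mathcal F(w)]+\lambda\,\hat A^*[v-\hat A w]\Big)\,d\sigma .
\]
Using (A2), the Landweber part is bounded by
\[
(1+\mu)\int_r^t \|v-\mathcal F(w(\sigma))\|\,\big(\|v-\mathcal F(w(r))\|+\|v-\mathcal F(w(\sigma))\|\big)\,d\sigma
\le 3(1+\mu)\int_r^t\|v-\mathcal F(w(\sigma))\|^2d\sigma ,
\]
where the last inequality uses minimality at $r$ and the bound $\|\mathcal F(w(r))-\mathcal F(w(\sigma))\|\le 2\|\text{residual}\|$. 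This tail of a convergent integral tends to $0$.

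The data-driven part is $\int_r^t\lambda\langle \hat A e(\sigma),\,v-\hat A w(\sigma)\rangle d\sigma$. We bound it crudely by
\[
\int_r^t \lambda(\sigma)\, L_{\hat A}\big(2\rho L_{\hat A}+C_{\max}^{\hat A}+\ldots\big)\,d\sigma .
\]
Then \eqref{eq:lam:ex} gives $\lambda(\sigma)\le K_\lambda L_{\mathcal F}^2\|v-\mathcal F(w(\sigma))\|^2/\gamma$, so this term is also a tail of a convergent integral and tends to $0$. The term $\|e(r)-e(s)\|$ is handled symmetrically. Thus $w(t)$ is Cauchy as $t\to\infty$ and converges to some $w^*\in B_{2\rho}(\tilde w)$.

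\textbf{Step 3: identifying the limit.} By continuity of $\mathcal F$ and $\|v-\mathcal F(w(t_k))\|\to0$, the limit satisfies $\mathcal F(w^*)=v$. Hence $w^*$ is a solution. Applying Step 1 with $\hat w=w^*$ gives $\lim_t\|w(t)-w^*\|=0$. The statement then holds with $w^\dagger$ understood as this solution. If $w^\dagger$ is the unique solution in $B_{\rho}(\tilde w)$ (or the $\tilde w$-minimum-norm one, via the standard null-space argument $w(t)-\tilde w\in\overline{\mathcal R(\mathcal F'(w^\dagger)^*)}$ when $\hat A$'s range terms are compatible), then $w^*=w^\dagger$.

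The \textbf{main obstacle} is Step 2, specifically controlling the cross term coming from the data-driven drift. Our plan is to use the pointwise bound on $\lambda$ from \eqref{eq:lam:ex} together with (A1) to convert this term into an integrable residual tail, and to exploit the boundedness of $\hat A$ on $B_{2\rho}(\tilde w)$. A secondary subtlety is that Lemma \ref{lem:exa:mono} must hold with $w^\dagger$ replaced by an arbitrary solution; this requires the constant $\gamma$ to be uniform over solutions in the ball, and (A4) is stated only at $w^\dagger$. We would therefore bound $\|v-\hat A\hat w\|\le C_{\max}^{\hat A}+L_{\hat A}\|\hat w-w^\dagger\|$, which is absorbed into the $3L_{\hat A}^2\rho^2/2$ term of $\gamma$.
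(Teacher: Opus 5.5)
Your proof is correct and follows the same Cauchy-sequence strategy as the paper. The common ingredients are the identity $\|e(t)-e(r)\|^2=\|e(t)\|^2-\|e(r)\|^2+2\langle e(r)-e(t),e(r)\rangle$, monotonicity of $\|w(t)-w^\dagger\|$ from Lemma~\ref{lem:exa:mono}, (A2) for the Landweber cross term, and the pointwise bound \eqref{eq:lam:ex} on $\lambda$. That bound turns the data-driven cross term into a tail of $\int_0^\infty\|v-\mathcal F(w(\sigma))\|^2\,d\sigma$; the paper tracks the constant as $K_\lambda L_{\mathcal F}^2$, and your cruder constant works just as well.

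Where you deviate, you repair the paper's argument rather than change it. The paper pivots at the later endpoint $r>t$ and still claims the factor $3(\mu+1)$. That tacitly uses $\|v-\mathcal F(w(r))\|\le\|v-\mathcal F(w(s))\|$ for $s\in[t,r]$. This residual monotonicity holds for the pure Showalter flow, but once the drift contains $\lambda\hat A^*[v-\hat A w]$, the derivative of $\tfrac12\|v-\mathcal F(w)\|^2$ picks up a term of indefinite sign, and the paper never proves it. Your choice of $r$ as a residual minimizer on $[s,t]$, with the split $\|w(t)-w(s)\|\le\|e(t)-e(r)\|+\|e(r)-e(s)\|$, is exactly the Hanke--Neubauer--Scherzer device that makes the bound legitimate.

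Likewise, the paper ends with ``Cauchy, hence converges to $w^\dagger$'' without identifying the limit. Your Step~3 supplies this step. You are also right that $w^*=w^\dagger$ needs uniqueness: for linear $\mathcal F$ with nontrivial kernel and $\lambda\equiv0$ (allowed by \eqref{eq:lam:ex}), the flow converges to the solution nearest $\tilde w$, not to an arbitrary solution in $B_\rho(\tilde w)$.

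Two pieces can be deleted. First, extending Step~1 to arbitrary solutions $\hat w$ is unnecessary, because $\|w(t)-w^*\|\to0$ is immediate once Step~2 gives $w(t)\to w^*$. In any case the claimed absorption into $\gamma$ is not accurate, since $\tfrac12(C_{\max}^{\hat A}+2L_{\hat A}\rho)^2$ can exceed $\gamma$. Second, the record-time choice of $t_k$ is unnecessary: any sequence with vanishing residual, which exists by integrability, already gives $\mathcal F(w^*)=v$.

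Finally, the data-driven cross term pairs $v-\hat A w(\sigma)$ with $\hat A e(r)$, not $\hat A e(\sigma)$. Your uniform bound still covers it because $\|e(r)\|\le\rho$.
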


\begin{proof}
Let us define the error $\hat{e}(t):=w(t)-w^\dagger$ and we aim to show that $\{\hat{e}(t)\}_{t \geq
0}$ is a Cauchy sequence. Then, for an arbitrary $r>t$, one can observe that
 \begin{align}
     \mathbb E \big[\|\hat{e}(t)-\hat{e}(r)\|^2 \big] = \mathbb E \big[\|\hat{e}(t)\|^2 \big] - \mathbb E \big[\|\hat{e}(r)\|^2 \big] + 2 \mathbb E \big[ \langle \hat{e}(r)-\hat{e}(t),\hat{e}(r) \rangle \big]. \label{eq:ex:di:err}
 \end{align}
By Lemma~\ref{lem:exa:mono}, both $\mathbb E\big[\|\hat{e}(t)\|^2\big]$ and $\mathbb E\big[\|\hat{e}(r)\|^2\big]$ are monotonically decreasing and therefore converge to some $\vartheta > 0$. Consequently, one needs to analyze only the last term on the right-hand side of \eqref{eq:ex:di:err}. To this end, \eqref{D-STAR} with the exact data $v$ yields
   \begin{align}
      \mathbb E \big[ \langle \hat{e}(r)-\hat{e}(t),\hat{e}(r) \rangle \big] = &\,  \mathbb E \Big[ \Big \langle \int_t^r \big(\mathcal{F}' (w(s))^* [v- \mathcal{F}(w(s)) ] + \lambda(s) \hat{A}'(w(s))^* [v-\hat{A}(w(s))] \big) ds,\hat{e}(r) \Big \rangle \Big] \notag
      \\
     = &\,  \mathbb E \int_t^r \big[ \langle v- \mathcal{F}(w(s)),\mathcal{F}' (w(s))(w(r)-w^\dagger) \rangle \big] ds \notag
     \\
     & + \mathbb E \int_t^r \big[ \lambda(s) \langle  v-\hat{A}(w(s)), \hat{A}'(w(s))(w(r)-w^\dagger) \rangle \big] ds
     :=  \psi_{\mathcal{F}} + \psi_{\hat{A}}, \label{eq:psi-F+A}
   \end{align}
   where $\psi_{\mathcal{F}}$ and $\psi_{\hat{A}}$ represents the term associated with $\mathcal{F}$ and $\hat{A}$, respectively. 
   For the estimation of $\psi_{\mathcal{F}}$, using H\"older\textquotesingle{s}  inequality and (A2) of Assumption \ref{Assum}, one obtains
   \begin{align}
       \psi_{\mathcal{F}}:= &\, \mathbb E \int_t^r \big[ \langle v- \mathcal{F}(w(s)),\mathcal{F}' (w(s))(w(r)-w^\dagger) \rangle \big] ds \notag
       \\
       \leq &\,  \int_t^r \mathbb E \Big[\|v- \mathcal{F}(w(s))\| \left[ \|\mathcal{F}' (w(s))(w(s)-w^\dagger)\| +  \|\mathcal{F}' (w(s))(w(r)-w(s))\| \right] \Big] ds \notag
       \\
        \leq &\, \int_t^r \big( \mathbb E \big[  \|v-\mathcal{F}(w(s))\|^2 \big]\big)^{1/2} \big(  \mathbb E \big[  \|\mathcal{F}' (w(s))(w(s)-w^\dagger)\|^2  \big] \big)^{1/2}  ds \notag
        \\
        & + \int_t^r  \big(\mathbb E \big[  \|v-\mathcal{F}(w(s))\|^2 \big]\big)^{1/2}  \big( \mathbb E \big[ \|\mathcal{F}' (w(s))(w(r)-w(s))\|^2  \big] \big)^{1/2}  ds \notag
        \\
        \leq &\, 3(\mu+1) \int_t^r \mathbb E \big[ \|v-\mathcal{F}(w(s))\|^2 \big] ds. \label{eq:psi-F}
   \end{align}
   For the estimation of $\psi_{\hat{A}}$, by using (A3) of Assumption \ref{Assum} and H\"older\textquotesingle{s} inequality, one gets
   \begin{align*}
       \psi_{\hat{A}} := &\, \mathbb E  \int_t^r \big[ \lambda(s) \langle  v-\hat{A}(w(s)), \hat{A}'(w(s)) (w(r)-w^\dagger) \rangle \big] ds
       \\
       \leq &\, L_{\hat{A}} \int_t^r \lambda(s) \mathbb E \big [\|v-\hat{A}(w(s))\| \|w(r)- w^\dagger\| \big] ds
       \leq L_{\hat{A}} \int_t^r \lambda(s)  \big(\mathbb E \big[\|v-\hat{A}(w(s))\|^2 \big] \big)^{1/2} \big( \mathbb E \big[\|w(r)- w^\dagger\|^2 \big] \big) ^{1/2}  ds.
   \end{align*}
  By monotonically non-increasing property of $\mathbb E \left[ \|w(t)- w^\dagger \|^2 \right]$ as given in Lemma \ref{lem:exa:mono}, 
   \begin{align}
       \mathbb E \big[ \|w(t)- w^\dagger \|^2 \big] \leq  \|\tilde{w}-w^\dagger\|^2 \quad \text{for all } t\geq 0. \label{eq:dif:ex sc,tr}
   \end{align}
Moreover, (A3) and (A4) of Assumption \ref{Assum} together with \eqref{eq:dif:ex sc,tr}, yields
   \begin{align}
       \mathbb E\big[ \|v - \hat{A} &\, (w(t))\|^2  \big] \leq  2\mathbb E \big[ \|v-\hat{A}(w^\dagger)\|^2 \big] + 2\mathbb E \big[ \|\hat{A}(w^\dagger)-\hat{A}(w(t))\|^2 \big] \notag
       \\
       \leq &\, 2(C_{\max}^{\hat{A}})^2 + 2L_{\hat{A}}^2 \mathbb E \big[ \|w^\dagger-w(t)\|^2 \big] 
       \leq  2(C_{\max}^{\hat{A}})^2 + 2L_{\hat{A}}^2\|w^\dagger-\tilde{w}\|^2. \label{eq:dif:ex y,A}
   \end{align}
 On utilizing \eqref{eq:dif:ex sc,tr} and \eqref{eq:dif:ex y,A}, Young\textquotesingle{s} inequality and (A1) of Assumption \ref{Assum} along with \eqref{eq:lam:ex}, one has
  \begin{align}
       \psi_{\hat{A}}  \leq &\, L_{\hat{A}} \mathbb E \int_t^r \Big[ \lambda(s) \big( 2(C_{\max}^{\hat{A}})^2  + 2L_{\hat{A}}^2\|w^\dagger-\tilde{w}\|^2 \big)^{1/2} \|w^\dagger-\tilde{w}\| \Big] ds \notag
        \\
        \leq &\, \mathbb E \int_t^r \Big[ \lambda(s) \Big( (C_{\max}^{\hat{A}})^2  + \frac{3}{2}L_{\hat{A}}^2 \|w^\dagger-\tilde{w}\|^2 \Big) \Big]ds 
        \leq K_{\lambda} L_{\mathcal{F}}^2  \int_t^r \mathbb E \big[ \|v-\mathcal{F}(w(s))\|^2 \big] ds. \label{eq:psi-A}
  \end{align}
  Therefore, on using \eqref{eq:psi-F} and \eqref{eq:psi-A} in \eqref{eq:psi-F+A}, one obtains
  \begin{align*}
      \mathbb E \big[ \langle \hat{e}(r)-\hat{e}(t),\hat{e}(r) \rangle \big] \leq &\, \big( 3(\mu+1)+ K_{\lambda} L_{\mathcal{F}}^2  \big)  \int_t^r \mathbb E \big[ \|v-\mathcal{F}(w(s))\|^2 \big] ds
  \end{align*}
  which on using Lemma \ref{lem:exa:mono} yields
\(
      \lim_{t \to \infty} \mathbb E \big[ \langle \hat{e}(r)-\hat{e}(t),\hat{e}(r) \rangle \big] =0.
\)
  Thus, from \eqref{eq:ex:di:err}, one concludes that $\{\hat{e}(t)\}_{t\geq 0}$ is a Cauchy sequence. Hence, $\{w(t)\}_{t\geq 0}$ is also a Cauchy sequence and converges to $w^\dagger$, which completes the proof.
\end{proof}


\subsection{Convergence for noisy data}\label{subsec:noisy:conv}
This subsection establishes the main result of this section, which states the regularizing property of \texttt{D-STAR} method~\eqref{D-STAR} for the noisy data $v^\delta$ satisfying \eqref{noise:lev}. The proof relies on the following stability lemma.
\begin{lemma}\label{lem:stab}
Suppose Assumption \ref{Assum} holds and $\lambda^{\delta}(t) \to \lambda(t)$ as $\delta \to 0$ for any fixed $t$. Let $x^\delta(t)$ be the solution of \texttt{D-STAR} method \eqref{D-STAR}  and $w(t)$ be its solution with the exact data $v$. Then, for every $t\geq0$, 
    \begin{align*}
        \lim_{\delta \to 0 } \mathbb E \big [\|x^{\delta}(t)-w(t)\|^2 \big] =0.
    \end{align*}
\end{lemma}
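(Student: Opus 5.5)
We will compare the integral representation \eqref{SAR:sol} of $x^\delta(t)$ with the corresponding deterministic representation of $w(t)$, namely
\begin{align*}
w(t) = \tilde{w} + \int_0^t \big(\mathcal{F}'(w(s))^*[v-\mathcal{F}(w(s))] + \lambda(s)\hat{A}'(w(s))^*[v-\hat{A}(w(s))]\big)\,ds .
\end{align*}
Subtracting, the difference $x^\delta(t)-w(t)$ splits into a drift difference and the stochastic term $\int_0^t g^\delta(s)\,dB(s)$. We then bound the second moment and close the estimate with Gronwall's inequality.

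\textbf{Step 1: splitting the drift difference.} For each $s$ we write
\begin{align*}
\mathcal{F}'(x^\delta)^*[v^\delta-\mathcal{F}(x^\delta)] - \mathcal{F}'(w)^*[v-\mathcal{F}(w)]
= \big(\mathcal{F}'(x^\delta)^*[v^\delta-\mathcal{F}(x^\delta)] - \mathcal{F}'(w)^*[v^\delta-\mathcal{F}(w)]\big) + \mathcal{F}'(w)^*(v^\delta - v).
\end{align*}
By (A6) of Assumption \ref{Assum}, the first bracket is bounded by $\tilde L_{\mathcal{F}}\|x^\delta-w\|$. By (A1) and \eqref{noise:lev}, the second is bounded by $L_{\mathcal{F}}\delta$.

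For the data-driven part we use the analogous splitting
\begin{align*}
&\lambda^\delta\big(\hat{A}'(x^\delta)^*[v^\delta-\hat{A}(x^\delta)] - \hat{A}'(w)^*[v^\delta-\hat{A}(w)]\big) + \lambda^\delta\hat{A}'(w)^*(v^\delta-v) \\
&\quad + (\lambda^\delta-\lambda)\,\hat{A}'(w)^*[v-\hat{A}(w)].
\end{align*}
Since $\lambda^\delta\le\Gamma$, the first two pieces are bounded by $\Gamma\tilde L_{\hat{A}}\|x^\delta-w\| + \Gamma L_{\hat{A}}\delta$. The last piece is deterministic. Its norm is controlled by $|\lambda^\delta(s)-\lambda(s)|\,M$, where, by Proposition \ref{R:lin;gr} and Lemma \ref{lem:exa:mono}, $M$ bounds $\sup_s\|\hat{A}'(w(s))^*[v-\hat{A}(w(s))]\|$. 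Indeed, $\|w(s)\|$ stays bounded because $\|w(s)-w^\dagger\|\le\|\tilde w-w^\dagger\|$.

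\textbf{Step 2: the second-moment estimate.} Using $(a+b+c)^2\le 3(a^2+b^2+c^2)$, Cauchy--Schwarz in time, and the It\^o isometry of Theorem \ref{th:pre;stoc-int} for the stochastic term, we aim for
\begin{align*}
\mathbb E\big[\|x^\delta(t)-w(t)\|^2\big] \le C_1 t\int_0^t \mathbb E\big[\|x^\delta(s)-w(s)\|^2\big]\,ds + C_2 t^2\delta^2 + C_3 t M^2\int_0^t|\lambda^\delta(s)-\lambda(s)|^2\,ds + C_4\,\delta^2\, t\,\|h\|_\infty^2\,\mathrm{tr}(\mathcal{Q}).
\end{align*}
Here $C_1 = 3(\tilde L_{\mathcal{F}}+\Gamma\tilde L_{\hat A})^2$ and the remaining constants are absolute. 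The stochastic term is $O(\delta^2)$ because $g^\delta=\delta h$ with $h$ bounded by $h(0)$.

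Gronwall's inequality on $[0,t]$ then gives
\begin{align*}
\mathbb E\big[\|x^\delta(t)-w(t)\|^2\big] \le e^{C_1t^2}\Big(C\delta^2 + C_3 tM^2\int_0^t|\lambda^\delta-\lambda|^2\,ds\Big).
\end{align*}

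\textbf{Step 3: passing to the limit.} It remains to show $\int_0^t|\lambda^\delta(s)-\lambda(s)|^2\,ds\to0$. The integrand is bounded by $4\Gamma^2$ and converges to $0$ pointwise by hypothesis, so dominated convergence applies. The remaining terms are $O(\delta^2)$, which completes the proof.

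\textbf{Expected obstacle.} The main delicate point is that (A6), (A1) and Proposition \ref{R:lin;gr} are only stated on $B_{2\rho}(\tilde w)$. Lemma \ref{lem:mono} places $x^\delta$ there only for $t<t^*$, and even then only in the mean-square sense. We therefore plan to invoke the global version of the assumptions, as in Lemma \ref{lem:well}, where they are assumed for all $w,\bar w$.

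A secondary subtlety is that $\lambda^\delta$ and $\lambda$ may depend on expectations of the solution, so they are deterministic functions of $s$. This is precisely what makes the dominated convergence argument in Step 3 legitimate.
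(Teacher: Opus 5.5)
Your proposal is correct and follows essentially the same route as the paper. Both split the drift difference using (A1) and (A6), decompose the $\hat{A}$-part as $\lambda^\delta$ times a Lipschitz difference plus $(\lambda^\delta-\lambda)$ times the exact-data term, control the diffusion by the It\^o isometry with $g^\delta=\delta h$, close with Gr\"onwall, and remove the $(\lambda^\delta-\lambda)$ contribution by dominated convergence.

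One small refinement: you do not need Lemma \ref{lem:exa:mono}, or its extra hypothesis \eqref{eq:lam:ex}, to bound $\|\hat{A}'(w(s))^*[v-\hat{A}(w(s))]\|$ on $[0,t]$. Continuity of the deterministic path $w$ on that compact interval, or alternatively Proposition \ref{R:lin;gr} together with \eqref{eq:pre:MB1}, already gives the integrability that dominated convergence needs.
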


\begin{proof}
Let us denote
\begin{align}
     \mathrm{F}^\delta(s) :=\mathcal{F}' (x^{\delta}(s))^* [v^{\delta} - \mathcal{F}(x^{\delta}(s)) ], & \quad \mathrm{F}(s):=\mathcal{F}' (w(s))^* [v- \mathcal{F}(w(s)) ], \label{eq:st:F}
     \\
      \mathrm{A}^\delta(s):= \hat{A}'(x^{\delta}(s))^* [v^{\delta} - \hat{A}(x^{\delta}(s)) ], & \quad \mathrm{A}(s):= \hat{A}' (w(s))^* [v - \hat{A}(w(s)) ]. \label{eq:st:A}
\end{align}
From \eqref{SAR:sol}, one can see that
\begin{align*}
    x^{\delta}(t) &\, -w(t) = \int_0^t \big(\mathrm{F}^\delta(s) - \mathrm{F}(s) \big)  ds 
     + \int_0^t \big( \lambda^{\delta}(s)   \mathrm{A}^\delta(s)  - \lambda(s) \mathrm{A}(s) \big) ds 
     + \int_0^t g^{\delta}(s)dB(s)
\end{align*}
which on using an elementary inequality $\|p+q+r\|^2\leq 3\|p\|^2+3\|q\|^2+3\|r\|^2$, for all $p,q,r \geq 0$ along with H\"older\textquotesingle{s} inequality, Theorem \ref{th:pre;stoc-int} and inequality \eqref{eq:MDP:f;del} gives
\begin{align}
    \mathbb E \big[ \|x^{\delta}(t)  -w(t)\|^2 \big] \leq &\, 3t  \int_0^t \mathbb E \big[ \|\mathrm{F}^\delta(s) - \mathrm{F}(s) \|^2 \big] ds \notag 
    \\
    & +  3t \int_0^t \mathbb E \big[ \|\lambda^{\delta}(s)   \mathrm{A}^\delta(s)  - \lambda(s) \mathrm{A}(s)\|^2 \big] ds + 3t \tau^2 \delta^2 \mu  \epsilon. \label{eq:di,xn,x}
\end{align}
Now, for the estimation of first term on the right-hand side of \eqref{eq:di,xn,x},  using (A1) and (A6) of Assumption \ref{Assum}, one obtains
    \begin{align}
       \mathbb E \big[\|\mathrm{F}^\delta(s) - \mathrm{F}(s) \|^2 \big]= &\,  \mathbb E \big[ \|\mathcal{F}' (x^{\delta}(s))^* [v^{\delta} - \mathcal{F}(x^{\delta}(s)) ] -\mathcal{F}' (w(s))^* [v- \mathcal{F}(w(s)) ] \|^2 \big] \notag
        \\
        \leq &\, 2 \mathbb E \big[ \|\mathcal{F}' (x^{\delta}(s))^* [v^{\delta} - \mathcal{F}(x^{\delta}(s)) ] -\mathcal{F}' (w(s))^* [v^{\delta} - \mathcal{F}(w(s)) ] \|^2 \big] \notag
        \\
        & + 2 \mathbb E \big[ \|\mathcal{F}' (w(s))^* [v^{\delta} - \mathcal{F}(w(s)) ] -\mathcal{F}' (w(s))^* [v- \mathcal{F}(w(s)) ] \|^2 \big] \notag
        \\
        \leq &\, 2 \tilde{L}_{\mathcal{F}}^2 \mathbb E \big[ \|x^{\delta}(s)-w(s)\|^2 \big]  + 2 L_{\mathcal{F}}^2\|v^\delta -v\|^2 . \label{eq:di,fn,f}
    \end{align}
Furthermore, for the estimation of second term in \eqref{eq:di,xn,x}, notice that
    \begin{align*}
       \mathbb E \big[\|\lambda^{\delta}(s)   \mathrm{A}^\delta(s)  - \lambda(s) \mathrm{A}(s)\|^2 \big] \leq 2 \mathbb E \big[ \|\lambda^{\delta}(s) (  \mathrm{A}^\delta(s) - \mathrm{A}(s))\|^2 \big] + 2 \mathbb E \big[ \|(\lambda^{\delta}(s)-\lambda(s)) \mathrm{A}(s)\|^2\big].
    \end{align*}
Now, on performing similar steps as in \eqref{eq:di,fn,f}, together with (A3) and (A6) of Assumption \ref{Assum}, one gets
\begin{align}
    \mathbb E \big[ \|\lambda^{\delta}(s) (  \mathrm{A}^\delta(s) - \mathrm{A}(s))\|^2 \big] = &\, 2 \Gamma^2 \tilde{L}_{\hat{A}}^2  \mathbb E \big[ \|x^{\delta}(s)-w(s)\|^2 \big] + 2 \Gamma^2  L_{\hat{A}}^2 \|v^\delta -v\|^2. \label{eq:An,f}
\end{align}
Utilizing \eqref{eq:di,fn,f} and \eqref{eq:An,f} into \eqref{eq:di,xn,x} yields
\begin{align*}
    \mathbb E \big[ \|x^{\delta}(t)  -w(t)\|^2 \big] \leq &\,  6t\big( \tilde{L}_\mathcal{F}^2 + \Gamma^2 \tilde{L}_{\hat{A}}^2 \big) \int_0^t \mathbb E \big[ \|x^{\delta}(s)-w(s)\|^2 \big]  ds + 6t \int_0^t |\lambda^{\delta}(s)-\lambda(s)|^2 \mathbb E \big[ \| \mathrm{A}(s)\|^2 \big] ds
    \\
    & + 6t \big(\tilde{L}_\mathcal{F}^2+ \Gamma^2 L_{\hat{A}}^2 \big) \|v^\delta-v\|^2 + 3t \tau^2 \delta^2 \mu \epsilon 
\end{align*}
which by the application of Gr\"onwall\textquotesingle{s} lemma gives
\begin{align*}
    \mathbb E \big[ \|x^{\delta}(t)  -w(t)\|^2 \big] \leq &\, e^{6t^2\big( \tilde{L}_\mathcal{F}^2 + \Gamma^2 \tilde{L}_{\hat{A}}^2 \big) } \Big[ 6t\int_0^t |\lambda^{\delta}(s)-\lambda(s)|^2 \mathbb E \big[ \| \mathrm{A}(s)\|^2 \big] ds + 6t \big(\tilde{L}_\mathcal{F}^2+ \Gamma^2 L_{\hat{A}}^2 \big) \|v^\delta -v\|^2 
    \\
    & + 3t \tau^2 \delta^2 \mu \epsilon \Big].
\end{align*}
Since $\lambda^{\delta}(s)\to \lambda(s)$ as $\delta \to 0$, so on using the dominated convergence, one concludes the desired result.
\end{proof}


\noindent Now, we establish the regularizing property of \texttt{D-STAR} method \ref{D-STAR} with \eqref{def:f} for the noisy data.

\begin{theorem}\label{th:main;SDP}
Let assumptions of Lemma~\ref{lem:mono}, \ref{lem:stab}, and Theorem~\ref{th:exa,conv} be satisfied. Further, assume $t^*$ denotes the stopping time associated with Rule~\ref{rule:MDP}. Then, there exists a solution $w^\dagger$ to \eqref{IP} such that
    \begin{align*}
        \lim_{\delta \to 0} \mathbb E \left[ \|x^{\delta}(t^*)-w^\dagger\|^2 \right] =0.
    \end{align*}
\end{theorem}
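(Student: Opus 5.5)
We follow the classical argument for Landweber-type methods under the discrepancy principle (Hanke--Neubauer--Scherzer), adapted to the mean-square setting. Fix a sequence $\delta_n \to 0$ with data $v^{\delta_n}$ and stopping times $t_n^* := t^*(\delta_n, v^{\delta_n})$, which are finite by Theorem \ref{th:uni,t}. Let $w^\dagger$ be the limit solution supplied by Theorem \ref{th:exa,conv} for the exact-data trajectory $w(t)$. We split into two cases according to whether $\{t_n^*\}$ is bounded or unbounded; since every subsequence has a further subsequence falling into one of these cases, this gives convergence of the whole family.

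\textbf{Case 1: $t_n^*$ bounded.} Pass to a subsequence with $t_n^* \to \bar t < \infty$. Write
\[
x^{\delta_n}(t_n^*) - w(\bar t) = \big(x^{\delta_n}(t_n^*) - w(t_n^*)\big) + \big(w(t_n^*) - w(\bar t)\big).
\]
The second term tends to zero by continuity of the deterministic trajectory $w$. For the first term we need Lemma \ref{lem:stab} to hold uniformly on $[0,T]$ with $T \ge \sup_n t_n^*$. This is automatic from its proof, since the Gr\"onwall bound is monotone in $t$; we only have to check that the $\lambda^\delta \to \lambda$ term is controlled uniformly, which follows from dominated convergence on $[0,T]$. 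Next we pass to the limit in the stopping condition
\[
\mathbb E\big[\|v^{\delta_n} - \mathcal F(x^{\delta_n}(t_n^*))\|^2\big] \le \tau^2 \delta_n^2 .
\]
Continuity of $\mathcal F$, together with the Lipschitz bound available from (A1) on $B_{2\rho}(\tilde w)$ (by the mean value inequality), gives $\mathcal F(w(\bar t)) = v$. Hence $w(\bar t)$ solves \eqref{IP}. For $t \ge \bar t$ the exact-data flow then has zero $\mathcal F$-residual, so it stays at $w(\bar t)$, at least when the data-driven drift vanishes there. Consequently $w(\bar t) = \lim_{t\to\infty} w(t) = w^\dagger$.

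The delicate point in Case 1 is that the data-driven term need not vanish at a solution of \eqref{IP}. However, \eqref{eq:lam:ex} forces $\lambda(t) \le K_\lambda\, \mathbb E\big[\|\mathcal F'(w)^*(v-\mathcal F(w))\|^2\big]/\gamma = 0$ there, so the flow is indeed stationary.

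\textbf{Case 2: $t_n^* \to \infty$.} Fix $\varepsilon > 0$ and use Theorem \ref{th:exa,conv} to choose $T$ with $\mathbb E\|w(T) - w^\dagger\|^2 < \varepsilon$. For $n$ large we have $t_n^* > T$. Lemma \ref{lem:mono}, applied with $w^\dagger$ as the reference solution, gives monotonicity on $[0,t_n^*)$, and continuity extends it to $t_n^*$, so
\[
\mathbb E\|x^{\delta_n}(t_n^*) - w^\dagger\|^2 \le \mathbb E\|x^{\delta_n}(T) - w^\dagger\|^2 \le 2\,\mathbb E\|x^{\delta_n}(T) - w(T)\|^2 + 2\varepsilon .
\]
The first term on the right tends to $0$ by Lemma \ref{lem:stab}. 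Hence $\limsup_n \mathbb E\|x^{\delta_n}(t_n^*) - w^\dagger\|^2 \le 2\varepsilon$, and since $\varepsilon$ was arbitrary the limit is zero.

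We expect the main obstacle to be Case 1. It requires stability that is uniform on compact time intervals, and it requires showing that the limit $w(\bar t)$ coincides with the particular $w^\dagger$ from Theorem \ref{th:exa,conv}. This is also why the statement asserts only the existence of some solution $w^\dagger$: Lemma \ref{lem:mono} holds for every solution in $B_\rho(\tilde w)$, so the monotonicity argument in Case 2 can be run with whichever solution arises as the limit.
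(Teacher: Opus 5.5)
Your proposal is correct and follows the paper's proof. It uses the same split into bounded and unbounded stopping times, and it handles the unbounded case exactly as the paper does, by combining the monotonicity of Lemma~\ref{lem:mono}, the stability of Lemma~\ref{lem:stab}, and the exact-data convergence of Theorem~\ref{th:exa,conv}. For the bounded case the paper only refers to the Hanke--Neubauer--Scherzer argument, whereas you carry it out: uniform stability on compact time intervals, the subsequence argument that makes the case split exhaustive, and the D-STAR-specific point that \eqref{eq:lam:ex} forces $\lambda$ to vanish at an exact solution, so the exact flow is stationary after $\bar t$ (cleanest seen by running Lemma~\ref{lem:exa:mono} with $w(\bar t)$ as the reference solution); this is a faithful elaboration rather than a different route.
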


\begin{proof}
    Let $\delta_T$ be a sequence such that $\delta_T \to 0$ as $T \to \infty$ and consider noisy data $v^{\delta_T}$ associated with each $\delta_T$. For the pair $(\delta_T,v^{\delta_T})$, let $t^*_T:=t^*(\delta_T,v^{\delta_T})$ be the stopping time determined by Rule~\ref{rule:MDP}. Then, we need to consider the two cases: $ \sup_{T \geq 0} t^*_T < \infty$ and $ \lim_{T \to \infty } t^*_T \to \infty$. For the first case, result can be established using the arguments similar to  \cite[Theorem 2.4]{hanke1995convergence}. For the second case, we may assume that $t^*_T$ is a strictly monotonically increasing sequence with respect to $T$.  Consequently, for every $\bar{T}<T$, from Lemma \ref{lem:mono}, one can observe
    \begin{align*}
        \mathbb E\big[ \|w^{\delta_T}(t^*_T)-w^\dagger\|^2 \big] \leq  \mathbb E \big[ \|w^{\delta_T}(t^*_{\bar{T}})-w^\dagger\|^2 \big]
        \leq 2 \mathbb E \big[\|w^{\delta_T}(t^*_{\bar{T}})-w(t^*_{\bar{T}})\|^2 \big] + 2\mathbb E \big[\|w(t^*_{\bar{T}})-w^\dagger\|^2 \big].
    \end{align*}
    Since $\bar{T}$ is fixed, Lemma \ref{lem:stab} shows that the first term on the right-hand side converges to zero as $T\to\infty$. On the other hand, Theorem \ref{th:exa,conv} ensures that the second term can be made arbitrarily small upon choosing $\bar{T}$ sufficiently large. This concludes the desired result. 
\end{proof}


\section{Convergence Analysis of \texttt{D-STAR} Method under Heuristic Rule}\label{sec:heu}

\noindent 
This section is devoted for the convergence analysis of \texttt{D-STAR} method~\eqref{D-STAR} when the noise level $\delta$ is unknown. In this setting, we employ a heuristic stopping rule introduced in Rule~\ref{rule:HR} and establish the corresponding convergence results under this criterion.
We first describe the \texttt{D-STAR} method~\eqref{D-STAR} in detail and then prove its monotonicity and finite termination properties in Subsection~\ref{subsec:hr:method}. Subsequently, Subsection~\ref{sub:hr:ex} demonstrates its convergence to the true solution of \eqref{IP} for the exact data case. Finally, Subsection~\ref{sub:hr:noisy} investigates the regularizing property of the \texttt{D-STAR} method \eqref{D-STAR} for the noisy data case, in absence of noise level.

\subsection{The method}\label{subsec:hr:method}
For \texttt{D-STAR} with a heuristic stopping rule, we assume that Assumption \ref{Assum} is satisfied for $B_{\sqrt{10}\rho}(\tilde{w})$ and condition (A2) holds with $\mu \in (0,1/3)$, which arises from the mathematical constraints of the analysis.  
To facilitate the convergence analysis, we define $g^\delta(t,x^\delta(t))$ as
\begin{align}
    g^\delta(t,x^{\delta}(t)) = \sqrt{\mu \tilde{\epsilon}} \|\mathcal{F}' (x^{\delta}(t))^* [v^{\delta} - \mathcal{F}(x^{\delta}(t))] \| h(t), \quad \tilde{\epsilon} \in \Big(0, \frac{2-2\mu}{\mu L_{\mathcal{F}}^2} \Big), \label{def:f:hr}
\end{align}
where $h(t)$ is a monotonically decreasing function such that
  $\lim_{t \to \infty} h(t)=0$ {and}  $h(0)= 1.$ 
  \footnote{ When $\mathcal{F}$ is linear, i.e. $\mu =0$, no additional adjustment through $\epsilon$ is required and we set $g^\delta(t,x^{\delta}(t)) =  \|\mathcal{F}' (x^{\delta}(t))^* [v^{\delta} - \mathcal{F}(x^{\delta}(t))] \| h(t).$}
  We note $g^\delta(t,x^\delta(t))$ depends on $x^\delta(t)$, while independent of noise level $\delta$, and the condition on $\tilde{\epsilon}$ arises from $\tau$ introduced in \eqref{def:hat,t}.
Throughout this section, we adapt the definition of $g^\delta(t,x^\delta(t))$ given in~\eqref{def:f:hr}. We further assume that the regularization weight $\lambda^\delta(t)$ satisfies
$
\sup_{t\geq0}\lambda^\delta(t)=\tilde{\Gamma}:={R_{\mathcal{F}}^2}/{\tilde{\beta}},
$
where $\tilde{\beta}$ is a constant which depends on $v^\delta$, $L_{\hat{A}}$, $\rho$, $\tilde{w}$ and its explicit value is given in Lemma~\ref{l:Hr,mon}. Moreover, by (A5) of Assumption~\ref{Assum} and arguments analogous to Lemma~\ref{lem:well}, \texttt{D-STAR} method \eqref{D-STAR} with \eqref{def:f:hr} admits a unique mild solution.
Now, from (A1) of Assumption \ref{Assum}, one can observe that
\begin{align}
    \|g^\delta(t,x^{\delta}(t))\|^2 \leq {\mu \tilde{\epsilon}} \|\mathcal{F}' (x^{\delta}(t))^* [v^{\delta} - \mathcal{F}(x^{\delta}(t)) ] \|^2 |h(t)|^2 \leq \mu \tilde{\epsilon} L_{\mathcal{F}}^2 \|v^{\delta} - \mathcal{F}(x^{\delta}(t))\|^2  \label{eq:f:hr}
\end{align}
which will be a key component in subsequent analysis.

Before proceeding with the convergence results, we first prove the existence of finite stopping time $\tilde{t}^*$ defined by  Rule~\ref{rule:HR}. Indeed, if $t_\infty < \infty$ then its finiteness is immediate. Hence, it remains to consider the case $t_\infty = \infty$ only. In this case, from Rule \ref{rule:HR} and Assumption \ref{as:HR}, notice that
\begin{align*}
    \alpha(t,v^{\delta}) = (t+a) \mathbb E \big[ \|v^{\delta}-\mathcal{F}(x^{\delta}(t))\|^2 \big] \geq \kappa(t+a) \|v^{\delta}-v\|^2  \to \infty \quad \text{as} \quad  t \to \infty.
\end{align*}
Thus, there exists a finite $\tilde{t}^*$ which gives the infimum of $\alpha(t,v^{\delta})$.
Now, in order to establish the regularizing property of \texttt{D-STAR} method \eqref{D-STAR} with \eqref{def:f:hr}, one needs to show that $\alpha (\tilde{t}^*,v^{\delta}) \to 0$  \text{as} $\delta \to 0.$
For this purpose, we define an auxiliary stopping time $\hat{t}^*$ such that 
\begin{align}
    \mathbb E \big[ \|v^{\delta}- \mathcal{F}(x^\delta(\hat{t}^*))\|^2 \big] +\frac{\mathcal{C}}{(\hat{t}^* +a)^2} \leq \tau^2 \|v^\delta -v\|^2 < \mathbb E \big[ \|v^{\delta}- \mathcal{F}(x^\delta(t))\|^2 \big] +\frac{\mathcal{C}}{(t +a)^2} \label{def:hat,t}
\end{align}
for all $t < \hat{t}^*$, where $\mathcal{C}=\frac{3a \rho^2}{(\mu+1)}$ and $\tau > 1$ (distinct from $\tau$ introduced in Rule \ref{rule:MDP}). One can see that it is a  modification  of Rule \ref{rule:MDP} with an extra term $\mathcal{C}$. This extra term guarantees that  $\hat{t}^* \to \infty$ as $\delta \to 0$, which is essential to prove that $\alpha (\tilde{t}^*,v^{\delta}) \to 0$  \text{as} $\delta \to 0.$
The pseudo-code for \texttt{D-STAR} method \eqref{D-STAR} with \eqref{def:f:hr}, under Rule \ref{rule:HR} with corresponding parameters specified in this subsection is analogous to Algorithm~\ref{alg:MDP}.

\begin{lemma}\label{l:Hr,mon}
Suppose Assumptions \ref{Assum} and \ref{as:HR} hold and let $x^{\delta}(t)$ denotes the solution of  \texttt{D-STAR} method \eqref{D-STAR}. Further, assume that there exists a constant $\tilde{K}_{\lambda^\delta}$ such that
\begin{align}\label{as:lam:hr}
    \lambda^{\delta}(t) \leq &\, {\tilde{K}_{\lambda^{\delta}} \mathbb E \big[\|\mathcal{F}' (x^{\delta}(t))^* [v^{\delta}-\mathcal{F} (x^{\delta}(t))]\|^2 \big]}/ \tilde{\beta} \quad \text{and} \quad   \mathcal{H}_2:=  \mathcal{H}_{\tau} - \tilde{K}_{\lambda^{\delta}} L_{\mathcal{F}}^2 > 0,
\end{align}
where $\tilde{\beta}= \|v^{\delta}-\hat{A} (\tilde{w})\|^2+ 3{L}_{\hat{A}}^2 \rho^2/2$ and $\mathcal{H}_{\tau}:= 2\left( 1-\mu - \frac{\mu+1}{\tau} \right) - \mu \tilde{\epsilon} L_{\mathcal{F}}^2 >0$.
Then, $x^{\delta}(t) \in B_{\sqrt{10}\rho}(\tilde{w})$ for all $t< \hat{t}^*$, where $\hat{t}^*$ is defined in \eqref{def:hat,t}.
\end{lemma}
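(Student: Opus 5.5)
We follow the pattern of Lemma \ref{lem:mono}, now with the state-dependent diffusion \eqref{def:f:hr} and the auxiliary time $\hat{t}^*$ from \eqref{def:hat,t}. The goal is to show that for $t<\hat{t}^*$ the map $t\mapsto \mathbb E[\|x^\delta(t)-w^\dagger\|^2]$ is non-increasing, up to a controlled error. This gives $\mathbb E[\|x^\delta(t)-w^\dagger\|^2]\le C\rho^2$ for a suitable $C$, and then $\mathbb E[\|x^\delta(t)-\tilde w\|^2]\le 2\mathbb E[\|x^\delta(t)-w^\dagger\|^2]+2\rho^2\le 10\rho^2$. That last inequality explains the radius $\sqrt{10}\rho$: we need $C=4$, i.e.\ the error term may raise the squared error to at most $4\rho^2$.

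\textbf{Step 1 (It\^o identity).} Apply Theorem \ref{th:pre;ito} to $\|x^\delta(t)-w^\dagger\|^2$, take expectations and use the martingale property of Theorem \ref{th:pre;stoc-int}. Differentiating, as in \eqref{eq:mon:der:dif}, gives
\[
\tfrac{d}{dt}\mathbb E\big[\|x^\delta(t)-w^\dagger\|^2\big]=\phi_{\mathcal F}+\phi_{\hat A}+\mathbb E\big[|g^\delta(t,x^\delta(t))|^2\big].
\]

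\textbf{Step 2 (bounding the three terms).} The estimate \eqref{eq:phi-F} is unchanged, except that $\delta$ is replaced by $\|v^\delta-v\|$. For $\phi_{\hat A}$ we repeat \eqref{eq:phi-A*}--\eqref{eq:phi-A-int}, but we bound $\|v^\delta-\hat A(w^\dagger)\|^2$ without using $\delta$ or (A4). We write $v^\delta-\hat A(w^\dagger)=(v^\delta-\hat A(\tilde w))+\hat A(\tilde w-w^\dagger)$. Young's inequality, (A3) and $\|\tilde w-w^\dagger\|\le\rho$ then give a bound by $\tilde\beta$, with the constant $3L_{\hat A}^2\rho^2/2$ absorbed suitably. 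Combining this with \eqref{as:lam:hr} and (A1) yields $\phi_{\hat A}\le \tilde K_{\lambda^\delta}L_{\mathcal F}^2\mathbb E[\|v^\delta-\mathcal F(x^\delta)\|^2]$. The diffusion term is bounded by $\mu\tilde\epsilon L_{\mathcal F}^2\mathbb E[\|v^\delta-\mathcal F(x^\delta)\|^2]$ via \eqref{eq:f:hr}.

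\textbf{Step 3 (the cross term).} The remaining cross term is $2(\mu+1)\|v^\delta-v\|\,(\mathbb E[\|v^\delta-\mathcal F(x^\delta)\|^2])^{1/2}$. For $t<\hat t^*$, \eqref{def:hat,t} gives $\tau^2\|v^\delta-v\|^2<\mathbb E[\|v^\delta-\mathcal F(x^\delta(t))\|^2]+\mathcal C/(t+a)^2$, and we split according to which summand dominates. On the set of times where the residual dominates, $\|v^\delta-v\|\le \sqrt2\,\tau^{-1}(\mathbb E\|\cdot\|^2)^{1/2}$, or more carefully $\|v^\delta-v\|\le\tau^{-1}(\mathbb E\|\cdot\|^2+\mathcal C(t+a)^{-2})^{1/2}$. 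A cleaner route is to use $\sqrt{p+q}\le\sqrt p+\sqrt q$:
\[
2(\mu+1)\|v^\delta-v\|\,r\le \tfrac{2(\mu+1)}{\tau}r^2+\tfrac{2(\mu+1)}{\tau}\sqrt{\mathcal C}\,\tfrac{r}{t+a},
\]
where $r=(\mathbb E\|v^\delta-\mathcal F(x^\delta)\|^2)^{1/2}$. Young's inequality on the last product turns it into a small multiple of $r^2$, absorbed into $\mathcal H_2$ by slightly adjusting constants, plus a multiple of $\mathcal C/(t+a)^2$. Altogether this gives
\[
\tfrac{d}{dt}\mathbb E\big[\|x^\delta(t)-w^\dagger\|^2\big]\le -\mathcal H_2\,\mathbb E\big[\|v^\delta-\mathcal F(x^\delta)\|^2\big]+c\,\frac{(\mu+1)\,\mathcal C}{(t+a)^2}.
\]

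\textbf{Step 4 (integration).} Integrating over $[0,t]$ gives $\mathbb E[\|x^\delta(t)-w^\dagger\|^2]\le\rho^2+c(\mu+1)\mathcal C/a=\rho^2+3c\rho^2$. This is exactly where $\mathcal C=3a\rho^2/(\mu+1)$ is tailored: the final bound is $4\rho^2$ when $c=1$. The conclusion $x^\delta(t)\in B_{\sqrt{10}\rho}(\tilde w)$ then follows as described above.

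\textbf{Main obstacle.} The delicate part is Step 3: controlling the cross term so that the leftover constant $c$ equals $1$ and the absorbed part stays within $\mathcal H_\tau$. This is likely where the restriction $\mu<1/3$ enters, and the Young-splitting weights may need tuning (e.g.\ $ab\le \tfrac{\mu+1}{\tau}\cdot$ weights) to obtain the exact constants.

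There is also a logical issue: Steps 1--2 use (A1)--(A6), which hold only on the ball. We therefore run a continuity argument. Let $s_0$ be the first time $x^\delta$ leaves $B_{\sqrt{10}\rho}(\tilde w)$. The bound above holds on $[0,s_0)$, and by continuity of $t\mapsto\mathbb E\|x^\delta(t)-\tilde w\|^2$ (from \eqref{SAR:sol} and Theorem \ref{th:pre;stoc-int}) we get $s_0\ge\hat t^*$.
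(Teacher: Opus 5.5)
Steps 1, 2 and 4, and the final estimate $2\cdot 4\rho^2+2\rho^2=10\rho^2$, match the paper's argument. Step 3, however, does not close as you wrote it, and this cannot be fixed by tuning weights. After the split $\sqrt{p+q}\le\sqrt p+\sqrt q$ you already carry the full coefficient $\frac{2(\mu+1)}{\tau}$ on $r^2$, which is exactly what $\mathcal H_\tau$ budgets for. Young's inequality with weight $\eta>0$ on the remaining product gives
\[
\frac{2(\mu+1)}{\tau}\sqrt{\mathcal C}\,\frac{r}{t+a}\le \frac{(\mu+1)\eta}{\tau}\,r^2+\frac{\mu+1}{\tau\eta}\,\frac{\mathcal C}{(t+a)^2}.
\]
So you must absorb an extra $\frac{(\mu+1)\eta}{\tau}r^2$, and your leftover constant is $c=1/(\tau\eta)$. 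The lemma only assumes $\mathcal H_2>0$, so there is no slack to absorb anything; the constants are fixed by the statement, and ``slightly adjusting'' them is not available. Meanwhile, $c\le 1$ forces $\eta\ge 1/\tau$, and you need $c\le1$ to reach $4\rho^2$ and hence the radius $\sqrt{10}\rho$ on which the assumptions are imposed. Having both would require $\mathcal H_2\ge(\mu+1)/\tau^2$, which is not a hypothesis. The split of the square root is simply too lossy.

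The missing step is to keep the square root intact, as in your ``more careful'' bound, and apply AM--GM to the product directly. With $q=\mathcal C/(t+a)^2$,
\[
\|v^\delta-v\|\,r\le\frac1\tau\, r\sqrt{r^2+q}\le\frac1\tau\Big(r^2+\frac q2\Big),
\]
so the cross term is at most $\frac{2(\mu+1)}{\tau}r^2+\frac{\mu+1}{\tau}\,\frac{\mathcal C}{(t+a)^2}$. Now the $r^2$ coefficient is exactly the one in $\mathcal H_\tau$, and $c=1/\tau<1$. Since $\int_0^t(s+a)^{-2}\,ds\le 1/a$, the extra contribution is bounded by $(\mu+1)\mathcal C/a=3\rho^2$; this is \eqref{eq:hr:ar:pro}--\eqref{eq:hr:er} in the paper.

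Two side remarks. First, $\mu<1/3$ plays no role in this lemma. It enters only in Lemma \ref{lem:hr:mono}, where the cross term is bounded without using $\hat t^*$. Second, your closing continuity argument, which the paper omits, needs a strict inequality at the exit time to get past it. The corrected estimate supplies this, since $3\rho^2/\tau<3\rho^2$.
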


\begin{proof}
By applying the same arguments used in \eqref{eq:Ito:di x^n,x*} of Lemma \ref{lem:mono}, one has the following,
\begin{align}
       \mathbb E \big[\|x^{\delta}(t)- &\, w^\dagger \|^2 \big] =  \mathbb E \big[\| \tilde{w} -w^\dagger\|^2 \big] +  2\mathbb E\int_0^t \big[ \big \langle x^{\delta}(s)- w^\dagger,\mathcal{F}' (x^{\delta}(s))^* [v^{\delta} - \mathcal{F}(x^{\delta}(s)) ] \big \rangle \big] ds \notag 
       \\
       & +  2\mathbb E\int_0^t \big[ \big \langle x^{\delta}(s)- w^\dagger, \lambda^{\delta}(s) \hat{A}'(x^{\delta}(s))^* [v^{\delta}-\hat{A}(x^{\delta}(s))] \big \rangle \big] ds  +  \int_0^t \mathbb E \big[\|g^{\delta}(s,x^{\delta}(s))\|^2 \big] ds \notag 
       \\
      = &\, \| \tilde{w} -w^\dagger\|^2  +\tilde{\phi}_{\mathcal{F}} + \tilde{\phi}_{\hat{A}} + \tilde{\phi}_{{B}}, \label{eq:til-phi}
   \end{align}
  where $\tilde{\phi}_{\mathcal{F}}$ and $\tilde{\phi}_{\hat{A}}$ denote the terms associated with $\mathcal{F}$ and $\hat{A}$, respectively, while $\tilde{\phi}_B$ denotes the term associated with diffusion. To estimate $\tilde{\phi}_{\mathcal{F}}$, using similar arguments to \eqref{eq:phi-F}, one obtains
 \begin{align}\label{eq:til-phi-F*}
         \tilde{\phi}_{\mathcal{F}} := &\, 2 \int_0^t \mathbb E \big[ \big \langle w^{\delta}(s)- w^\dagger,\mathcal{F}' (w^{\delta}(s))^* [v^{\delta} - \mathcal{F}(x^{\delta}(s)) ] \big \rangle \big]ds  \notag
         \\
         \leq &\, \int_0^t \Big[ 2 (\mu-1) \mathbb E  \big [\|v^{\delta} - \mathcal{F}(w^{\delta}(s))\|^2 \big]  + 2 (\mu+1) \| v^{\delta} -v\| \mathbb E  \big [\|v^{\delta} - \mathcal{F}(w^{\delta}(s))\| \big] \Big] ds
     \end{align}
Now, for every $s <\hat{t}^*$, \eqref{def:hat,t} implies that 
\begin{align*}
    \|v^\delta -v\| < \frac{1}{\tau} \left( \mathbb E \big[ \|v^{\delta}- \mathcal{F}(x^\delta(s))\|^2 \big] +\frac{\mathcal{C}}{(s +a)^2} \right)^{1/2}
\end{align*}
which on using Young\textquotesingle{s} inequality yields
\begin{align}
     \| v^{\delta} -v\| \mathbb E  \big [\|v^{\delta} - \mathcal{F}(w^{\delta}(s))\| \big]  \leq \frac{1}{\tau} \left( \mathbb E  \big [\|v^{\delta} - \mathcal{F}(w^{\delta}(s))\|^2 \big]+ \frac{\mathcal{C}}{2(s +a)^2} \right). \label{eq:hr:ar:pro}
\end{align}
Thus, on utilizing \eqref{eq:til-phi-F*} and \eqref{eq:hr:ar:pro} {along with $\int_0^t \frac{1}{(s+a)^2}ds \leq \frac{1}{a}$}, one gets 
      \begin{align}
         \tilde{\phi}_{\mathcal{F}} \leq &\, 2\left( \mu-1 + \frac{\mu+1}{\tau} \right) \int_0^t \mathbb E  \big [\|v^{\delta} - \mathcal{F}(w^{\delta}(s))\|^2 \big] ds + \frac{\mathcal{C}(\mu+1)}{a}. \label{eq:til-phi-F}
     \end{align}
For the estimation of $\tilde{\phi}_{\hat{A}}$, using similar arguments to \eqref{eq:phi-A-int} along with  \eqref{as:lam:hr} and (A1), (A3) of Assumption~\ref{Assum}, one obtains
\begin{align} 
    \tilde{\phi}_{\hat{A}} := &\, 2 \mathbb E \int_0^t \lambda^{\delta}(s) \big[ \big \langle x^{\delta}(s)- w^\dagger,  \hat{A}'(x^{\delta}(s))^* [v^{\delta}-\hat{A}(x^{\delta}(s))] \big \rangle \big]ds 
    \leq \int_0^t  \frac{ \lambda^{\delta}(s)}{2} \|v^{\delta}-\hat{A}(w^\dagger)\|^2 ds \notag
    \\
    \leq &\, \int_0^t \lambda^{\delta}(s) \big[ \|v^{\delta}-\hat{A} (\tilde{w})\|^2 + \|\hat{A} (\tilde{w})-\hat{A}(w^\dagger)\|^2   \big] ds
    \leq \int_0^t \lambda^{\delta}(s)  \big( \|v^{\delta}-\hat{A} (\tilde{w})\|^2+ {L}_{\hat{A}}^2 \|w^\dagger-\tilde{w}\|^2  \big) ds\notag
     \\
      \leq &\, \tilde{K}_{\lambda^{\delta}} L_{\mathcal{F}}^2 \int_0^t \mathbb E \big[ \|v^{\delta}-\mathcal{F} (x^{\delta}(s))\|^2 \big] ds. \label{eq:til-phi-A}
\end{align}
Furthermore, by using \eqref{eq:f:hr}, $\tilde{\phi}_{B}$ can be estimated as
\begin{align}
    \tilde{\phi}_{B}:= \int_0^t \mathbb E \big[\|g^{\delta}(s,x^{\delta}(s))\|^2 \big] ds \leq \mu \tilde{\epsilon} L_{\mathcal{F}}^2 \int_0^t \mathbb E \big[ \|v^{\delta} - \mathcal{F}(x^{\delta}(s))\|^2  \big] ds. \label{eq:til-phi-W}
\end{align}
By substituting \eqref{eq:til-phi-F}, \eqref{eq:til-phi-A} and \eqref{eq:til-phi-W} into \eqref{eq:til-phi}, one gets
\begin{align*}
    \mathbb E \big[\|x^{\delta}(t)- &\, w^\dagger \|^2 \big] \leq  \| \tilde{w} -w^\dagger\|^2  - \left[2\left( 1-\mu - \frac{\mu+1}{\tau} \right) - \tilde{K}_{\lambda^{\delta}} L_{\mathcal{F}}^2 - \mu \tilde{\epsilon} L_{\mathcal{F}}^2 \right]\int_0^t  \mathbb E \big[ \|v^{\delta} - \mathcal{F}(x^{\delta}(s))\|^2  \big] ds + \frac{\mathcal{C}(\mu+1)}{a}
\end{align*}
which on putting $\mathcal{C}={3a \rho^2}/{(\mu+1)}$ gives
\begin{align}
    \mathbb E \big[\|x^{\delta}(t)- &\, w^\dagger \|^2 \big] + \mathcal{H}_2 \int_0^t  \mathbb E \big[ \|v^{\delta} - \mathcal{F}(x^{\delta}(s))\|^2  \big] ds \leq  \| \tilde{w} -w^\dagger\|^2  + \frac{\mathcal{C}(\mu+1)}{a} \leq 4 \rho^2. \label{eq:hr:er}
\end{align}
Thus, $x^{\delta}(t) \in B_{2\rho}(w^\dagger)$ for all $t< \hat{t}^*$. Since $\|\tilde{w}-w^\dagger\| \leq \rho$, therefore one obtains $\mathbb E \big[\|w^\delta(t) -\tilde{w} \|^2\big] \leq 2\mathbb E \big[\|w^\delta(t)-w^\dagger\|^2 \big] + 2\|w^\dagger-\tilde{w}\|^2 \leq 10 \rho^2$, which implies that $x^{\delta}(t) \in B_{\sqrt{10}\rho}(\tilde{w})$ for all $t< \hat{t}^*$. This completes the proof.
\end{proof}

\noindent Now, we show that there exists a finite stopping time $\hat{t}^*$ defined in \eqref{def:hat,t} and  $\hat{t}^* \to \infty$ as $\delta \to 0$.

\begin{lemma}\label{l:Hr:arb,t}
Suppose assumptions of Lemma \ref{l:Hr,mon} holds. Then, there exists a finite $\hat{t}^*$, as defined in \eqref{def:hat,t} such that
    $\hat{t}^* \to \infty$ as $\delta \to 0$.
\end{lemma}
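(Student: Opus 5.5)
Finiteness of $\hat{t}^*$ follows from the energy inequality \eqref{eq:hr:er} of Lemma \ref{l:Hr,mon}, by the same contradiction argument as in Theorem \ref{th:uni,t}. The divergence $\hat{t}^*\to\infty$ follows by reading the left inequality of \eqref{def:hat,t} at $t=\hat{t}^*$: since $\|v^\delta-v\|\to0$, it forces $\mathcal{C}/(\hat{t}^*+a)^2\to 0$.

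\textbf{Step 1 (finiteness).} Suppose no finite $\hat{t}^*$ exists. Then for all $t\ge 0$,
\begin{align*}
\mathbb E\big[\|v^{\delta}-\mathcal{F}(x^\delta(t))\|^2\big] + \frac{\mathcal{C}}{(t+a)^2} > \tau^2\|v^\delta-v\|^2,
\end{align*}
and Lemma \ref{l:Hr,mon} applies on all of $[0,\infty)$. Inequality \eqref{eq:hr:er} then gives
\begin{align*}
\mathcal{H}_2\int_0^t \mathbb E\big[\|v^\delta-\mathcal{F}(x^\delta(s))\|^2\big]\,ds \le 4\rho^2 \quad \text{for all } t.
\end{align*}
On the other hand,
\begin{align*}
\int_0^t \mathbb E\big[\|v^\delta-\mathcal{F}(x^\delta(s))\|^2\big]\,ds > \tau^2\|v^\delta-v\|^2\, t - \int_0^t \frac{\mathcal{C}}{(s+a)^2}\,ds \ge \tau^2\|v^\delta-v\|^2\, t - \frac{\mathcal{C}}{a}.
\end{align*}
If $\|v^\delta-v\|>0$, the right-hand side tends to $\infty$ as $t\to\infty$, which contradicts the uniform bound. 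So $\hat{t}^*<\infty$. Because $\mathbb E[\|v^\delta-\mathcal{F}(x^\delta(t))\|^2]$ is continuous in $t$ (the solution has a continuous modification and finite moments by Theorem \ref{th:pre:well}), the infimum defining $\hat{t}^*$ is attained, so the left inequality of \eqref{def:hat,t} holds at $\hat{t}^*$.

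\textbf{Step 2 (divergence).} At $t=\hat{t}^*$, drop the nonnegative residual in the left inequality of \eqref{def:hat,t}:
\begin{align*}
\frac{\mathcal{C}}{(\hat{t}^*+a)^2} \le \tau^2\|v^\delta-v\|^2, \qquad\text{hence}\qquad \hat{t}^* \ge \frac{\sqrt{\mathcal{C}}}{\tau\|v^\delta-v\|} - a.
\end{align*}
By Assumption \ref{as:HR}, $\|v^\delta-v\|\to0$ as $\delta\to0$. Since $\mathcal{C}=3a\rho^2/(\mu+1)>0$ is independent of $\delta$, this gives $\hat{t}^*\to\infty$.

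\textbf{Main obstacle.} The delicate point is Step 1: using \eqref{eq:hr:er} on the whole half-line. Lemma \ref{l:Hr,mon} is only valid for $t<\hat{t}^*$, and its constants must stay uniform; in particular $\tilde{\beta}$ and the ball $B_{\sqrt{10}\rho}(\tilde{w})$ must not depend on $t$. Under the contradiction hypothesis, "$t<\hat{t}^*$" holds for every $t$, so this is consistent. Note that $\hat{t}^*$ is deterministic, because it is defined through expectations. The degenerate case $v^\delta=v$ needs separate treatment. There, Assumption \ref{as:HR} forces $\mathbb E[\|v-\mathcal{F}(x(t))\|^2]\ge0$ only, and the defining inequality may never hold. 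However, this case is irrelevant to the limit statement, so I would restrict to $\|v^\delta-v\|>0$, or interpret $\hat{t}^*=\infty$ there.
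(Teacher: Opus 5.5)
Your proposal is correct and follows the paper's proof essentially step for step. Both argue by contradiction, integrating the negation of \eqref{def:hat,t} against the uniform bound \eqref{eq:hr:er}, and both then drop the residual in the left inequality at $\hat{t}^*$ to get $\mathcal{C}/(\hat{t}^*+a)^2\le\tau^2\|v^\delta-v\|^2$. Your extra remarks (\eqref{eq:hr:er} holding on the whole half-line under the contradiction hypothesis, and setting aside the degenerate case $v^\delta=v$) make explicit assumptions the paper uses silently.
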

 \begin{proof}
     Let us assume that there does not exist any finite $\hat{t}^*$. Then, for every $t \geq 0$, 
\(
        \tau^2 \|v^\delta -v\|^2 < \mathbb E \big[\|v^{\delta}- \mathcal{F}(x^{\delta}(t))\|^2 \big] +  \frac{\mathcal {C}}{(t +a)^2}, 
 \)
which on integrating from $0$ to $t<\hat{t}^*$ and using \eqref{eq:hr:er}, yields
\begin{align*}
   \tau^2 \|v^\delta -v\|^2 t <  \int_0^{t} \mathbb E \big[\|v^{\delta}- \mathcal{F}(x^{\delta}(s))\|^2 \big] ds + \frac{\mathcal{C}}{a} \leq \frac{4\rho^2}{\mathcal{H}_2} + \frac{\mathcal{C}}{a} < \infty.
\end{align*}
Taking $t \to \infty$ leads to a contradiction, and hence $\hat{t}^*$ must be finite. Moreover, from \eqref{def:hat,t}, one gets $\tau^2 \|v^\delta -v\|^2 \geq \frac{\mathcal {C}}{(\hat{t}^* +a)^2}.$ Since $\|v^\delta -v\|^2 \to 0$ as $\delta \to 0$, so $\hat{t}^* \to \infty$ as $\delta \to 0$. 
 \end{proof}
\noindent
We next establish the following lemma, which plays a crucial role in proving the regularizing property of \texttt{D-STAR} method \eqref{D-STAR}. The proof is motivated by the ideas developed in \cite{real2024hanke,zhang2018heuristic}.

\begin{lemma}\label{L:alp}
Suppose assumptions of Lemma \ref{l:Hr,mon} holds and $\tilde{t}^*$ denotes the stopping time associated with Rule \ref{rule:HR}. Then, $\alpha(\tilde{t}^*,v^{\delta}) \to 0$ as $\delta \to 0.$
Consequently, $\mathbb E \big[\|v^{\delta}-\mathcal{F}(x^\delta(\tilde{t}^*))\|^2 \big] \to 0 $ and $\tilde{t}^* \|v^\delta-v\|^2 \to 0$ as $\delta \to 0.$
\end{lemma}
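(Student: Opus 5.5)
Since $\tilde{t}^*$ minimizes $\alpha(\cdot,v^\delta)$ over $[0,t_\infty]$ and $\hat{t}^*\le t_\infty$ (by Lemma \ref{l:Hr,mon} the iterates stay in $B_{\sqrt{10}\rho}(\tilde w)$ up to $\hat t^*$), we have $\alpha(\tilde{t}^*,v^\delta)\le\alpha(\hat{t}^*,v^\delta)$. It therefore suffices to show
\[
\alpha(\hat t^*,v^\delta)=(\hat t^*+a)\,\mathbb E\big[\|v^\delta-\mathcal F(x^\delta(\hat t^*))\|^2\big]\to 0 .
\]

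\textbf{Step 1: a time-averaged residual bound.} We would like the residual at $\hat t^*$ to be controlled by an average of the residual over $[0,\hat t^*]$. We would show that $t\mapsto \mathbb E[\|v^\delta-\mathcal F(x^\delta(t))\|^2]$ is almost non-increasing. To do this, apply It\^o's formula (Theorem \ref{th:pre;ito}) to $\|v^\delta-\mathcal F(x)\|^2$. This gives a drift term $-2\|\mathcal F'(x)^*(v^\delta-\mathcal F(x))\|^2$, a cross term with the data-driven drift, and an Itô correction from the diffusion \eqref{def:f:hr}. The cross term is controlled by \eqref{as:lam:hr} and (A3). The Itô correction carries the factor $\mu\tilde\epsilon\|\mathcal F'(x)^*(v^\delta-\mathcal F(x))\|^2 h(t)^2$, so it is dominated by the same gradient quantity.

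If exact monotonicity proves too delicate, the fallback is a weaker bound. We would show
\[
(\hat t^*+a)\,\mathbb E[\|v^\delta-\mathcal F(x^\delta(\hat t^*))\|^2]\lesssim a\,\mathbb E[\|v^\delta-\mathcal F(\tilde w)\|^2]+\int_0^{\hat t^*}\!\mathbb E[\|v^\delta-\mathcal F(x^\delta(s))\|^2]\,ds+(\text{small}),
\]
by integrating $\frac{d}{dt}\big[(t+a)\,\mathbb E\|\cdot\|^2\big]$. Combined with \eqref{eq:hr:er}, this yields only a uniform bound on $\alpha(\hat t^*,v^\delta)$, not decay.

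\textbf{Step 2: obtaining decay.} To get $\alpha(\hat t^*,v^\delta)\to0$, we would follow \cite{real2024hanke,zhang2018heuristic}. Write $(\hat t^*+a)\,\mathbb E\|\cdot\|^2(\hat t^*)$ and use the near-monotonicity from Step 1 to bound it by
\[
\frac{\hat t^*+a}{\hat t^*-s_0}\int_{s_0}^{\hat t^*}\mathbb E\|\cdot\|^2\,ds \quad\text{for } s_0<\hat t^* .
\]
The tail integral $\int_{s_0}^{\hat t^*}$ is then split against the exact-data flow. By Lemma \ref{lem:stab}, on any fixed interval $[0,s_0]$ the noisy residuals approach the exact ones, and $\int_{s_0}^\infty\|v-\mathcal F(w(s))\|^2\,ds\to0$ as $s_0\to\infty$ by Lemma \ref{lem:exa:mono}.

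For the segment $[s_0,\hat t^*]$, we would apply \eqref{eq:hr:er} with $\tilde w$ replaced by $x^\delta(s_0)$. Its distance to $w^\dagger$ is close to $\|w(s_0)-w^\dagger\|$, which is small by Theorem \ref{th:exa,conv}. The additive $\mathcal C$ contribution would be handled by restarting the estimate of Lemma \ref{l:Hr,mon} at time $s_0$, which gives $\int_{s_0}^{\infty}(s+a)^{-2}\,ds\le (s_0+a)^{-1}$. Since $\hat t^*\to\infty$ by Lemma \ref{l:Hr:arb,t}, the prefactor $(\hat t^*+a)/(\hat t^*-s_0)$ tends to $1$. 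Letting $\delta\to0$ and then $s_0\to\infty$ gives the claim.

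\textbf{Step 3: consequences.} Once $\alpha(\tilde t^*,v^\delta)\to0$, the first consequence is immediate because $a\ge1$:
\[
\mathbb E\|v^\delta-\mathcal F(x^\delta(\tilde t^*))\|^2\le \alpha(\tilde t^*,v^\delta)\to0 .
\]
For the second, Assumption \ref{as:HR} gives
\[
\tilde t^*\,\|v^\delta-v\|^2\le \kappa^{-1}\tilde t^*\,\mathbb E\|\cdot\|^2\le \kappa^{-1}\alpha(\tilde t^*,v^\delta)\to0 .
\]

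The main obstacle is Step 1, namely the (near-)monotonicity of the expected residual under the stochastic dynamics. The multiplicative diffusion and the $\lambda^\delta$-term both perturb the derivative of the residual, and the tangential cone condition with $\mu<1/3$ must absorb them. This is where the restriction $\mu\in(0,1/3)$ and the bound on $\tilde\epsilon$ would be used.
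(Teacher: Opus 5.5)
\paragraph{Assessment.} Your Step 3 is correct and matches the paper. Steps 1--2, however, do not close, so the proof has a genuine gap.

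\paragraph{Where the argument breaks.} By passing to $\alpha(\tilde t^*,v^\delta)\le\alpha(\hat t^*,v^\delta)$, you use the minimality of $\alpha$ at a single time only. You are then forced to prove decay of the residual at the one time $\hat t^*$, and that needs (near-)monotonicity of $t\mapsto\mathbb E\big[\|v^\delta-\mathcal F(x^\delta(t))\|^2\big]$. You do not establish this, and it is not available from the paper's hypotheses, for three reasons.
\begin{enumerate}
\item Theorem \ref{th:pre;ito} applied to $G(x)=\|v^\delta-\mathcal F(x)\|^2$ requires $G_{xx}$, i.e.\ a second Fr\'echet derivative of $\mathcal F$. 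Assumption \ref{Assum} does not provide one.
\item The resulting It\^o correction involves $\mathcal F''$ and $\mathrm{tr}\,\mathcal Q$. It is not controlled by the constraints on $\mu$ and $\tilde\epsilon$.
\item The data-driven cross term $-2\lambda^\delta(t)\big\langle \hat{A}'(x)^*[v^\delta-\hat{A}(x)],\mathcal F'(x)^*[v^\delta-\mathcal F(x)]\big\rangle$ has no sign.
\end{enumerate}

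Step 2 has a further problem. It imports stability and exact-data convergence, and you cite the versions for the diffusion \eqref{def:f} rather than \eqref{def:f:hr}. These results need hypotheses such as $\lambda^\delta\to\lambda$, which are not among the assumptions of Lemma \ref{l:Hr,mon}. Also, the restriction $\mu<1/3$ plays no role in this lemma; it is needed for $\mathcal H_3>0$ in Lemma \ref{lem:hr:mono}.

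\paragraph{The missing idea.} Use the minimality of $\alpha$ at \emph{every} time, not just at $\hat t^*$. For all $t\in[0,\hat t^*]$,
\[
\mathbb E\big[\|v^\delta-\mathcal F(x^\delta(t))\|^2\big]=\frac{\alpha(t,v^\delta)}{t+a}\ge\frac{\alpha(\tilde t^*,v^\delta)}{t+a}.
\]
Integrating over $[0,\hat t^*]$ and applying \eqref{eq:hr:er} gives
\[
\alpha(\tilde t^*,v^\delta)\,\log\Big(\frac{\hat t^*+a}{a}\Big)\le\int_0^{\hat t^*}\mathbb E\big[\|v^\delta-\mathcal F(x^\delta(t))\|^2\big]\,dt\le\frac{4\rho^2}{\mathcal H_2}.
\]
By Lemma \ref{l:Hr:arb,t}, $\hat t^*\to\infty$, so the logarithm diverges and $\alpha(\tilde t^*,v^\delta)\to0$.

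All that is needed is the divergence of $\int (t+a)^{-1}\,dt$ set against a uniformly bounded residual integral. No monotonicity of the residual, no stability, and no exact-data convergence enter. This is the paper's argument; your Step 3 then finishes the proof as written.
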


\begin{proof}
    From Rule \ref{rule:HR}, one can observe that $ \mathbb E \big[\|v^{\delta}-\mathcal{F}(x^{\delta}(t))\|^2 \big] = \frac{\alpha(t,v^{\delta})}{t+a}\geq \frac{\alpha(\tilde{t}^*,v^{\delta})}{t+a}, $
    which on integrating over $0$ to $\hat{t}^*$ gives
    \begin{align*}
    \alpha(\tilde{t}^*,v^{\delta}) \log \Big( \frac{\hat{t}^* +a}{a} \Big) = \int_0^{\hat{t}^*} \frac{\alpha(\tilde{t}^*,v^{\delta})}{t+a} dt \leq \int_0^{\hat{t}^*} \mathbb E \big[\|v^{\delta}-\mathcal{F}(x^{\delta}(t))\|^2 \big] dt.
    \end{align*}
Hence,
\(
    \alpha(\tilde{t}^*,v^{\delta})  \leq \frac{1}{\log \Big( \frac{\hat{t}^* +a}{a} \Big)} \int_0^{\hat{t}^*} \mathbb E \big[\|v^{\delta}-\mathcal{F}(x^{\delta}(t))\|^2 \big] dt.
\)
Thus, on using \eqref{eq:hr:er} and Lemma \ref{l:Hr:arb,t} one obtains $\alpha(\tilde{t}^*,v^{\delta}) \to 0 $ as $\delta \to 0$.
 Furthermore, from Assumption \ref{as:HR}, one can conclude the desired result.
\end{proof}


\begin{lemma}\label{lem:hr:mono}
Suppose assumptions of Lemma \ref{l:Hr,mon} holds and $x^{\delta}(t)$ denotes the solution of  \texttt{D-STAR} method \eqref{D-STAR}. Let $\mathcal{H}_3:= 1-3\mu- \tilde{K}_{\lambda^{\delta}} L_{\mathcal{F}}^2- \mu \tilde{\epsilon} L_{\mathcal{F}}^2 >0$, where $1-3\mu-\mu \tilde{\epsilon} L_{\mathcal{F}}^2>0$ as  $\mu \in (0,1/3)$ and $\tilde{\epsilon} \in \Big(0, \frac{2-2\mu}{\mu L_{\mathcal{F}}^2} \Big)$.
Then, for sufficiently small $\delta$, $x^{\delta}(t) \in B_{\sqrt{10}\rho}(\tilde{w})$ for all $t\leq \tilde{t}^*$.
\end{lemma}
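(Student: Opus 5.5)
If $\tilde{t}^*<\hat{t}^*$, Lemma \ref{l:Hr,mon} already gives the claim. I therefore concentrate on the case $\hat{t}^*\le t\le \tilde{t}^*$. Here the bound \eqref{def:hat,t} on $\|v^\delta-v\|$ is no longer available, so I will not absorb the noise term into the residual. Instead I control it using the smallness of $\tilde t^*\|v^\delta-v\|^2$ from Lemma \ref{L:alp}.

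Starting from the Itô identity \eqref{eq:til-phi} on $[0,t]$, I estimate $\tilde{\phi}_{\mathcal{F}}$ as in \eqref{eq:til-phi-F*}. The cross term I bound by Young's inequality with weight chosen to match $\mathcal{H}_3$:
\[
2(\mu+1)\|v^\delta-v\|\,\mathbb E\big[\|v^\delta-\mathcal{F}(x^\delta(s))\|\big]\le (1+\mu)\,\mathbb E\big[\|v^\delta-\mathcal{F}(x^\delta(s))\|^2\big]+(1+\mu)\|v^\delta-v\|^2 .
\]
Combined with $2(\mu-1)$, this gives the coefficient $3\mu-1$ in front of the residual integral. The bounds \eqref{eq:til-phi-A} and \eqref{eq:til-phi-W} hold verbatim, provided $x^\delta(s)$ stays in the ball where Assumption \ref{Assum} applies. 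Collecting terms yields
\[
\mathbb E\big[\|x^\delta(t)-w^\dagger\|^2\big]+\mathcal{H}_3\int_0^t\mathbb E\big[\|v^\delta-\mathcal{F}(x^\delta(s))\|^2\big]ds\le \|\tilde w-w^\dagger\|^2+(1+\mu)\,t\,\|v^\delta-v\|^2 .
\]
Since $t\le\tilde t^*$, the last term is at most $(1+\mu)\tilde t^*\|v^\delta-v\|^2$, which tends to $0$ by Lemma \ref{L:alp}. For $\delta$ small it is therefore at most $3\rho^2$, and the left side is at most $4\rho^2$. The triangle-type estimate from the end of the proof of Lemma \ref{l:Hr,mon} then gives $\mathbb E\big[\|x^\delta(t)-\tilde w\|^2\big]\le 10\rho^2$.

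The main obstacle is circularity: the estimates assume $x^\delta(s)\in B_{\sqrt{10}\rho}(\tilde w)$ for all $s\le t$. I resolve this with a continuity argument. The map $t\mapsto\mathbb E\big[\|x^\delta(t)-\tilde w\|^2\big]$ is continuous, since the mild solution has a continuous modification with the moment bounds \eqref{eq:pre:MB1}. Let $t_1=\sup\{t\le\tilde t^*: x^\delta(s)\in B_{\sqrt{10}\rho}(\tilde w)\ \forall s\le t\}$; by Lemma \ref{l:Hr,mon}, $t_1\ge\hat t^*$. On $[0,t_1]$ the estimate above holds and gives a strict bound $4\rho^2-\eta$ for some $\eta>0$, uniformly once $\delta$ is small. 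By continuity the process then stays in the ball slightly beyond $t_1$, contradicting maximality unless $t_1=\tilde t^*$.

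A secondary point is that Lemma \ref{L:alp} uses only Lemmas \ref{l:Hr,mon} and \ref{l:Hr:arb,t}, which is what makes the smallness of $\tilde t^*\|v^\delta-v\|^2$ available without circularity. One must also check that the "sufficiently small $\delta$" threshold depends only on $\rho$, $\mu$ and the decay in Lemma \ref{L:alp}, and not on $t$.
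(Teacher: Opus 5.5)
Your proof is correct and follows the paper's argument. You use the same Young split of the cross term, producing the coefficient $3\mu-1$ and hence $\mathcal{H}_3$; the same bound of the leftover noise contribution by $(1+\mu)\tilde{t}^*\|v^\delta-v\|^2$, which Lemma \ref{L:alp} makes at most $3\rho^2$ for small $\delta$; and the same triangle estimate to reach $10\rho^2$. Your write-up is in fact slightly more careful than the paper's: the paper drops the square on $\|v^\delta-v\|$ in \eqref{eq:til-phi-F:2}, although Lemma \ref{L:alp} only gives $\tilde{t}^*\|v^\delta-v\|^2\to 0$, and it leaves implicit the circular use of Assumption \ref{Assum} on the ball, which your continuity argument makes explicit.
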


\begin{proof}
Recall \eqref{eq:til-phi} together with the definitions of $\tilde{\phi}_{\mathcal{F}}$, $\tilde{\phi}_{\hat{A}}$, and $\tilde{\phi}_{B}$ given therein. Now, we first estimate $\tilde{\phi}_{\mathcal{F}}$.  By using Young\textquotesingle{s} inequality, one gets
\(
 2\|v^{\delta} -v\| \mathbb E  \big [\|v^{\delta} - \mathcal{F}(w^{\delta}(s))\| \big] \leq \|v^{\delta} -v\|^2 +\mathbb E  \big [\|v^{\delta} - \mathcal{F}(w^{\delta}(s))\|^2 \big]. 
\)
Combining this inequality with \eqref{eq:til-phi-F*}, $\tilde{\phi}_{\mathcal{F}}$ can be estimated as
     \begin{align}
         \tilde{\phi}_{\mathcal{F}} := &\, 2 \int_0^t \mathbb E \big[ \big \langle w^{\delta}(s)- w^\dagger,\mathcal{F}' (w^{\delta}(t))^* [v^{\delta} - \mathcal{F}(x^{\delta}(t)) ] \big \rangle \big] \notag
         \\
         \leq &\, (3\mu-1) \int_0^t \mathbb E  \big [\|v^{\delta} - \mathcal{F}(w^{\delta}(s))\|^2 \big] ds +  (\mu+1) \| v^{\delta} -v\| t. \label{eq:til-phi-F:2}
     \end{align}
Thus, on utilizing \eqref{eq:til-phi} \eqref{eq:til-phi-A}, \eqref{eq:til-phi-W} and \eqref{eq:til-phi-F:2}, one obtains
\begin{align*}
     \mathbb E \big[\|w^{\delta}(t)- &\, w^\dagger \|^2 \big] =  \| \tilde{w} -w^\dagger\|^2 - \left(1-3\mu- \tilde{K}_{\lambda^{\delta}} L_{\mathcal{F}}^2- \mu \tilde{\epsilon} L_{\mathcal{F}}^2 \right) \int_0^t \mathbb E \big[ \|v^{\delta} - \mathcal{F}(x^{\delta}(t))\|^2  \big] ds  + (\mu+1) \| v^{\delta} -v\| t
\end{align*}
which implies that
\(
\mathbb E \big[\|w^{\delta}(t)- w^\dagger \|^2 \big] + \mathcal{H}_3  \int_0^t \|v^{\delta} - \mathcal{F}(x^{\delta}(t))\|^2 ds \leq \| \tilde{w} -w^\dagger\|^2  + (\mu+1) \| v^{\delta} -v\| \tilde{t}^*.
\)
By Lemma \ref{L:alp}, for sufficiently small $\delta$, $ (\mu+1)\|v^{\delta} -v\| \tilde{t}^* \leq 3\rho^2$. Consequently, $\mathbb E \big[\|w^{\delta}(t)- w^\dagger \|^2 \big] \leq 4\rho^2$, which concludes the proof.
\end{proof}


\subsection{Convergence for exact data}\label{sub:hr:ex}
Now, we prove the convergence of \texttt{D-STAR} method \eqref{D-STAR} with \eqref{def:f:hr} for the exact data $v$.
We emphasize that in the exact data case, the diffusion term vanishes in Rule \ref{rule:MDP} due to its explicit dependence on the noise level $\delta$, whereas it does not vanishes in Rule \ref{rule:HR}. Consequently, with this additional diffusion term, the pseudo-code for the convergence of \texttt{D-STAR} with \eqref{def:f:hr} for the exact data $v$ and the parameters specified in this subsection is analogous to Algorithm~\ref{alg:ex_data}.
The convergence analysis begins with the following lemma.

\begin{lemma}\label{lem:hr:exa:mono}
   Suppose assumptions of Lemma \ref{l:Hr,mon} holds and $w(t)$ denotes the solution of \texttt{D-STAR} method \eqref{D-STAR} with the exact data $v$. 
    Let there exists a constant $\tilde{K}_{\lambda}$ such that
\begin{align}
    \lambda(t) \leq &\, {\tilde{K}_{\lambda} \mathbb E \big [\|\mathcal{F}' (w(t))^* [v-\mathcal{F} (w(t))]\|^2 \big]}/\tilde{\gamma} \quad \text{and} \quad \mathcal{H}_4:= 2(1-  \mu) - \tilde{K}_{\lambda} L_{\mathcal{F}}^2-\mu \tilde{\epsilon} L_{\mathcal{F}}^2  >0,\label{eq:hr:lam:ex}
\end{align}
where $\tilde{\gamma} = \|v-\hat{A}(\tilde{w})\|^2 + 3{L}_{\hat{A}}^2 \rho^2/2 $
and $2(1-  \mu) -\mu \tilde{\epsilon} L_{\mathcal{F}}^2 >0$ as $\tilde{\epsilon} \in \Big(0, \frac{2-2\mu}{\mu L_{\mathcal{F}}^2} \Big)$.
Then, for every $t\geq 0$, the following holds,
    \begin{align}
        \frac{d}{dt} \mathbb E \big[\|w(t)- w^\dagger \|^2 \big] \leq &\, - \mathcal{H}_4 \mathbb E \big[ \|v-\mathcal{F} (w(t))\|^2 \big] \quad \text{and} \quad \int_0^\infty\mathbb E \big[ \|v  -\mathcal{F} (w(t))\|^2 \big]  dt  < \infty. \label{eq:hr:di:ex,x^n,x*}
    \end{align}
\end{lemma}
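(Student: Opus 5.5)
The plan is to repeat the argument of Lemma \ref{lem:mono} with exact data $v$ in place of $v^\delta$, where the noise terms disappear but the state-dependent diffusion \eqref{def:f:hr} does not. Write $w(t)-w^\dagger$ in integral form as in \eqref{SAR:sol}, with $v^\delta$ replaced by $v$ and diffusion $g(t,w(t))=\sqrt{\mu\tilde\epsilon}\,\|\mathcal{F}'(w(t))^*[v-\mathcal{F}(w(t))]\|h(t)$. Apply It\^o's formula (Theorem \ref{th:pre;ito}) to $\|w(t)-w^\dagger\|^2$ and take expectations. The martingale term then vanishes by Theorem \ref{th:pre;stoc-int}, since the integrand is square integrable by the moment bounds \eqref{eq:pre:MB1} and (A5). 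Differentiating in $t$ gives
\begin{align*}
\frac{d}{dt}\mathbb E\big[\|w(t)-w^\dagger\|^2\big]=\phi_{\mathcal{F}}+\phi_{\hat A}+\mathbb E\big[\|g(t,w(t))\|^2\big],
\end{align*}
with $\phi_{\mathcal{F}},\phi_{\hat A}$ defined as in \eqref{eq:mon:der:dif} but with $v$.

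Each term is then bounded in turn. For $\phi_{\mathcal{F}}$, the computation leading to \eqref{eq:phi-F} with $v^\delta=v$ kills the cross term and gives $\phi_{\mathcal{F}}\le 2(\mu-1)\mathbb E[\|v-\mathcal{F}(w(t))\|^2]$. This uses (A2), which is valid because $w(t)$ stays in the ball. For $\phi_{\hat A}$, I follow \eqref{eq:phi-A-int} and \eqref{eq:til-phi-A}: the elementary bound $-r^2+ar\le a^2/4$ gives
\begin{align*}
\phi_{\hat A}\le \tfrac{\lambda(t)}{2}\|v-\hat A(w^\dagger)\|^2\le \lambda(t)\big(\|v-\hat A(\tilde w)\|^2+L_{\hat A}^2\rho^2\big)\le \lambda(t)\tilde\gamma.
\end{align*}
Combining this with \eqref{eq:hr:lam:ex} and (A1) gives $\phi_{\hat A}\le \tilde K_\lambda L_{\mathcal{F}}^2\mathbb E[\|v-\mathcal{F}(w(t))\|^2]$. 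The diffusion term is handled by \eqref{eq:f:hr} with $v$, using $h\le h(0)=1$, which gives the bound $\mu\tilde\epsilon L_{\mathcal{F}}^2\mathbb E[\|v-\mathcal{F}(w(t))\|^2]$. Summing the three bounds yields the differential inequality with constant $\mathcal{H}_4$.

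For the integral bound, integrate the differential inequality from $0$ to $T$. This gives $\mathcal{H}_4\int_0^T\mathbb E[\|v-\mathcal{F}(w(t))\|^2]dt\le\|\tilde w-w^\dagger\|^2\le\rho^2$. Since $\mathcal{H}_4>0$, letting $T\to\infty$ gives finiteness.

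The main obstacle is justifying that $w(t)$ remains in $B_{\sqrt{10}\rho}(\tilde w)$ for all $t\ge0$, so that (A1), (A2) and \eqref{eq:f:hr} apply. I would handle this by a continuity/bootstrap argument. The map $t\mapsto\mathbb E[\|w(t)-w^\dagger\|^2]$ is continuous and starts at a value at most $\rho^2$. As long as $w$ is in the ball, the estimate above shows this quantity is nonincreasing, so it stays at most $\rho^2$. Hence $\mathbb E[\|w(t)-\tilde w\|^2]\le 4\rho^2<10\rho^2$, strictly inside the ball, so the supremum of the admissible times cannot be finite. A secondary technical point is exchanging differentiation and expectation. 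This follows from the integral form of It\^o's formula, because the integrands are continuous in $t$ in mean, by the continuity of the solution and the Lipschitz bounds (A6).
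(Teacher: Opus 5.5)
Your proposal is correct and follows essentially the paper's route. The paper's proof is a one-line appeal to rerunning the It\^o-formula estimates of Lemma~\ref{l:Hr,mon} with $v^\delta=v$ (the $\|v^\delta-v\|$ cross term vanishes, and the bounds on $\tilde\phi_{\mathcal F}$, $\tilde\phi_{\hat A}$, $\tilde\phi_B$ combine into $-\mathcal H_4$) plus the fundamental theorem of calculus, and your continuity/bootstrap argument simply makes explicit the ball invariance that the paper leaves implicit (for exact data the auxiliary time $\hat t^*$ in \eqref{def:hat,t} is infinite, so Lemma~\ref{l:Hr,mon} is meant to cover all $t\ge 0$).
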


\begin{proof}
   The desired result follows from Lemma~\ref{l:Hr,mon} and fundamental theorem of calculus.
\end{proof}

\noindent
Since \texttt{D-STAR} method with the heuristic rule differs from the modified discrepancy principle due to the diffusion term, we need to establish the following convergence result for the exact data case separately.

\begin{theorem}
Let assumptions of Lemma \ref{lem:hr:exa:mono} be satisfied and $w(t)$ be the solution of \texttt{D-STAR} method \eqref{D-STAR} with the exact data $v$. Further, assume that $w^\dagger$ denotes a solution of \eqref{IP}. Then,
    \begin{align*}
       \lim_{t \to \infty} \mathbb E\big[ \| w(t)-w^\dagger\|^2 \big]  =0.
    \end{align*}
\end{theorem}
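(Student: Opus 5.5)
We follow the template of the proof of Theorem \ref{th:exa,conv}, keeping track of the stochastic integral that now survives because the diffusion coefficient \eqref{def:f:hr} does not vanish for exact data. Set $\hat e(t):=w(t)-w^\dagger$. By Lemma \ref{lem:hr:exa:mono}, $t\mapsto \mathbb E[\|\hat e(t)\|^2]$ is non-increasing and bounded below, so it converges to some $\vartheta\ge 0$. We also have $\int_0^\infty \mathbb E[\|v-\mathcal F(w(s))\|^2]\,ds<\infty$. For $r>t$ we use the identity \eqref{eq:ex:di:err}. Its first two terms give $\mathbb E\|\hat e(t)\|^2-\mathbb E\|\hat e(r)\|^2\to 0$, so everything reduces to showing $\mathbb E[\langle \hat e(r)-\hat e(t),\hat e(r)\rangle]\to 0$ as $t\to\infty$, uniformly in $r>t$.

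Now write
\[
\hat e(r)-\hat e(t)=\int_t^r \big(\mathrm F(s)+\lambda(s)\mathrm A(s)\big)\,ds+\int_t^r g(s,w(s))\,dB(s),
\]
with $\mathrm F,\mathrm A$ as in \eqref{eq:st:F}--\eqref{eq:st:A}. The drift part splits into $\psi_{\mathcal F}+\psi_{\hat A}$ exactly as in \eqref{eq:psi-F+A}. We bound $\psi_{\mathcal F}$ by $3(\mu+1)\int_t^r\mathbb E\|v-\mathcal F(w(s))\|^2ds$ as in \eqref{eq:psi-F}, using (A2) and the fact that $w(s)\in B_{\sqrt{10}\rho}(\tilde w)$ (Lemma \ref{l:Hr,mon}). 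We bound $\psi_{\hat A}$ by $\tilde K_\lambda L_{\mathcal F}^2\int_t^r\mathbb E\|v-\mathcal F(w(s))\|^2ds$ as in \eqref{eq:psi-A}, now using \eqref{eq:hr:lam:ex} and the constant $\tilde\gamma$. For this step, $\mathbb E\|v-\hat A(w(s))\|^2$ is controlled through $\|v-\hat A(\tilde w)\|^2+L_{\hat A}^2\mathbb E\|w(s)-\tilde w\|^2$ rather than through (A4).

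The new term is
\[
\psi_B:=\mathbb E\Big[\Big\langle \int_t^r g(s,w(s))\,dB(s),\,\hat e(r)\Big\rangle\Big].
\]
To handle it, we decompose $\hat e(r)=\hat e(t)+(\hat e(r)-\hat e(t))$. The term $\mathbb E\langle\int_t^r g\,dB,\hat e(t)\rangle$ vanishes, because $\hat e(t)$ is $\mathscr F_t$-measurable and the stochastic integral on $[t,r]$ is a martingale increment (Theorem \ref{th:pre;stoc-int}). The cross term with the drift increment is bounded by Cauchy--Schwarz: it is the product of $(\int_t^r\mathbb E\|g\|^2ds)^{1/2}$ and the $L^2$ norm of the drift increment. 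The remaining term equals $\int_t^r\mathbb E\|g(s,w(s))\|^2ds$ by Itô isometry. By \eqref{eq:f:hr} this is at most $\mu\tilde\epsilon L_{\mathcal F}^2\int_t^r\mathbb E\|v-\mathcal F(w(s))\|^2ds$, which tends to zero as $t\to\infty$.

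The main obstacle is the cross term, where we need the drift increment in $L^2$. Bounding it by $(r-t)$ times a sup would not decay. We will instead reverse the order: start from $\hat e(r)$ itself, which is bounded, $\mathbb E\|\hat e(r)\|^2\le\|\tilde w-w^\dagger\|^2$. Then apply Cauchy--Schwarz directly,
\[
|\psi_B|\le \Big(\int_t^r \mathbb E\|g\|^2ds\Big)^{1/2}\big(\mathbb E\|\hat e(r)\|^2\big)^{1/2}\le \rho\,\Big(\mu\tilde\epsilon L_{\mathcal F}^2\int_t^\infty\mathbb E\|v-\mathcal F(w(s))\|^2ds\Big)^{1/2},
\]
which tends to zero as $t\to\infty$ by Lemma \ref{lem:hr:exa:mono}. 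This avoids the drift increment altogether. Collecting all terms, $\mathbb E\|\hat e(t)-\hat e(r)\|^2\to0$, so $\{w(t)\}$ is Cauchy in $L^2(\Omega;\mathcal W)$ and has a limit $w^*$. Finally, we identify the limit as in Theorem \ref{th:exa,conv}. Since $\mathbb E\|v-\mathcal F(w(t))\|^2$ is integrable, along a sequence $t_k\to\infty$ the residual tends to zero, so $\mathcal F(w^*)=v$ by continuity. Taking $w^\dagger=w^*$ (the monotonicity holds for any solution in the ball) gives the claim.
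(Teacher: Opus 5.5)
Your argument follows the paper's proof step for step: the identity \eqref{eq:ex:di:err}, the split into $\psi_{\mathcal F}+\psi_{\hat A}+\psi_B$, and, once you drop the decomposition $\hat e(r)=\hat e(t)+(\hat e(r)-\hat e(t))$, exactly the paper's direct Cauchy--Schwarz bound \eqref{eq:til:psi-W} for the stochastic term. The step that does not go through as written, in your proposal and in the paper alike, is the estimate $\psi_{\mathcal F}\le 3(\mu+1)\int_t^r\mathbb E\|v-\mathcal F(w(s))\|^2\,ds$ for an \emph{arbitrary} $r>t$. Condition (A2) only gives $\|\mathcal F'(w(s))(w(r)-w(s))\|\le(1+\mu)\big(\|v-\mathcal F(w(r))\|+\|v-\mathcal F(w(s))\|\big)$. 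The constant $3(\mu+1)$ therefore needs $\mathbb E\|v-\mathcal F(w(r))\|^2\le\mathbb E\|v-\mathcal F(w(s))\|^2$ for all $s\in[t,r]$. Lemma \ref{lem:hr:exa:mono} gives monotonicity of the error and integrability of the residual, but not monotonicity of $s\mapsto\mathbb E\|v-\mathcal F(w(s))\|^2$. There is no reason to expect it here, since both the data-driven drift and the diffusion can increase the residual. Without it you are left with $\int_t^r\mathbb E\big[\|v-\mathcal F(w(s))\|\,\|v-\mathcal F(w(r))\|\big]\,ds$, which integrability alone does not make small uniformly in $r$: Cauchy--Schwarz leaves the factor $\big((r-t)\,\mathbb E\|v-\mathcal F(w(r))\|^2\big)^{1/2}$, which need not stay bounded.

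The missing idea is the minimal-residual device of \cite{hanke1995convergence}. For $t<r'$, choose $r\in[t,r']$ where the continuous map $s\mapsto\mathbb E\|v-\mathcal F(w(s))\|^2$ attains its minimum on $[t,r']$. Write $\hat e(t)-\hat e(r')=(\hat e(t)-\hat e(r))+(\hat e(r)-\hat e(r'))$ and expand both squared norms as in \eqref{eq:ex:di:err}, pairing the increments with $\hat e(r)$. The $3(\mu+1)$ bound is then legitimate for all $s\in[t,r']$, and the $\hat A$-terms are handled as you do. The stochastic term on $[t,r]$ is your Cauchy--Schwarz bound, and on $[r,r']$ it has zero mean because $\hat e(r)$ is $\mathscr F_r$-measurable.

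Your final identification of the limit is more careful than the paper, which only asserts that the Cauchy family converges to $w^\dagger$. Note, though, that $w^*$ is in general random, because the diffusion does not vanish for exact data. For linear $\mathcal F$ with $\lambda\equiv 0$ and nontrivial null space, the noise injected into that null space is never damped. So what you obtain is convergence to some, possibly random, solution. Your appeal to monotonicity for $w^*$ is unnecessary, since $L^2$-convergence already gives $\mathbb E\|w(t)-w^*\|^2\to0$. It would also not be justified: $w^*$ is not $\mathscr F_0$-measurable, so the martingale term in It\^o's formula no longer drops out.
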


\begin{proof}
Let us define $\hat{e}(t) := w(t) - w^\dagger$ and we want to show that  $\{\hat{e}(t)\}_{t \geq 0}$ is a Cauchy sequence. Then, for any $r > t$, 
 \begin{align*}
     \mathbb E \big[\|\hat{e}(t)-\hat{e}(r)\|^2 \big] = \mathbb E \big[\|\hat{e}(t)\|^2 \big] - \mathbb E \big[\|\hat{e}(r)\|^2 \big] + 2 \mathbb E \big[ \langle \hat{e}(r)-\hat{e}(t),\hat{e}(r) \rangle \big].
 \end{align*}
Since Lemma \ref{lem:hr:exa:mono} establishes that both $\mathbb{E}[\Vert{}\hat{e}(t)\Vert{}^2]$ and $\mathbb{E}[\Vert{}\hat{e}(r)\Vert{}^2]$ decrease monotonically to some $\vartheta \geq 0$. Thus, it suffices to consider only the last term on the right-hand side of the above equation. Accordingly, for exact data $v$, using \eqref{D-STAR} together with \eqref{def:f:hr}, one gets
   \begin{align}
     \mathbb E  \big[   \langle \hat{e}(r) -\hat{e}(t),\hat{e}(r) \rangle \big] = &\,  
       \mathbb E \int_t^r  \big[ \langle v- \mathcal{F}(w(s)),\mathcal{F}' (w(s))\hat{e}(r)\rangle \big] ds + \mathbb E \int_t^r  \big[ \lambda(s) \langle  v-\hat{A}(w(s)), \hat{A}'(w(s))\hat{e}(r) \rangle \big] ds \notag
     \\
     & + \mathbb E \Big[ \Big \langle \int_t^r \| \mathcal{F}' (w(s))^* [v- \mathcal{F}(w(s)) ]\| h(s) dB(s), \hat{e}(r) \Big \rangle \Big]
     := \tilde{\psi}_{\mathcal{F}} + \tilde{\psi}_{\hat{A}} + \tilde{\psi}_B, \label{psi*}
   \end{align}    
    where $\tilde{\psi}_{\mathcal{F}}$, $\tilde{\psi}_{\hat{A}}$ represents the term associated with $\mathcal{F}$, $\hat{A}$, respectively, and $\tilde{\psi}_B$ denotes the term containing stochastic integral.
    By applying arguments similar to \eqref{eq:psi-F}, $\tilde{\psi}_{\mathcal{F}}$ can be estimated as 
    \begin{align}
        \tilde{\psi}_{\mathcal{F}}  := &\, \mathbb E \int_t^r \big[ \langle v- \mathcal{F}(w(s)),\mathcal{F}' (w(s))(w(r)-w^\dagger) \rangle \big] ds
        \leq  3(\mu+1) \int_t^r \mathbb E \big[ \|v-\mathcal{F}(w(s))\|^2 \big] ds. \label{eq:til:psi-F}
    \end{align}
   For the estimation of $\tilde{\psi}_{\hat{A}}$, by using (A3) of Assumption \ref{Assum} and H\"older\textquotesingle{s} inequality, one obtains
    \begin{align*}
         \tilde{\psi}_{\hat{A}} := &\, \mathbb E \int_t^r \big[ \lambda(s) \langle  v-\hat{A}(w(s)), \hat{A}'(w(s))\hat{e}(r) \rangle \big] ds 
         \leq  L_{\hat{A}} \int_t^r \lambda(s)  \big(\mathbb E \big[\|v-\hat{A}(w(s))\|^2 \big] \big)^{1/2} \big( \mathbb E \big[\|\hat{e}(r)\|^2 \big] \big)^{1/2} ds.
    \end{align*}
By triangle inequality and (A3) of Assumption \ref{Assum}, one gets 
\begin{align*}
    \|v-\hat{A}(w(s))\| \leq \|v-\hat{A} (\tilde{w})\| + \|\hat{A} (\tilde{w})-\hat{A}(w(s))\| \leq \|v-\hat{A} (\tilde{w})\|+ {L}_{\hat{A}} \|w(s)-\tilde{w}\|
\end{align*}
which on using  Young\textquotesingle{s} inequality and (A1) of Assumption \ref{Assum} together with \eqref{eq:hr:lam:ex}, yields
\begin{align} 
     \tilde{\psi}_{\hat{A}} \leq &\,   L_{\hat{A}} \int_t^r \lambda(s)  \big( 2\|v-\hat{A} (\tilde{w})\|^2+ 2{L}_{\hat{A}}^2\mathbb E \|w(s)-\tilde{w}\|^2 \big)^{1/2} \|w^\dagger-\tilde{w}\| ds \notag
      \\
      \leq &\, \int_t^r \lambda(s)  \big( \|v-\hat{A}(\tilde{w})\|^2 + 3{L}_{\hat{A}}^2 \|w^\dagger-\tilde{w}\|^2 /2 \big)ds 
      \leq  \tilde{K}_{\lambda^\delta} L_{\mathcal{F}}^2 \int_t^r  \mathbb E \|v- \mathcal{F}(w(s))\|^2 ds. \label{eq:til:psi-A}
\end{align}
Now, for the estimation of $\tilde{\psi}_B$, by using H\"older\textquotesingle{s} inequality, Theorem \ref{th:pre;stoc-int} along with \eqref{eq:hr:lam:ex} and (A1) of Assumption \ref{Assum}, 
   \begin{align}
     \tilde{\psi}_B := &\, \mathbb E \Big[ \Big \langle \int_t^r \|\mathcal{F}' (w(s))^* [v- \mathcal{F}(w(s))] \|h(s) dB(s), \hat{e}(r) \Big \rangle \Big] \notag
     \\ 
     \leq &\, \Big( \mathbb E \Big[  \Big \|\int_t^r \| \mathcal{F}' (w(s))^* [v- \mathcal{F}(w(s)) ]\| h(s)dB(s) \Big \|^2 \Big] \Big)^{1/2} \big( \mathbb E  \big[ \|\hat{e}(r)\|^2 \big] \big)^{1/2} \notag
       \\
       \leq &\, \Big( \mathbb E \Big[  \int_t^r \|\mathcal{F}' (w(s))^* [v- \mathcal{F}(w(s)) ]\|^2 ds \Big] \Big)^{1/2}  \big( \mathbb E \big[ \|\hat{e}(r)\|^2 \big] \big)^{1/2}  \notag
       \\
       \leq &\,
       L_{\mathcal{F}} \Big(  \int_t^r \hspace{-0.1cm} \mathbb E  \big[\| v- \mathcal{F}(w(s))\|^2 \big] ds \Big)^{1/2} \hspace{-0.09cm} \|w^\dagger-\tilde{w}\|. \label{eq:til:psi-W} 
   \end{align}
 Thus, on utilizing \eqref{eq:til:psi-F}, \eqref{eq:til:psi-A} and \eqref{eq:til:psi-W} into \eqref{psi*}, one gets
 \begin{align*}
     \mathbb E \big[ \langle \hat{e}(r)-\hat{e}(t),\hat{e}(r) \rangle \big] \leq &\, \big( 3(\mu+1) + \tilde{K}_{\lambda^\delta} L_{\mathcal{F}}^2  \big) \int_t^r \mathbb E \big[ \|v-\mathcal{F}(w(s))\|^2 \big] ds 
     + L_{\mathcal{F}} \Big(  \int_t^r \mathbb E  \| v- \mathcal{F}(w(s))\|^2 ds \Big)^{1/2}  \|w^\dagger-\tilde{w}\|
 \end{align*}
which on using \eqref{eq:hr:di:ex,x^n,x*} yields
\(
      \lim_{t \to \infty} \mathbb E \big[ \langle \hat{e}(r)-\hat{e}(t),\hat{e}(r) \rangle \big] =0.
\)
Hence, $\{\hat{e}(t)\}_{t \ge 0}$ is a Cauchy sequence. Consequently, $\{w(t)\}_{t \ge 0}$ is also a Cauchy sequence and converges to $w^\dagger$, which completes the proof.
\end{proof}

\begin{remark}
    By applying  Jensen\textquotesingle{s} inequality, one obtains $ \| \mathbb E[w(t)]-w^\dagger\|^2 \leq \mathbb E \big[\|w(t)-w^\dagger \|^2 \big]$, which gives \( \lim_{t \to \infty}  \| \mathbb E[w(t)]-w^\dagger\|^2  =0.\)  Hence, one gets $\mathbb E[w(t)] \to w^\dagger$ as $t \to \infty$.
\end{remark}

\subsection{Convergence for noisy data}\label{sub:hr:noisy}
We now present the main result of this section, which establishes the regularizing property of \texttt{D-STAR} method \eqref{D-STAR} with \eqref{def:f:hr} for noisy data $v^\delta$ under  Rule~\ref{rule:HR}. Before that, we prove the following stability lemma.
\begin{lemma}\label{lem:stab:Heu}
Suppose Assumptions \ref{Assum}, \ref{as:HR} hold and $\lambda^{\delta}(t) \to \lambda(t)$ as $\delta \to 0$ for any fixed $t$. Further, Let $x^\delta(t)$ be the solution of \texttt{D-STAR} method \eqref{D-STAR} and $w(t)$ denotes its solution with exact data $v$. Then, 
    \begin{align*}
        \lim_{\delta \to 0 } \mathbb E \big [\|x^{\delta}(t)-w(t)\|^2 \big] =0.
    \end{align*}
\end{lemma}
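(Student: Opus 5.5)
The plan follows the proof of Lemma \ref{lem:stab}; the new feature is that the diffusion coefficient \eqref{def:f:hr} now depends on the state. It therefore cannot be bounded by a quantity of order $\delta$, and must be compared between the noisy and exact trajectories.

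First, subtract the integral forms of the noisy solution $x^\delta(t)$ and the exact-data solution $w(t)$, both started at $\tilde w$. With the notation \eqref{eq:st:F}--\eqref{eq:st:A}, write
\begin{align*}
x^\delta(t)-w(t) = \int_0^t \big(\mathrm F^\delta(s)-\mathrm F(s)\big)ds + \int_0^t \big(\lambda^\delta(s)\mathrm A^\delta(s)-\lambda(s)\mathrm A(s)\big)ds + \int_0^t \sqrt{\mu\tilde\epsilon}\, h(s)\big(\|\mathrm F^\delta(s)\|-\|\mathrm F(s)\|\big)dB(s).
\end{align*}
Then apply $\|p+q+r\|^2\le 3(\|p\|^2+\|q\|^2+\|r\|^2)$, Hölder in time for the two drift integrals, and the Itô isometry of Theorem \ref{th:pre;stoc-int} for the stochastic one, with $\mathrm{tr}\,\mathcal Q$ absorbed into the constant.

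The drift terms are handled exactly as in \eqref{eq:di,fn,f} and \eqref{eq:An,f}, using (A1), (A3), (A6) and $\lambda^\delta\le\tilde\Gamma$. This gives bounds by $\tilde L^2\,\mathbb E\|x^\delta(s)-w(s)\|^2$, plus multiples of $\|v^\delta-v\|^2$, plus the term $|\lambda^\delta(s)-\lambda(s)|^2\,\mathbb E\|\mathrm A(s)\|^2$.

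The stochastic term is the main obstacle. Here I use the reverse triangle inequality, $\big|\|\mathrm F^\delta(s)\|-\|\mathrm F(s)\|\big|\le\|\mathrm F^\delta(s)-\mathrm F(s)\|$, together with $|h(s)|\le h(0)=1$. Its contribution is then at most
\[
3\mu\tilde\epsilon\,\mathrm{tr}(\mathcal Q)\int_0^t\mathbb E\|\mathrm F^\delta(s)-\mathrm F(s)\|^2ds,
\]
which is controlled by the same estimate \eqref{eq:di,fn,f}. This is precisely the Lipschitz property that makes the state-dependent diffusion harmless. One must also check that both trajectories stay in the region where (A6) applies. This holds because Assumption \ref{Assum} is imposed on $B_{\sqrt{10}\rho}(\tilde w)$, and Lemmas \ref{l:Hr,mon} and \ref{lem:hr:mono} keep $x^\delta$ there on the relevant interval; alternatively, as in Lemma \ref{lem:well}, one takes the assumptions globally.

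Collecting the terms gives, for fixed $t$, an inequality of the form
\[
\mathbb E\|x^\delta(t)-w(t)\|^2\le C_1(t)\int_0^t\mathbb E\|x^\delta(s)-w(s)\|^2ds + C_2(t)\|v^\delta-v\|^2 + 6t\int_0^t|\lambda^\delta(s)-\lambda(s)|^2\,\mathbb E\|\mathrm A(s)\|^2ds,
\]
with $C_1(t)=3(t+\mu\tilde\epsilon\,\mathrm{tr}\mathcal Q)\cdot 2(\tilde L_{\mathcal F}^2+\tilde\Gamma^2\tilde L_{\hat A}^2)$. Grönwall's lemma then bounds the error by $e^{tC_1(t)}$ times the last two terms.

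Finally, let $\delta\to0$. The term $\|v^\delta-v\|\to0$ by Assumption \ref{as:HR}. For the last term, $\mathbb E\|\mathrm A(s)\|^2$ is bounded on $[0,t]$ by Proposition \ref{R:lin;gr} and the moment bound \eqref{eq:pre:MB1}, and $|\lambda^\delta-\lambda|\le 2\tilde\Gamma$, so dominated convergence together with the pointwise convergence $\lambda^\delta(s)\to\lambda(s)$ sends it to zero. This yields the claim.
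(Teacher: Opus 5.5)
Your proposal is correct and follows the same route as the paper's proof: the same three-term decomposition with a single stochastic integral against the common $B$, Hölder plus the Itô isometry, the bounds \eqref{eq:di,fn,f} and \eqref{eq:An,f}, then Grönwall and dominated convergence. You also make explicit several steps the paper leaves implicit: the estimate $\bigl|\|\mathrm{F}^\delta(s)\|-\|\mathrm{F}(s)\|\bigr|\le\|\mathrm{F}^\delta(s)-\mathrm{F}(s)\|$ with $|h(s)|\le 1$, the $\mathrm{tr}\,\mathcal{Q}$ factor, and the boundedness of $\mathbb E\|\mathrm{A}(s)\|^2$ that dominated convergence requires.
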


\begin{proof}
Recall $\mathrm{F}^\delta(s), \mathrm{F}{(s)}$ and $\mathrm{A}^\delta(s), \mathrm{A}(s)$ from \eqref{eq:st:F}.
Then, from \eqref{D-STAR} along with \eqref{def:f:hr}, 
\begin{align*}
    x^{\delta}(t) -w(t) = &\, \int_0^t \big(\mathrm{F}^\delta(s) - \mathrm{F}(s) \big)  ds 
     + \int_0^t \big( \lambda^{\delta}(s)   \mathrm{A}^\delta(s)  - \lambda(s) \mathrm{A}(s) \big) ds +
     \sqrt{\mu \tilde{\epsilon}} \int_0^t h(s) \big( \|\mathrm{F}^\delta(s)\| - \|\mathrm{F}(s)\| \big) dB(s)
\end{align*}
which on using an elementary inequality $\|p+q+r\|^2\leq 3\|p\|^2+3\|q\|^2+3\|r\|^2$, for all $p,q,r \geq 0$, along with H\"older\textquotesingle{s} inequality and Theorem \ref{th:pre;stoc-int}, yields
    \begin{align*}
    \mathbb E \big[ \|x^{\delta}(t)  -w(t)\|^2 \big] \leq &\, 3t  \int_0^t \mathbb E \big[ \|\mathrm{F}^\delta(s) - \mathrm{F}(s) \|^2 \big] ds \notag 
    \\
    & +  3t \int_0^t \mathbb E \big[ \|\lambda^{\delta}(s)   \mathrm{A}^\delta(s)  - \lambda(s) \mathrm{A}(s)\|^2 \big] ds   + 3{\mu \tilde{\epsilon}} \int_0^t |h(s)|^2 \mathbb E \big[\| \mathrm{F}^\delta(s) - \mathrm{F}(s)\|^2 \big] ds
    \\
    \leq &\, 3(t +\mu \tilde{\epsilon})  \int_0^t \mathbb E \big[ \|\mathrm{F}^\delta(s) - \mathrm{F}(s) \|^2 \big] ds +  3t \int_0^t \mathbb E \big[ \|\lambda^{\delta}(s)   \mathrm{A}^\delta(s)  - \lambda(s) \mathrm{A}(s)\|^2 \big] ds. 
\end{align*}
Now, on using \eqref{eq:di,fn,f} and  \eqref{eq:An,f}, one obtains
\begin{align*}
    \mathbb E \big[ \|x^{\delta}(t)  -w(t)\|^2 \big] \leq &\, 6 \big( (t + \mu \tilde{\epsilon})\tilde{L}_\mathcal{F}^2+ t\Gamma^2 L_{\hat{A}}^2 \big) \big) \int_0^t \mathbb E \big[ \|x^{\delta}(s)-w(s)\|^2 \big]  ds 
    \\
    & + 6t\int_0^t (\lambda^{\delta}(s)-\lambda(s)) \mathbb E \big[ \| \mathrm{A}(s)\|^2 \big] ds
   + 6\big( (t + \mu \tilde{\epsilon})\tilde{L}_\mathcal{F}^2+ t\Gamma^2 L_{\hat{A}}^2 \big)  \|v^\delta-v\|^2.
\end{align*}

Since $\lambda^\delta(s) \to \lambda (s)$ as $\delta \to 0$, therefore by applying Gr\"onwall\textquotesingle{s} lemma and dominated convergence theorem, one obtains the  result.
\end{proof}
\begin{theorem}
Let assumptions of Lemma \ref{l:Hr,mon}, \ref{lem:stab:Heu} and Theorem \ref{sub:hr:ex} be satisfied. Assume $\tilde{t}^*$ be the stopping time defined in Rule  \ref{rule:HR}. Then, there exists a solution $w^\dagger$ to \eqref{IP} such that
    \begin{align*}
        \lim_{\delta \to 0} \mathbb E \big[ \|x^\delta(\tilde{t}^*)-w^\dagger\|^2 \big] =0.
    \end{align*}
\end{theorem}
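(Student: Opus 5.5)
We follow the structure of the proof of Theorem \ref{th:main;SDP}, but replace the monotonicity supplied by Rule \ref{rule:MDP} with the estimates from Lemmas \ref{L:alp} and \ref{lem:hr:mono}. Take a sequence $\delta_T\to 0$ with data $v^{\delta_T}$ and heuristic stopping times $\tilde t^*_T:=\tilde t^*(v^{\delta_T})$. After passing to subsequences it suffices to treat two cases: $\sup_T \tilde t^*_T<\infty$ and $\tilde t^*_T\to\infty$.

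\textbf{Bounded case.} Pass to a further subsequence with $\tilde t^*_T\to \bar t<\infty$.
\begin{enumerate}
\item Lemma \ref{L:alp} gives $\mathbb E[\|v^{\delta_T}-\mathcal F(x^{\delta_T}(\tilde t^*_T))\|^2]\to 0$.
\item Lemma \ref{lem:stab:Heu}, together with continuity in $t$ of $\mathbb E\|x^\delta(t)-w(t)\|^2$, gives $x^{\delta_T}(\tilde t^*_T)\to w(\bar t)$ in mean square.
\item Continuity of $\mathcal F$ (A1) and boundedness of its derivative on $B_{\sqrt{10}\rho}(\tilde w)$ then yield $\mathbb E\|v-\mathcal F(w(\bar t))\|^2=0$, so $w(\bar t)$ solves \eqref{IP}. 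Lemma \ref{lem:hr:mono} keeps all iterates in the ball, which justifies the Lipschitz bound.
\item Set $w^\dagger:=w(\bar t)$ and conclude $\mathbb E\|x^{\delta_T}(\tilde t^*_T)-w^\dagger\|^2\to0$.
\end{enumerate}
As in \cite[Theorem 2.4]{hanke1995convergence}, the solution here is a fixed point of the exact flow.

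\textbf{Unbounded case.} We may assume $\tilde t^*_T$ is increasing. The difficulty is that Lemma \ref{lem:mono}'s monotonicity for $t<t^*$ is not available. We replace it with a near-monotonicity statement on $[0,\tilde t^*]$ for fixed $\bar T<T$. Repeating the computation of Lemma \ref{lem:hr:mono} between times $s_1:=\tilde t^*_{\bar T}$ and $s_2:=\tilde t^*_T$ (Itô's formula from $s_1$, with the bounds \eqref{eq:til-phi-A}, \eqref{eq:til-phi-W}, \eqref{eq:til-phi-F:2}) should give
\[
\mathbb E\|x^{\delta_T}(\tilde t^*_T)-w^\dagger\|^2\le \mathbb E\|x^{\delta_T}(\tilde t^*_{\bar T})-w^\dagger\|^2+(\mu+1)\|v^{\delta_T}-v\|\,\tilde t^*_T .
\]
We then split $\mathbb E\|x^{\delta_T}(\tilde t^*_{\bar T})-w^\dagger\|^2\le 2\mathbb E\|x^{\delta_T}(\tilde t^*_{\bar T})-w(\tilde t^*_{\bar T})\|^2+2\mathbb E\|w(\tilde t^*_{\bar T})-w^\dagger\|^2$:
\begin{enumerate}
\item The first term vanishes as $T\to\infty$ by Lemma \ref{lem:stab:Heu}, since $\bar T$ is fixed.
\item The second term is small for large $\bar T$ by the exact-data convergence theorem of Subsection \ref{sub:hr:ex}.
\end{enumerate}

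\textbf{Main obstacle.} The difficult point is the remainder $(\mu+1)\|v^{\delta_T}-v\|\tilde t^*_T$. Lemma \ref{L:alp} only gives $\tilde t^*\|v^\delta-v\|^2\to0$, not $\tilde t^*\|v^\delta-v\|\to0$. I would first redo the Young step in \eqref{eq:til-phi-F:2} with a weight $\eta$:
\[
2\|v^\delta-v\|\,\mathbb E\|v^\delta-\mathcal F(x^\delta)\|\le \eta\,\mathbb E\|v^\delta-\mathcal F(x^\delta)\|^2+\eta^{-1}\|v^\delta-v\|^2 .
\]
Taking $\eta$ small enough to be absorbed by the negative coefficient (possible since $\mu<1/3$) turns the remainder into $C\|v^\delta-v\|^2\tilde t^*$, which tends to zero by Lemma \ref{L:alp}. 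This closes the argument, and Assumption \ref{as:HR} enters only through Lemma \ref{L:alp}.
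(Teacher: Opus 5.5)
Your proposal is correct and follows the paper's two-case argument. It is in fact more complete: the paper dismisses the unbounded case as ``analogous to Theorem~\ref{th:main;SDP}'', although the monotonicity of Lemma~\ref{lem:mono} is not available under Rule~\ref{rule:HR}, and your It\^o estimate between $\tilde t^*_{\bar T}$ and $\tilde t^*_T$ is exactly the missing substitute. One small correction in the bounded case: to pass from $\tilde t^*_T$ to $\bar t$, you need the Gr\"onwall bound of Lemma~\ref{lem:stab:Heu}, which holds uniformly for $t$ in compact intervals, together with mean-square continuity of $w$; continuity of $t\mapsto\mathbb E\|x^\delta(t)-w(t)\|^2$ for each fixed $\delta$ is not enough.

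Your ``main obstacle'' is only a missing exponent in the paper. The unweighted Young step behind \eqref{eq:til-phi-F:2} already produces the remainder $(\mu+1)\|v^\delta-v\|^2t$, with coefficient $2(\mu-1)+(\mu+1)=3\mu-1<0$, which is where $\mu<1/3$ is used. So $\eta=1$ suffices, and Lemma~\ref{L:alp} closes the argument as you say.
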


\begin{proof}
   Assume $\delta_T$ be a sequence such that $\delta_T \to 0$ as $T \to \infty$ and ${v}^{\delta_T}$ denotes the corresponding noisy data sequence. For the pair 
   $(\delta_T,v^{\delta_T})$, let $\tilde{t}^*_T:= \tilde{t}^*(v^{\delta_T})$ denotes the stopping time given in Rule \ref{rule:HR}. Then, we need to analyze two cases: $ \sup_{T \geq 0} \tilde{t}^*_T < \infty$ and $ \lim_{T \to \infty } \tilde{t}^*_T \to \infty$.  For the first case, consider a convergent subsequence $\{\tilde{t}^*_{T}\}_{T\geq 0}$ such that $\tilde{t}^*_{T} \to \tilde{t}$ as $T \to \infty$. 
    Lemma \ref{lem:stab:Heu} along with continuity of $\mathcal{F}$, implies that
   \(
        \lim_{T \to \infty} \mathbb E \big[\|\mathcal{F}(w(\tilde{t}))- \mathcal{F}(w^{\delta_T}(\tilde{t}))\|^2 \big] =0
\) 
Moreover,
    \begin{align*}
      \mathbb E \big[\|\mathcal{F}(w(\tilde{t}))-v\|^2 \big] \leq &\, 3 \mathbb E \big[ \|\mathcal{F}(w(\tilde{t}))- \mathcal{F}(w^{\delta_T}(\tilde{t}))\|^2 \big] + 3 \mathbb E \big[ \|\mathcal{F}(w^{\delta_T}(\tilde{t}))-v^{\delta_T}\|^2 \big] +   3 \|v^{\delta_T}-v\|^2.
    \end{align*}
  which on taking $T \to \infty$ and using Lemma \ref{L:alp} gives $\mathcal{F}(w(\tilde{t}))= v$. Therefore, by the uniqueness of the solution and Lemma \ref{lem:stab:Heu}, one obtains
$\mathbb E \big[\|x^\delta(\tilde{t})  - w^\dagger \|^2 \big] \to 0 $ as $T \to \infty$.
     Moreover, for the second case, i.e. $\lim_{T \to \infty } \tilde{t}^*_T \to \infty$, the proof is analogous to Theorem \ref{th:main;SDP} and is omitted for brevity. 
\end{proof}


\section{Numerical Experiments}\label{sec:num}

\noindent This section presents the numerical experiments to illustrate the performance of \texttt{D-STAR} method under the stopping rules given in Rule \ref{rule:MDP} and \ref{rule:HR}. We first examine a linear inverse problem governed by the Radon transform, then a nonlinear inverse problem involving the Schlieren operator. In both cases, \texttt{D-STAR} is compared with the existing methods \texttt{DDIRLI}~\eqref{DDLI} and \texttt{SAR} \eqref{SAR}, which can be view as special cases of \eqref{D-STAR}.
In \texttt{D-STAR}, both the data-driven drift and the diffusion terms are included. In contrast, \texttt{SAR} is obtained by setting \(\lambda(t)=0\), while \texttt{DDIRLI} for discrete setting is obtained by setting \(h(t)=0\).
Since \texttt{D-STAR} and \texttt{SAR} are stochastic methods, multiple sample paths can be generated. Thus, to ensure a fair comparison  with \texttt{DDIRLI}, we use a single sample path. On the other hand, when comparing \texttt{D-STAR} with \texttt{SAR}, we generate 24 sample paths for each method throughout this section. 

We now turn to the numerical implementation of \texttt{D-STAR} method \eqref{D-STAR}. Since it is formulated as a stochastic differential equation in the artificial time $t$, a suitable time discretization is required. Although several numerical schemes are available for approximation, see ~\cite{jentzen2009numerical,lord2014introduction}. But for simplicity, we consider the following Euler method
\begin{align}
    w_{k+1}^\delta = w_k^\delta + \Delta t \big( \mathcal{F}'(w_k^\delta)^* (v^\delta-\mathcal{F}(w_k^\delta)) + \lambda_k^\delta \hat{A}'(w_k^\delta)^* (v^\delta-\hat{A}(w_k^\delta)) \big) +g^\delta (t_k,w_k^\delta) \Delta B_k, \quad k \in \mathbb N, \label{dis:D_STAR}
\end{align}
where $\Delta t$ is the size of uniform time step and $\Delta B_k = \sqrt{\Delta t}\zeta_k$ with $\{\zeta_k\}_{k \geq 0}$ being a sequence of independent standard Gaussian random variables. 
Moreover, to obtain the noisy data $v^\delta$, we introduce a Gaussian noise $\theta$ to the exact data $v$ with a noise level $\delta$. Specifically, 
\(
    v^\delta = v + \delta_{rel} \|v\| \frac{\theta}{\|\theta\|},
\)
where $\delta_{rel}=\|v^\delta-v\|/\|v\|$ denotes the relative noise level, while $\delta=\delta_{rel} \|v\|$ is the corresponding noise level. 

Throughout the numerical experiments, $k^*$ and $\hat{k}^*$ denote the stopping indices of Rule \ref{rule:MDP} and Rule \ref{rule:HR}, respectively, while their corresponding relative errors at the stopping indices are given by $e^*= \|w^\delta_{k^*} -w^\dagger\|/ \|w^\dagger\|$ and $\hat{e}^*= \|w^\delta_{\hat{k}^*} -w^\dagger\|/ \|w^\dagger\|$.

\subsection{X-ray Computed Tomography(CT)}
X-ray CT is an important medical imaging technique which is used for detecting tumors, bone fractures, and other injuries, see ~\cite{quinto2006introduction} for further details. 
In the discrete setting, it is modeled by a linear system
\(
    Rw= v^\delta,
\)
where $R \in \mathbb R^{n \times m}$ denotes discretized Radon transform matrix, $w \in \mathbb R^{n}$ represents the unknown two-dimensional image, whose pixels are lexicographically arranged using row-major or column-major ordering and $v^\delta \in \mathbb R^{m}$ denotes corresponding noisy sinogram.

In the numerical experiment, we consider
 $180$ parallel-beams with equi-spaced projection angles within $[0,\pi)$. The unknown true image is discretized on a uniform $64\times64$ pixel grid, giving $n=4096$, while each projection has $182$
detector bins, resulting in a sinogram of size $182\times180$, giving $m=32760$, which leads to an underdetermined ill-posed system. The data-driven operator $\hat{A}$ is constructed using a training dataset of $64 \times 64$ pixel images of handwritten digits from the MNIST database~\cite{lecun1998gradient}, following the procedure described in~\cite{aspri2020data}.
Specifically, we utilize $50,600,$ and $2000$ grayscale images for training in Test 1(a), 1(b) and 1(c), respectively.
In {Test 1(a)}, the target image is selected from the training set, whereas {Test 1(b)} uses an image from the validation set. Finally, {Test 1(c)}, reconstructs the Shepp–Logan phantom, which is completely different from the MNIST dataset. For the Shepp--Logan phantom, we additionally run \texttt{SAR} and \texttt{D-STAR} to determine  uncertainty quantification of the solution. 

Starting with an initial value $\tilde{w}=0$ and the maximum iterations as $600$, we consider two distinct parameter configurations. 
Under Rule \ref{rule:MDP}, we define $h(t)=\tau/(1+t)$ and $\tau=1.3$. The regularization parameter $\lambda_k$ is set as $6.6062 \times 10^{-8} \|v^\delta-\mathcal{F}(w_k^\delta)\|^2$, $5.8275 \times 10^{-8} \|v^\delta-\mathcal{F}(w_k^\delta)\|^2$, and  $1.4883 \times 10^{-7} \|v^\delta-\mathcal{F}(w_k^\delta)\|^2$, for Tests 1(a), 1(b), and 1(c), respectively. The resulting reconstructed solutions obtained via \texttt{DDIRLI}, \texttt{SAR}, and \texttt{D-STAR} methods, alongside their corresponding relative error and residual graphs, are illustrated in Figures~\ref{fig:MDP_CT_rec}-~\ref{MDP_CT_residual}. Furthermore, for Rule \ref{rule:HR}, we set
$h(t)=1/(1+t)$ and $a=10$. The regularization parameter $\lambda_k$ is set as $1.2068 \times 10^{-5} \|v^\delta-\mathcal{F}(w_k^\delta)\|^2$, $9.2356 \times 10^{-5} \|v^\delta-\mathcal{F}(w_k^\delta)\|^2$, and  $8.8435 \times 10^{-5} \|v^\delta-\mathcal{F}(w_k^\delta)\|^2$, for Tests 1(a), 1(b), and 1(c), respectively. Figures ~\ref{HR_CT_rec}-~\ref{HR_CT_res}, represents the reconstructed solutions obtained via all three methods for all three tests. Moreover, a detailed comparison of relative errors and iterations under both rules with different choices of $\delta$, $\tau$, and $a$, respectively, is given in Table~\ref{tab:dp-hr-summary}.

\begin{figure}[htbp]
  \centering
  \includegraphics[width=0.7\textwidth]{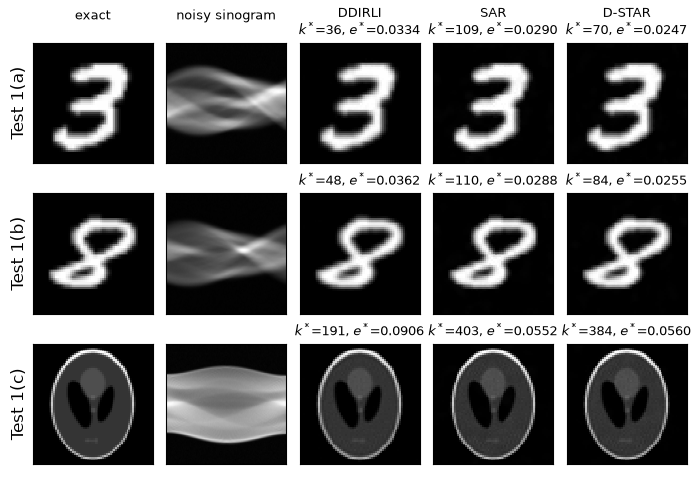}
  \vspace{-0.5cm}
  \caption{\scriptsize Reconstructed solutions of X-ray CT under Rule \ref{rule:MDP} with $\delta=1\%$ and $\tau=1.3$ for Tests 1(a), 1(b) and 1(c).}
  \label{fig:MDP_CT_rec}
\end{figure}

\begin{figure}[htbp]
  \centering
  \includegraphics[width=0.7\textwidth]{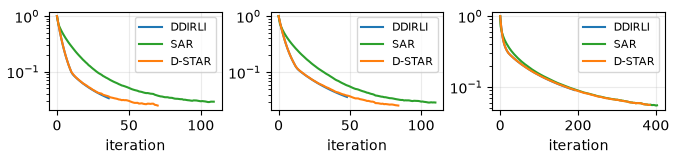}
  \vspace{-0.3cm}
  \caption{\scriptsize Relative error of X-ray CT under Rule \ref{rule:MDP} with $\delta=1\%$ and $\tau=1.3$ for Tests 1(a), 1(b) and 1(c), left to right.}
  \label{MDP_CT_error}
\end{figure}

\begin{figure}[htbp]
  \centering
  \includegraphics[width=0.7\textwidth]{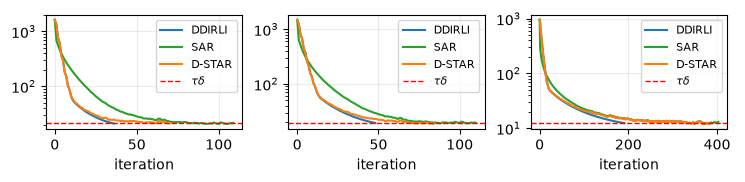}
  \vspace{-0.3cm}
  \caption
 {\scriptsize Residual of X-ray CT under Rule \ref{rule:MDP} with $\delta=1\%$ and $\tau=1.3$ for Tests 1(a), 1(b) and 1(c), left to right.
}
  \label{MDP_CT_residual}
\end{figure}
 
\begin{figure}[htbp]
  \centering
  \includegraphics[width=0.7\textwidth]{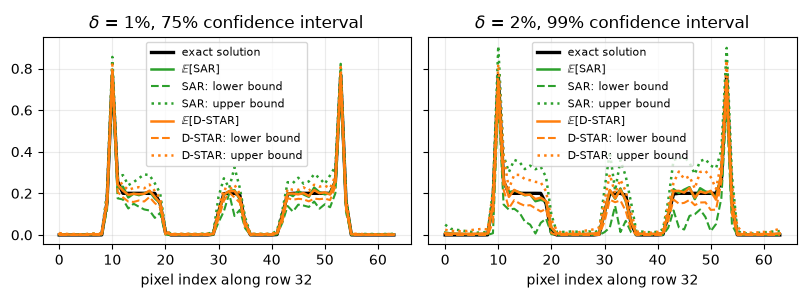}
  \vspace{-0.3cm}
  \caption{\scriptsize Uncertainty quantification of Shepp--Logan phantom for X-ray CT under Rule \ref{rule:MDP} with $\tau=1.3$ and $24$ sample paths.}
  \label{MDP_CT_UQ}
\end{figure}

We observe from Figure \ref{fig:MDP_CT_rec} that \texttt{D-STAR} method under Rule \ref{rule:MDP} provides better reconstruction accuracy compared to \texttt{DDIRLI} and \texttt{SAR} in Tests 1(a) and 1(b). Furthermore, it converges in fewer iterations than \texttt{SAR}, though it requires more iterations than \texttt{DDIRLI} due to the inclusion of an extra diffusion term. For Test 1(c), \texttt{D-STAR} achieves a significantly smaller error than \texttt{DDIRLI} (at cost of almost twice iterations than \texttt{DDIRLI}), while performing almost similar to \texttt{SAR} in reconstruction quality (\texttt{D-STAR} is requiring less iterations than \texttt{SAR}).
Figure \ref{HR_CT_rec} shows that under Rule \ref{rule:HR}, \texttt{D-STAR} and \texttt{DDIRLI} have almost same accuracy for Tests 1(a) and 1(b), although \texttt{D-STAR} is requiring more iterations than \texttt{DDIRLI}. Both methods achieve smaller errors with fewer iterations than \texttt{SAR}. In Test 1(c), \texttt{D-STAR} and \texttt{DDIRLI} perform in same way, whereas \texttt{SAR} reduces the error at the cost of more iterations.
Figures \ref{MDP_CT_error}, \ref{HR_CT_err} and \ref{MDP_CT_residual}, \ref{HR_CT_res} present the relative error and residual curves of \texttt{DDIRLI}, \texttt{SAR}, and \texttt{D-STAR} for a single sample path across Tests 1(a), 1(b), and 1(c), under Rules \ref{rule:MDP} and \ref{rule:HR}, respectively. These graphs demonstrate that the relative error and residuals decrease most slowly for \texttt{SAR}, followed by \texttt{D-STAR}, and most rapidly for \texttt{DDIRLI} under both rules. Furthermore, uncertainty quantification of the Shepp-Logan phantom under Rules \ref{rule:MDP} and \ref{rule:HR} are given in Figures \ref{MDP_CT_UQ} and \ref{HR_CT_UQ.png}, respectively. The band-width and iterations required for \texttt{SAR} and \texttt{D-STAR}  with different noise level and confidence interval are given in Table \ref{tab:sar-dstar-uq}. Across both settings, \texttt{D-STAR} yields sharper bands at substantially smaller iterations (see Table \ref{tab:sar-dstar-uq}).


\begin{figure}[htbp]
\small
  \centering
  \includegraphics[width=0.7\textwidth]{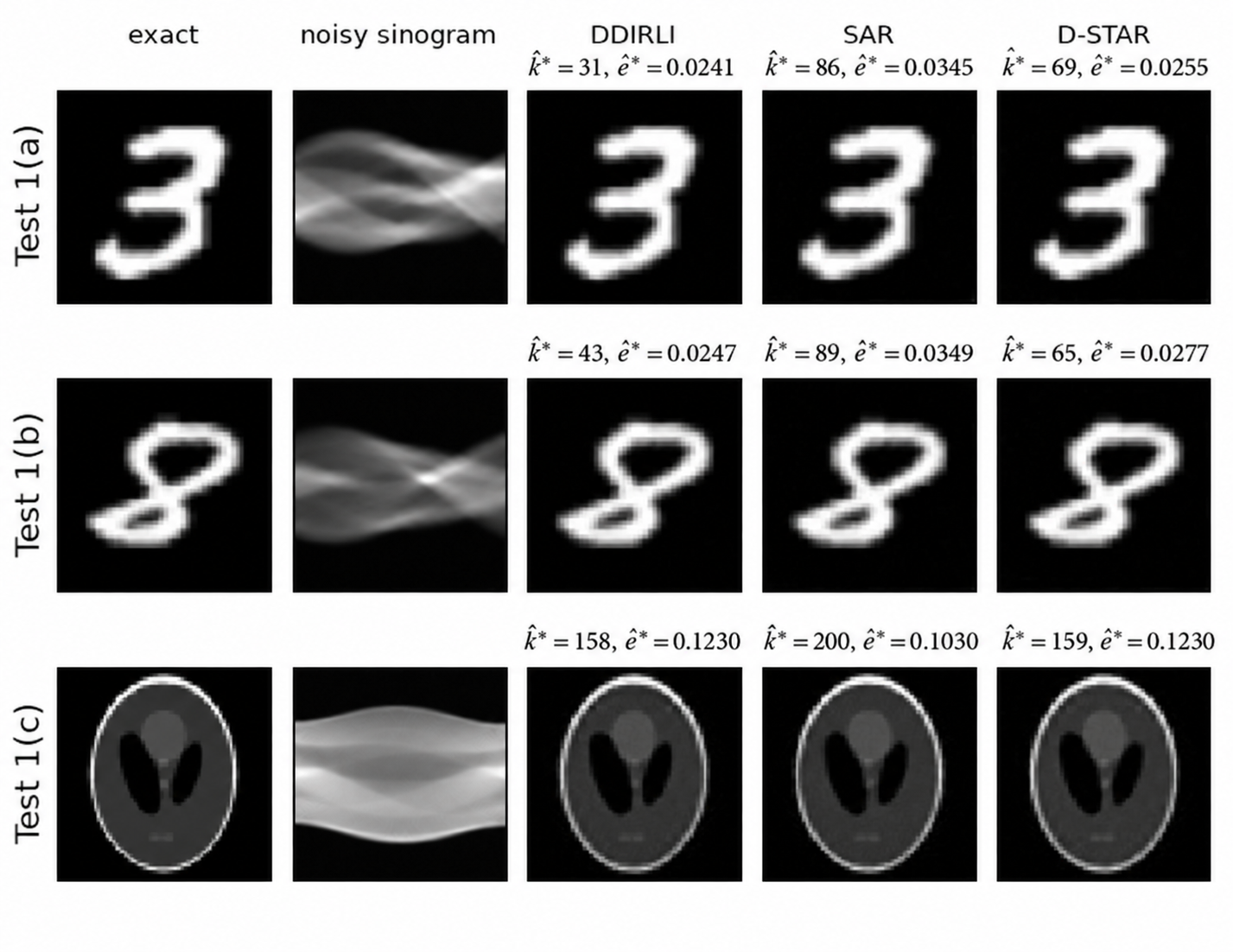}
  \vspace{-0.5cm}
 \caption{\scriptsize Reconstructed solutions of X-ray CT under Rule \ref{rule:HR} with $\delta=1\%$ and $a=10$ for Tests 1(a), 1(b) and 1(c).}
  \label{HR_CT_rec}
\end{figure}
 
\begin{figure}[htbp]
  \centering
  \includegraphics[width=0.7\textwidth]{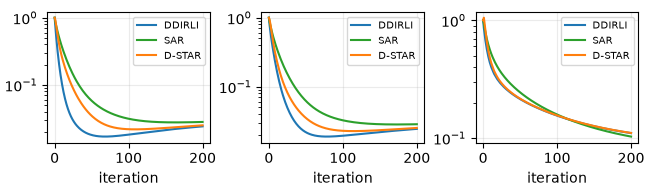}
  \vspace{-0.3cm}
  \caption{\scriptsize Relative error of X-ray CT under Rule \ref{rule:HR} with $\delta=1\%$ and $a=10$ for Tests 1(a), 1(b) and 1(c), left to right.}
  \label{HR_CT_err}
\end{figure}

\begin{figure}[htbp]
  \centering
  \includegraphics[width=0.7\textwidth]{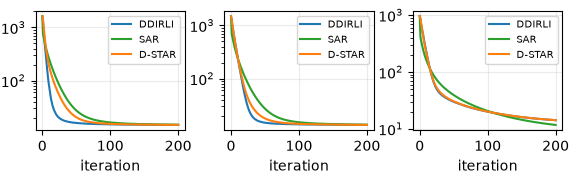}
  \vspace{-0.2cm}
  \caption
 {\scriptsize Residual of X-ray CT under Rule \ref{rule:HR} with $\delta=1\%$ and $a=10$ for Tests 1(a), 1(b) and 1(c), left to right.
}
  \label{HR_CT_res}
\end{figure}

\begin{figure}[htbp]
\small
  \centering
  \includegraphics[width=0.55\textwidth]{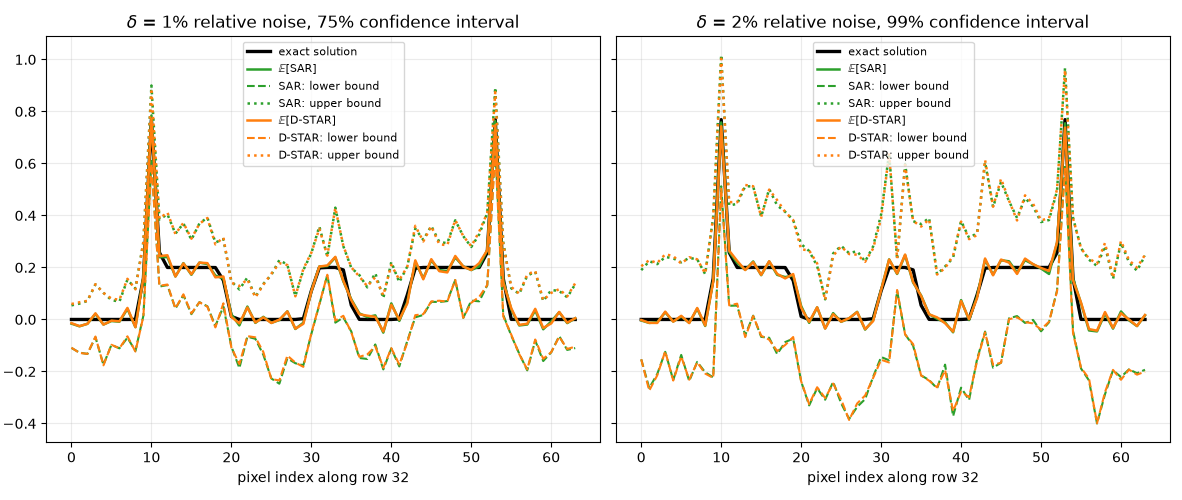}
  \vspace{-0.3cm}
  \caption{\scriptsize Uncertainty quantification of Shepp--Logan phantom for X-ray CT under Rule \ref{rule:HR} with $a=10$ and $24$ sample paths.}
  \label{HR_CT_UQ.png}
\end{figure}


\begin{table}[htbp]
  \centering
  \begin{tabular}{cclcccc}
    \toprule
    & & & \multicolumn{2}{c}{Rule \ref{rule:MDP}} & \multicolumn{2}{c}{Rule \ref{rule:HR}} \\
    \cmidrule(lr){4-5} \cmidrule(lr){6-7}
    Noise & Confidence interval & Method
      & Band width & $k^*$
      & Band width & $\hat{k}^*$ \\
    \midrule
    \multirow{2}{*}{$1\%$} & \multirow{2}{*}{$75\%$}
      & \texttt{SAR}    & $0.0527$ & $1076$ & $0.2365$ & $600$ \\
    & & \texttt{D-STAR} & $0.0307$ & $613$  & $0.2366$ & $573$ \\
    \midrule
    \multirow{2}{*}{$2\%$} & \multirow{2}{*}{$99\%$}
      & \texttt{SAR}    & $0.1183$ & $562$  & $0.4414$ & $451$ \\
    & & \texttt{D-STAR} & $0.0749$ & $321$  & $0.4484$ & $408$ \\
    \bottomrule
  \end{tabular}
  \caption{\scriptsize Uncertainty quantification of Shepp--Logan phantom for X-ray CT using \texttt{SAR} and \texttt{D-STAR} for $24$ sample paths under Rule \ref{rule:MDP} with $\tau=1.3$ and Rule \ref{rule:HR} with $a=10$.}
  \label{tab:sar-dstar-uq}
\end{table}


\begin{table}[htbp]
\small
\setlength{\aboverulesep}{0pt}
\setlength{\belowrulesep}{0pt}
\renewcommand{\arraystretch}{1.2}
\resizebox{\textwidth}{!}{%
\begin{tabular}{l|l|rcrcrcr|rcrcrcr}
\toprule
 & & \multicolumn{7}{c|}{Modified discrepancy principle} & \multicolumn{7}{c}{Heuristic rule} \\
\cmidrule(lr){3-9}\cmidrule(lr){10-16}
 & & & \multicolumn{2}{c}{$1\%$} & \multicolumn{2}{c}{$2\%$} & \multicolumn{2}{c|}{$3\%$}
 & & \multicolumn{2}{c}{$1\%$} & \multicolumn{2}{c}{$2\%$} & \multicolumn{2}{c}{$3\%$} \\
\cmidrule(lr){4-5}\cmidrule(lr){6-7}\cmidrule(lr){8-9}
\cmidrule(lr){11-12}\cmidrule(lr){13-14}\cmidrule(lr){15-16}
Test & Method & $\tau$ & $e^*$ & $k^*$ & $e^*$ & $k^*$ & $e^*$ & $k^*$
 & $a$ & $\hat{e}^*$ & $\hat{k}^*$ & $\hat{e}^*$ & $\hat{k}^*$ & $\hat{e}^*$ & $\hat{k}^*$ \\
\midrule
\multirow{9}{*}{1(a)} & \multirow{3}{*}{DDIRLI}
   & 1.3 & 0.0334 & 36 & 0.0547 & 21 & 0.0719 & 15 & 1   & 0.0274 & 27 & 0.0420 & 19 & 0.0496 & 17 \\
 & & 1.5 & 0.0408 & 29 & 0.0681 & 16 & 0.0901 & 11 & 10  & 0.0241 & 31 & 0.0375 & 21 & 0.0436 & 19 \\
 & & 2.0 & 0.0543 & 21 & 0.0898 & 11 & 0.0987 & 10 & 100 & 0.0185 & 47 & 0.0269 & 32 & 0.0333 & 27 \\
\cmidrule(lr){2-16}
 & \multirow{3}{*}{SAR}
   & 1.3 & 0.0290 & 109 & 0.0480 & 70 & 0.0646 & 61 & 1   & 0.0358 & 82  & 0.0534 & 58 & 0.0694 & 47 \\
 & & 1.5 & 0.0391 & 69  & 0.0659 & 47 & 0.0870 & 37 & 10  & 0.0345 & 86  & 0.0520 & 60 & 0.0670 & 49 \\
 & & 2.0 & 0.0563 & 49  & 0.0914 & 33 & 0.1207 & 27 & 100 & 0.0305 & 106 & 0.0448 & 77 & 0.0569 & 64 \\
\cmidrule(lr){2-16}
 & \multirow{3}{*}{D-STAR}
   & 1.3 & 0.0247 & 70 & 0.0421 & 37 & 0.0607 & 24 & 1   & 0.0260 & 67 & 0.0415 & 47 & 0.0572 & 37 \\
 & & 1.5 & 0.0336 & 42 & 0.0576 & 23 & 0.0800 & 15 & 10  & 0.0255 & 69 & 0.0394 & 50 & 0.0531 & 40 \\
 & & 2.0 & 0.0459 & 27 & 0.0830 & 14 & 0.1153 & 10 & 100 & 0.0228 & 85 & 0.0346 & 64 & 0.0448 & 54 \\
\midrule
\multirow{9}{*}{1(b)} & \multirow{3}{*}{DDIRLI}
   & 1.3 & 0.0362 & 48 & 0.0587 & 30 & 0.0810 & 21 & 1   & 0.0259 & 41 & 0.0388 & 31 & 0.0487 & 27 \\
 & & 1.5 & 0.0438 & 40 & 0.0749 & 23 & 0.0999 & 16 & 10  & 0.0247 & 43 & 0.0364 & 33 & 0.0453 & 29 \\
 & & 2.0 & 0.0606 & 29 & 0.1046 & 15 & 0.1266 & 12 & 100 & 0.0208 & 54 & 0.0307 & 41 & 0.0386 & 37 \\
\cmidrule(lr){2-16}
 & \multirow{3}{*}{SAR}
   & 1.3 & 0.0288 & 110 & 0.0456 & 75 & 0.0580 & 68 & 1   & 0.0354 & 87  & 0.0523 & 62 & 0.0686 & 50 \\
 & & 1.5 & 0.0375 & 72  & 0.0635 & 51 & 0.0874 & 42 & 10  & 0.0349 & 89  & 0.0504 & 65 & 0.0652 & 53 \\
 & & 2.0 & 0.0529 & 54  & 0.0935 & 37 & 0.1295 & 29 & 100 & 0.0313 & 108 & 0.0448 & 80 & 0.0564 & 68 \\
\cmidrule(lr){2-16}
 & \multirow{3}{*}{D-STAR}
   & 1.3 & 0.0255 & 84 & 0.0459 & 50 & 0.0624 & 33 & 1   & 0.0287 & 62 & 0.0462 & 42 & 0.0626 & 33 \\
 & & 1.5 & 0.0351 & 56 & 0.0614 & 32 & 0.0861 & 22 & 10  & 0.0277 & 65 & 0.0435 & 45 & 0.0578 & 36 \\
 & & 2.0 & 0.0524 & 36 & 0.0901 & 20 & 0.1321 & 13 & 100 & 0.0243 & 81 & 0.0365 & 60 & 0.0472 & 50 \\
\midrule
\multirow{9}{*}{1(c)} & \multirow{3}{*}{DDIRLI}
   & 1.3 & 0.0906 & 191 & 0.1447 & 104 & 0.1900 & 67 & 1   & 0.1275 & 147 & 0.2293 & 43  & 0.2561 & 33 \\
 & & 1.5 & 0.1073 & 154 & 0.1729 & 78  & 0.2283 & 46 & 10  & 0.1230 & 158 & 0.1926 & 65  & 0.2351 & 41 \\
 & & 2.0 & 0.1418 & 105 & 0.2298 & 45  & 0.2986 & 22 & 100 & 0.1078 & 211 & 0.1355 & 141 & 0.1647 & 98 \\
\cmidrule(lr){2-16}
 & \multirow{3}{*}{SAR}
   & 1.3 & 0.0552 & 403 & 0.0849 & 264 & 0.1152 & 194 & 1   & 0.0947 & 236 & 0.1406 & 126 & 0.1882 & 80  \\
 & & 1.5 & 0.0761 & 246 & 0.1199 & 151 & 0.1607 & 109 & 10  & 0.0936 & 200 & 0.1358 & 134 & 0.1771 & 89  \\
 & & 2.0 & 0.1090 & 161 & 0.1753 & 89  & 0.2349 & 60  & 100 & 0.0879 & 280 & 0.1180 & 176 & 0.1452 & 130 \\
\cmidrule(lr){2-16}
 & \multirow{3}{*}{D-STAR}
   & 1.3 & 0.0560 & 384 & 0.0925 & 221 & 0.1254 & 151 & 1   & 0.1271 & 149 & 0.2053 & 57  & 0.2429 & 39  \\
 & & 1.5 & 0.0785 & 234 & 0.1269 & 136 & 0.1723 & 84  & 10  & 0.1230 & 159 & 0.1839 & 73  & 0.2249 & 47  \\
 & & 2.0 & 0.1108 & 150 & 0.1879 & 70  & 0.2597 & 35  & 100 & 0.1081 & 211 & 0.1351 & 143 & 0.1640 & 100 \\
\bottomrule
\end{tabular}%
}
\caption{\scriptsize Numerical comparison of \texttt{DDIRLI}, \texttt{SAR}, and \texttt{D-STAR} for X-ray CT under Rule \ref{rule:MDP} and Rule \ref{rule:HR} for Tests 1(a), 1(b), and 1(c) at $\delta=1\%$, $2\%$, and $3\%$.}
\label{tab:dp-hr-summary}
\end{table}

Furthermore, from Table \ref{tab:dp-hr-summary}, one can observe that under Rule \ref{rule:MDP}, the relative error increases, while the number of iterations decreases across all three schemes and tests as both $\tau$ and $\delta$ increase, respectively. Further, under Rule \ref{rule:HR}, for any fixed $\delta$, the error decreases and iterations increases as $a$ increases for all three schemes and tests. However, for any given value of $a$, as $\delta$ increases, the relative error is increasing while number of iterations are decreasing across all three schemes and tests.

\subsection{Schlieren Tomography}
We investigate a Schlieren tomography problem in which the measured light-intensity pattern is proportional to  square of line integral of the pressure along the optical path.
A detailed description of the Schlieren tomography can be found in \cite{haltmeier2007kaczmarz,pitts1994tomographic}.

For $j \in 0,\ldots,l-1$, let $\nu_j = (\cos \theta_j, \sin \theta_j) \in S^1$ be the sets of recording directions, where $\theta_j \in [0,\pi)$ and $S^1$ represents the surface of the unit disc in $\mathbb R^2$. Then, we want to recover a function $w:\mathcal{D} \to \mathbb R$, where $\mathcal{D}=\{w \in \mathbb R^2: |w|<1\}$, from the following system of equations
\(
\mathcal{F}_j (w) = v^\delta, \quad j \in \{0,\ldots,l-1 \},
\)
where the Schlieren transform is given by $ \mathcal{F}_j = R_j^2$ with $R_j: C^\infty_0(\mathcal{D}) \to C^\infty_0([-1,1])$ denotes the Radon transform in the direction of $\nu_j$. 
According to \cite[Theorem 4.1]{haltmeier2007kaczmarz}, for each $j\in{0,\ldots,l-1}$, the operator
$\mathcal{F}_j$ is continuous and Fr\'echet differentiable from $H_0^1(\mathcal{D}) \to L^2([-1,1])$ with derivative given by $ \mathcal{F}'_i(w)h = 2R_i(w)R_i(h),$
where $H_0^1(\mathcal{D})$ denotes the Sobolev space of square-integrable functions on $\mathcal{D}$ with square-integrable weak gradients and vanishing trace on $\partial \mathcal{D}$. Furthermore, as established in \cite[Theorem 4.2]{haltmeier2007kaczmarz}, the adjoint operator $\mathcal{F}'_j(w)^*:L^2([-1,1]) \to H_0^1(\mathcal{D})$ is given by $g \to \mathcal{F}_j'(w)^*(g) = f(g)$, where $f(g)$ is an unique solution of $(I-\Delta)f(g) = 2R_j^*(R_j(w)g)$. Here  $I$ denotes the identity operator and $\Delta$ represents the Laplace operator on $H_0^1(\mathcal{D})$.

We mainly perform three numerical tests: 2(a), 2(b), and 2(c). The required $110 \times 110$ pixel training images for these tests are taken from Figures~14 and~15, respectively, as presented in \cite{aspri2020data}.
In {Test 2(a)} we show the reconstruction of an image only with region having positive values from training set. In {Test 2(b)} we consider an image only with a region having  negative values from training set. In {Test 2(c)} we reconstruct an image with a region with positive and negative values from training set.

For the numerical experiments under Rule \ref{rule:MDP}, we consider
 $180$ parallel-beams with equi-spaced projection angles within $[0,\pi)$. We set an initial value $\tilde{w}=0.01$, function $h(t)=\tau/(1+t)$, parameters $\tau=2$ and $\delta=5\%$. The regularization parameter $\lambda_k$ is set as $3.8360 \times 10^{-5} \|v^\delta-\mathcal{F}(w_k^\delta)\|^2$, $4.7230 \times 10^{-5} \|v^\delta-\mathcal{F}(w_k^\delta)\|^2$, and  $7.8244 \times 10^{-5} \|v^\delta-\mathcal{F}(w_k^\delta)\|^2$, for Tests 2(a), 2(b), and 2(c), respectively. The resulting reconstructed solutions obtained via the \texttt{DDIRLI}, \texttt{SAR}, and \texttt{D-STAR} methods, along with their corresponding relative error and residual curves, are presented in Figure \ref{fig:schlieren:rec}-\ref{fig:MDP_SCH_res}.


\begin{figure}[p]
  \centering
  \includegraphics[width=0.6\textwidth]{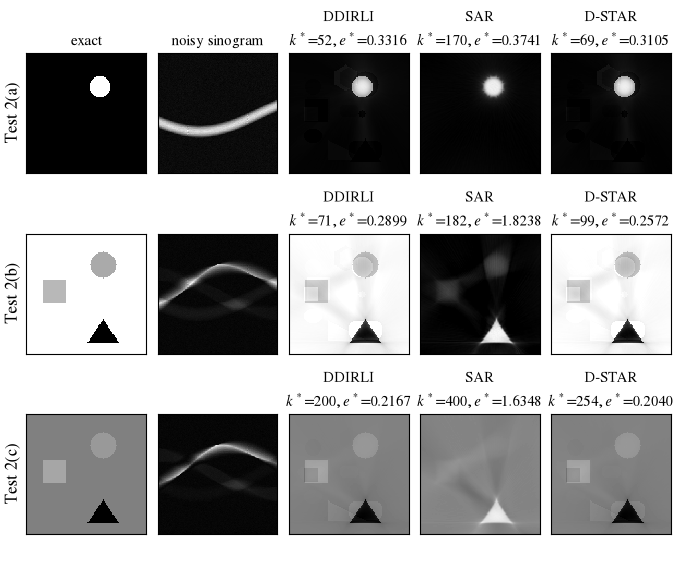}
 \vspace{-0.5cm}
  \caption{\scriptsize Reconstructed solutions of Schlieren tomography under Rule \ref{rule:MDP} with $\delta=5\%$ and $\tau=2$ for Tests 2(a), 2(b) and 2(c).}
  \label{fig:schlieren:rec}
\end{figure}
 
\begin{figure}[t]
  \centering
 \includegraphics[width=0.6\textwidth]{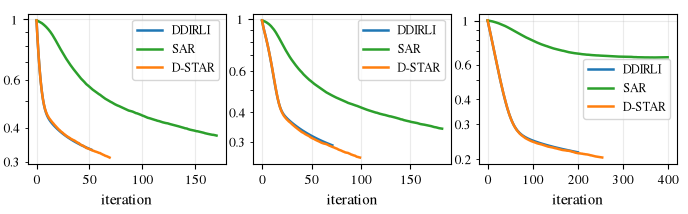}
  \vspace{-0.3cm}
  \caption{\scriptsize Relative error of Schlieren tomography under Rule \ref{rule:MDP} with $\delta=5\%$ and $\tau=2$ for Tests 2(a), 2(b) and 2(c), left to right.}
  \label{fig:schlieren:err}
\end{figure}

\begin{figure}[t]
    \centering
    \resizebox{0.6\textwidth}{3.3cm}{%
     \includegraphics{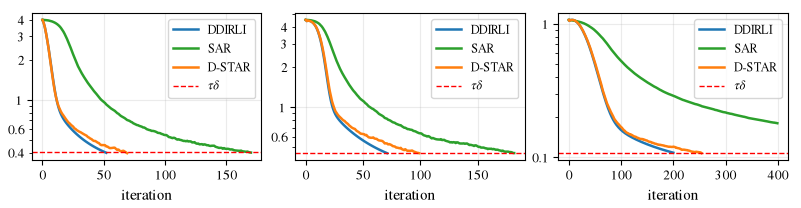}%
    }
    \vspace{-0.3cm}
    \caption{\scriptsize Residual of Schlieren tomography under Rule
    \ref{rule:MDP} with $\delta=5\%$ and $\tau=2$ for Tests 2(a), 2(b), and 2(c), from left to right.}
    \label{fig:MDP_SCH_res}
\end{figure}

\begin{figure}[t]
  \centering
\includegraphics[width=0.6\textwidth]{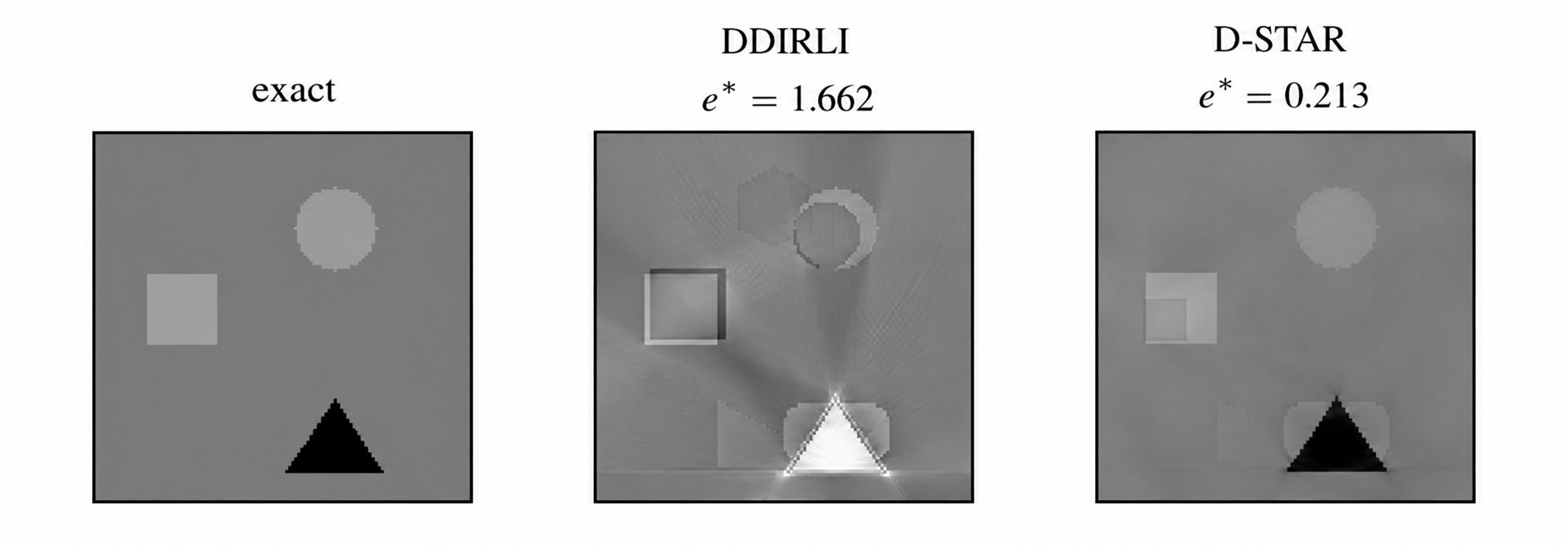}
 \vspace{-0.3cm}
  \caption
  {\scriptsize Exact solution (left), \texttt{DDIRLI} reconstruction (middle), and \texttt{D-STAR} reconstruction (right) under Rule \ref{rule:MDP} with $\delta=5\%$ and $\tau=2$.
}
  \label{fig:schlieren:mirror}
\end{figure}

\label{tab:schlieren}
\begin{table}[htbp]
  \centering
  \small
  \begin{tabular}{llccrc}
    \toprule
    Test & Method & Noise & $\tau$ & $k^*$ &
      $e^*$ \\
    \midrule
    2(a) & \texttt{DDIRLI} & 5\% & 2 &  52 & 0.3316 \\
         & \texttt{SAR}    &     &      & 170 & 0.3741 \\
         & \texttt{D-STAR} &     &      &  69 & {0.3105} \\
    \midrule
    2(b) & \texttt{DDIRLI} & 5\% & 2 &  71 & 0.2899 \\
         & \texttt{SAR}    &     &      & 182 & 1.8238 \\
         & \texttt{D-STAR} &     &      &  99 & {0.2572} \\
    \midrule
    2(c) & \texttt{DDIRLI} & 5\% & 2 & 200 & 0.2167 \\
         & \texttt{SAR}    &     &      & $400$ & 1.6348 \\
         & \texttt{D-STAR} &     &      & 254 & {0.2040} \\
    \bottomrule
  \end{tabular}
  \caption{\scriptsize Numerical comparison of \texttt{D-STAR} for Schlieren tomography under Rule \ref{rule:MDP} for Tests 2(a), 2(b) and 2(c)}
\end{table}

Figure \ref{fig:schlieren:rec} shows that \texttt{D-STAR} achieves better reconstruction accuracy than both \texttt{DDIRLI} and \texttt{SAR} in Tests 2(a), 2(b), and 2(c), while requiring fewer iterations than \texttt{SAR} but more than \texttt{DDIRLI}. 
Figures \ref{fig:schlieren:err} and \ref{fig:MDP_SCH_res} present the corresponding relative error and residual curves of \texttt{DDIRLI}, \texttt{SAR}, and \texttt{D-STAR} for a single path across all tests. Figure \ref{fig:schlieren:err} shows that error graph of \texttt{DDIRLI} and \texttt{D-STAR} are identical and decreasing more rapidly than \texttt{SAR} in all three Tests, while Figure \ref{fig:MDP_SCH_res} illustrates that residuals decrease most slowly for \texttt{SAR}, followed by \texttt{D-STAR}, and most rapidly for \texttt{DDIRLI}. Furthermore, Figure \ref{fig:schlieren:mirror} shows that \texttt{DDIRLI} gets trapped in a local minima, whereas the stochastic diffusion in \texttt{D-STAR} enables it to escape.


\section{Concluding remarks} \label{sec:con}
\noindent
In this work, we developed the Data-driven STochastic Asymptotic Regularization (\texttt{D-STAR}) method for solving both linear and nonlinear ill-posed inverse problems under known and unknown noise levels. The proposed method combines the mechanism of data-driven Iteratively Regularized Landweber Iteration (\texttt{IRLI}) with Stochastic Asymptotic Regularization (\texttt{SAR}). For known noise level, we employ a modified discrepancy principle, while a heuristic stopping rule is proposed for an unknown noise level. We considered two distinct stochastic diffusion terms in each case due to the definitions of stopping rules and mathematical challenges, and established the well-posedness of corresponding \texttt{D-STAR} method. Furthermore, we prove the finite termination of proposed method, convergence to the true solution for the exact data case, and the regularizing property of \texttt{D-STAR} method under both stopping rules. Numerical experiments involving X-ray CT and Schlieren tomography demonstrate the effectiveness of \texttt{D-STAR} method. We have given a detailed comparison of \texttt{D-STAR} with \texttt{DDIRLI} and \texttt{SAR}. The stochastic formulation enables \texttt{D-STAR} to quantify uncertainty in the computed solution and avoid local minima entrapment, whereas the data-driven component significantly reduces number of iterations compared to \texttt{SAR}.

The present analysis is formulated in a Hilbert space framework, while extending the proposed methodology to Banach space setting remains an interesting future work. Another possible direction is to derive explicit convergence rates for \texttt{D-STAR} method, following the ideas presented in \cite{zhang2024stochastic,zhou2025data}. It would also be interesting to develop a graph Laplacian based regularization method \cite{aleotti2026iterated, bajpai2026graph,bajpai2026convergence,bianchi2025data} in stochastic differential equation framework as such methods can effectively capture the structural information in image data and may provide a promising approach to regularization in continuous-time settings, which is currently not present in the literature.
\\

\noindent
\textbf{Data availability:} The authors will provide the datasets and source code supporting the findings of this study upon reasonable request.
\\

\noindent \textbf{Acknowledgment:} The authors used Claude as a coding support tool for debugging and verifying the code developed in the numerical experiments.
\\

\vspace{-0.4cm}






\bibliography{ref.bib}
\bibliographystyle{abbrv}

\end{document}